\documentclass[11pt]{article}

\usepackage[margin=1in]{geometry}
\usepackage{enumerate, amsmath, amssymb,latexsym}
\usepackage{amsthm, color, graphicx, hyperref, amscd}
\usepackage{xcolor}
\usepackage[normalem]{ulem}
\newcommand\nc{\newcommand}
\nc\DMO{\DeclareMathOperator}
\nc\ignore[1]{}     

\DMO{\Tr}{Tr}        \DMO{\N}{N}    \DMO{\ind}{ind}
\DMO{\tr}{Tr}                              \DMO{\PG}{{PG}}
\DMO{\AG}{{AG}}                        \DMO{\GL}{{GL}}
\DMO{\PGL}{{PGL}}                     \DMO{\PGaL}{P\Gamma L}
\DMO{\Gal}{Gal}                          \DMO{\GaL}{\Gamma L}

\nc{\uE}{{\textup E}}                              \nc{\uL}{{\textup L}}
\nc{\uS}{{\textup S}}                              \nc{\Zp}{{\mathbb Z}_p}
\nc{\cR}{{\mathcal R}}                            \nc{\cA}{{\mathcal A}}
\nc{\cB}{{\mathcal B}}                            \nc{\cT}{{\mathcal T}}
\nc{\cC}{{\mathcal C}}                            \nc{\cK}{{\mathcal K}}
\nc{\cN}{{\mathcal N}}                           \nc{\cM}{{\mathcal M}}
\nc{\cG}{{\mathcal G}}                           \nc{\ts}{{\theta_s}}
\nc{\cE}{{\mathcal E}}                            \nc{\cF}{{\mathcal F}}
\nc{\cP}{{\mathcal P}}                            \nc{\cH}{{\mathcal H}}
\nc{\cD}{{\mathcal D}}                           \nc{\cU}{{\mathcal U}}
\nc{\cL}{{\mathcal L}}                            \nc{\Lb}{{\mathbb L}}
\nc{\wa}{{\widehat{\alpha}}}                 \nc{\wg}{{\widehat{\gamma}}}
\nc{\cJ}{{\mathcal J}}                            \nc{\cQ}{{\mathcal Q}}
\nc{\cS}{{\mathcal S}}                           \nc{\cX}{{\mathcal X}}
\nc{\F}{{\mathbb F}}                             \nc{\Fq}{\F_q}
\nc{\Fqt}{\F_{q^t}}                               \nc{\K}{{\mathbb K}}
\nc{\GF}{\hbox{{\rm GF}}}                    \nc{\st}{{\cal S}_t}
\nc{\sett}[2]{\{#1\mid #2\}}
\nc{\so}{{\cal S}}                                  \nc{\sk}{{\cal S}_2}
\nc{\ad}{E}                                            \nc{\lmb}{\lambda}
\nc{\la}{\langle}                                     \nc{\ra}{\rangle}

\nc{\ZG}{\mathcal{Z}(\mathrm{\Gamma L})}
\nc{\G}{\mathrm{\Gamma L}}                \nc{\ep}{\epsilon}
\nc{\vv}{\mathbf v}                                \nc{\hs}{{\hat{\sigma}}}

\newtheorem{theorem}{Theorem}[section]

\newtheorem{lemma}[theorem]{Lemma}
\newtheorem{corollary}[theorem]{Corollary}
\newtheorem{definition}[theorem]{Definition}
\newtheorem{notation}[theorem]{Notation}
\newtheorem{proposition}[theorem]{Proposition}
\newtheorem{result}[theorem]{Result}
\newtheorem{example}[theorem]{Example}

\newtheorem{property}[theorem]{Property}

\newtheorem{remark}[theorem]{Remark}
\newtheorem{ass}[theorem]{Assumption}
\nc{\red}[2]{\textcolor[rgb]{1,0.1,0.1}{#1}}
\nc{\green}[2]{\textcolor[rgb]{0.2,0.9,0.1}{#1}}

\nc{\corr}[2]{\textcolor[rgb]{1,0.1,0.1}{#1}}

\newcommand{\zsst}[1]{%
    \bgroup
    \markoverwith{\textcolor{magenta}{\rule[0.5ex]{2pt}{1.2pt}}}%
    \ULon{#1}%
    \egroup
}

\makeatletter
\renewcommand*{\@fnsymbol}[1]{\@arabic{#1}}
\newcommand{\silentthanks}[1]{\stepcounter{footnote}\protected@xdef\@thanks{\@thanks\protect\footnotetext[\the\c@footnote]{#1}}}
\makeatother

\title{Two-coloring the points of a projective plane}
\author{Aart Blokhuis\thanks{Department of Mathematics and Computer Science,
   Eindhoven University of Technology, P.O.\ Box 513, 5600 MB Eindhoven,
   The Netherlands. \href{mailto:a.blokhuis@tue.nl}{a.blokhuis@tue.nl}}
 \and
 \'Ad\'am Mark\'o\thanks{Institute of Mathematics, ELTE E\"otv\"os Lor\'and
   University, P\'azm\'any P\'eter s\'et\'any 1/C, H-1117 Budapest, Hungary.
   }
 \and
 Tam\'as Sz\H onyi\textsuperscript{2,3,4,5}%
 \silentthanks{HUN-REN, Alfréd Rényi Institute of Mathematics, Re\'altanoda utca 13-15, 1053, Budapest, Hungary}
   \silentthanks{HUN-REN-ELTE Geometric and Algebraic Combinatorics Research Group,
   P\'azm\'any P\'eter s\'et\'any 1/C, H-1117 Budapest, Hungary.
   \href{mailto:tamas.szonyi@ttk.elte.hu}{tamas.szonyi@ttk.elte.hu}}%
   \silentthanks{University of Primorska FAMNIT, Glagolja\v ska ulica 8,
   6000 Koper, Slovenia.}
 \and
Zsuzsa Weiner\textsuperscript{4,6}%
   \silentthanks{Prezi.com, Budapest, Hungary.
   \href{mailto:zsuzsa.weiner@gmail.com}{zsuzsa.weiner@gmail.com}}
}
\date{}

\begin{document}
	\maketitle

\begin{abstract}
Let us color the points of a projective plane of odd order $n$ with two colors, say red and blue. Erd\H os asked the question of determining the maximum number of balanced lines (that have the same number of red and blue points). The answer is $n^2$, and in the corresponding trivial coloring the unbalanced lines go through a point. In this paper, we consider the Galois plane $\PG(2,q)$ and determine the number of unbalanced lines for a non-trivial coloring. The problem can also be generalized to $(r,b)$-colored lines (see Definition \ref{def:rb-colored}) and in many cases we extend the results to the generalized problem. Our strongest result for Galois planes of prime order $p$ describes all 2-colorings with at most 
$\max \{3p+1,4p-22\}$ not $(r,b)$-colored lines (see Theorem \ref{thm:prime max}). Finally, in case of the generalized problem, we allow even more not $(r,b)$-colored lines if $r$ is small and the order is prime (for more details, see Theorem \ref{thm:mainprime}).
\end{abstract}

\section{Introduction}

Determining the chromatic number of a projective plane of order $n$ is an easy exercise. It is $3$ for the Fano plane and $2$ for $n\geq 3$. In spite of this, there are several interesting problems for colorings of projective planes. For example, one can impose extra conditions on color classes (one may require that they are arcs, or blocking sets, see Csima, F\"uredi \cite{CsF}, or \cite{BMPSz}), or can consider other variants of the chromatic number (such variants were introduced by Voloshin \cite{V}, and for projective planes the upper chromatic number was considered first by Bacs\'o and Tuza \cite{BT}, then in \cite{BHSz}). One may also consider balanced colorings in which the size of color classes differs by at most one (the balanced upper chromatic number was introduced by Araujo-Pardo, Kiss, Montejano \cite{A-PKM}, and considered later in \cite{BBMNSz}). There are probably many other interesting problems for coloring; for example, Alon and F\"uredi \cite{AF} consider colorings in which the distribution of colors is unique for the lines.

We will mostly be interested in 2-colorings and the colors we call red and blue. The classical result by Spencer \cite{S} about the discrepancy of projective planes gives a probabilistic proof of the existence of a 2-coloring so that for every line $\ell$ the difference of the number of red and blue points on $\ell$ is at most $K\sqrt n$ (where $K$ is an absolute constant and $n$ is the order of the plane). 

Probably motivated by the above result by Spencer, Erd\H os asked in the eighties the following problem:
{\em What is the maximum number of lines that have the same number of red and blue points?} Let us call such a line {\em balanced}. 

It is easy to see that there are at least $q+1$ unbalanced lines in $\PG(2,q)$. Indeed, as $q$ lines cannot cover all points, there exists a point through which only balanced lines pass. If this point is red, then the number of red points would be $(q^2+1)/2$. Similarly, on an unbalanced line there must be a point through which no other unbalanced line passes. Counting the number of red points on the lines through such a point yields a contradiction, showing that there are indeed at least $q+1$ unbalanced lines. A slight refinement of this simple argument also shows that when there are exactly $q+1$ unbalanced lines, then they must be concurrent. Moreover, half of these lines are entirely red, half are entirely blue (except the carrier of this pencil, which can be either color). This {\em trivial coloring} shows that the bound $q+1$ for the number of unbalanced lines is sharp, and essentially this is the only coloring with $q+1$ unbalanced lines.

We generalize the notion of balanced lines by introducing {\em $(r,b)$-colored} lines.

\begin{definition}
\label{def:rb-colored}
Color the points of $\PG(2, q)$ with red and blue and fix an integer $0 \leq r \leq q+1$ and let $b = q+1-r$. 
The line $\ell$ in $\PG(2, q)$ is {\em $(r,b)$-colored}, if the number of red points on $\ell$ is $r$, while the number of blue points is $b$. 
\end{definition}

For the remainder of this paper, we refer to $(r,b)$-colored lines as \emph{balanced lines} and all other lines as \emph{unbalanced lines}. Note that when $r = b = (q+1)/2$, this coincides with Erd\H os' original notion of balance. Also note that the role of the colors is symmetric, and so we may assume that $r\le b$. 

\begin{definition}
\label{def: trivial}
We call the following coloring trivial: take a pencil, color $r$ lines of the pencil entirely red (except for their common carrier, which can be any color), and color the remaining lines entirely blue.     
\end{definition}

The question we consider is to find the maximum number of $(r,b)$-colored (balanced) lines. Originally, as discussed above, Erd\H os asked this question for $r=\frac{q+1}{2}$. The third author heard about the problem from Jenő Lehel \cite{LJ}. Extending the argument above, one can see that there are exactly two exceptional cases when the number of unbalanced lines is smaller than $q+1$: when $r=0$, that is, all points are blue (no unbalanced lines), and when $r=1$, that is, one line is completely red while all other points are blue (exactly one unbalanced line). In all other cases, there are at least $q+1$ unbalanced lines.

The corresponding coloring in the case of equality is again the trivial coloring defined above. 

We shall focus on a stability version of this generalized Erd\H os question: characterize the case(s) when the number of $(r,b)$-colored lines is close to its maximum.

Note that the case $r = 0$ is trivial. If we color all points blue, then all lines are balanced.  In this case, the stability version of the problem given above would ask for the number of lines intersecting a point set (see also \cite{BB}).

Note that a balanced line contains $r \bmod p$ red points. Therefore, if we consider the set of red points $R$, then only the unbalanced lines can be not $(r \bmod p)$-secants of it (though not necessarily). Hence, if there are $\delta$ unbalanced lines, then the number of not $(r \bmod p)$-secants of $R$ is at most $\delta$. This allows us to apply the results of \cite{SzW} for sets having few not $(r \bmod p)$-secants.

In the present paper we shall work in the Desarguesian plane $\PG (2,q)$, $q = p^h$,  $p$ prime,  so for the order of the plane we will use $q$ and not $n$ as above. The corresponding affine plane $\AG(2, q)$ will occasionally be identified with $\F_{q^2}$ (see \cite{H}, Section 2.2). Our goal is to determine and classify 2-colorings that admit only a small number of unbalanced lines — in particular, to describe all such colorings when their number is at most $2q + 1$, and to analyze special cases when $q$ is prime.

\section{Outline of results and motivating examples}

Our main results provide complete classifications for colorings when there are few unbalanced lines. 


In Section \ref{sec: genq}, we deal with the general case $r\leq b$ in $\PG(2,q)$, $q=p^h$. Here we settle the problem under the additional assumption that there is a line for which the number of red points is not $r \pmod{p}$ (Assumption \ref{ass: non r mod p line}). This condition is needed when the unbalanced lines (i.e. not $(r,b)$-colored) do not form a dual blocking set (see Subsection \ref{subs: not dual bl set}). In this case we can describe the coloring completely if there are at most $2q+1$ unbalanced lines and $q>3$ (Theorem \ref{thm:not bl and non r mod p}). If, on the other hand,  the unbalanced lines form a dual blocking set (see Subsection \ref{subs: dual bl set}), then we describe the coloring completely if there are less than $2q+1$ lines (Theorem \ref{thm: unbalanced are bl set}). 

Section \ref{sec:orig} deals with the case of the original problem of Erd\H os ($r = b = (q+1)/2$). Using a counting argument (without Assumption \ref{ass: non r mod p line}), we prove that for a non-trivial coloring the number of unbalanced lines is at least $2q+1$ when $q>3$ (Theorem \ref{thm:erdos}). The argument also works for the nearby cases $|r-(q+1)/2|\leq (p-3)/2$: if $3\leq r\leq b$ and the unbalanced lines do not form a dual blocking set, then there are at least $2q+2$ unbalanced lines (Corollary \ref{cor:nearby}).

The next examples show that Assumption \ref{ass: non r mod p line} cannot be omitted: there exist colorings with fewer than 
$2q+1$ unbalanced lines which do not form a dual blocking set, and for which every line contains the same number of red points modulo $p$. Also one might think that in the Erd\H os case, the only possible example with $2q+1$ unbalanced lines is the trivial example with flipping the color of a point different from the carrier of the complete red lines. However, as Example \ref{ex: baer plus lines} shows with $t = (q-1)/2$,  this is not the case. 

\begin{example}
\label{ex:hyperoval}  
Let $q$ be even, and consider a set of $q+2$ lines in $\PG(2,q)$ such that no three of them are concurrent (a so-called \emph{dual hyperoval}). 
Color all points on these $q+2$ lines blue and all other points red.
Or color one of these $q+2$ lines blue, the remaining points on the other $q+1$ lines red, and all other points blue.

Since each point lies on either $0$ or $2$ of these $q+2$ lines, every line not belonging to the dual hyperoval contains exactly $r = q/2$ red points.
Thus these colorings have exactly $q+2$ unbalanced lines (namely, the lines of the dual hyperoval).
\end{example}

\begin{example}
\label{ex: hyperoval one point flipped}
In Example \ref{ex:hyperoval} change the color of a point on one of the lines of the dual hyperoval. This coloring will have exactly $2q+1$ unbalanced lines.
\end{example}

\begin{example}
\label{ex: baer plus lines} 
Let $q$ be a square, and let $B$ be a Baer subplane of $\PG(2,q)$. 
Choose a point $P \in B$, and let $t$ lines through $P$ be tangent to $B$ (i.e. meeting $B$ in $P$ only), where $1 \leq t \leq q-\sqrt q$. Color all points of $B \setminus \{P\}$ red, and also color red all points on these $t$ tangent lines except $P$. The color of $P$ can be either red or blue. All remaining points are colored blue.

A line tangent to $B$ and not passing through $P$ contains exactly $t+1$ red points. Any other line not passing through $P$ contains $\sqrt{q}+1+t$ red points, while the lines through $P$ contain either $0, 1, \sqrt q, \sqrt q + 1, q, q+1$ red points.

Let $r = t+1$ and $b = q - t$. Then:
\begin{itemize}
    \item if $t = \sqrt{q} - 1$ and $P$ is blue, then the coloring has exactly $2q - \sqrt{q}$ unbalanced lines;
    \item if $t = \sqrt{q}$ and $P$ is red, then the coloring has exactly $2q - \sqrt{q}$ unbalanced lines;
    \item if $t \neq \sqrt q -1, \sqrt{q}$, then the coloring has exactly $2q + 1$ unbalanced lines.
\end{itemize}
\end{example}

Section \ref{sec: prime} is devoted to the case $q=p$ prime. The non-$(r \bmod p)$ secants are typically just the unbalanced lines (except for $r = 1$), so it is not surprising that we have stronger results for $q = p$. First of all, there is an example due to Kiss and Somlai, who constructed a subset of size $p(p-1)/2$ which intersects each line with slope not $m = 0, 1$ in precisely $(p-1)/2$ points. If we color this set and the line at infinity red, then we get a non-trivial coloring in the Erd\H os case with $3p-2$ unbalanced lines.
In the prime case, $q = p \geq 19$, we can describe colorings with at most $\max\{3p+1, 4p-22\}$ unbalanced lines completely, see Theorem \ref{thm:prime max} in Subsection \ref{subs: st most max}. Finally, in Subsection \ref{subs: more unbalanced with restr}, we consider the general problem when $r$ is small in $\PG(2,p)$, $p$ prime. In this case the red points are the union of $r$ completely red lines (with some assumption on the number of points they intersect) and we may change the color of a small number of points (Theorem \ref{thm:mainprime}).

\section{Colorings with at most  \(2q + 1\) unbalanced lines in \(\PG (2,q)\)} 
\label{sec: genq}
In this section, we color the points of $\PG(2,q)$ red and blue, and we denote the set of red points by $R$, the set of blue points by $B$, and the set of unbalanced lines by $U$. 
Throughout this section, we will assume that  $|U| \leq  2q + 1$, unless stated otherwise and we will characterize the possible such colorings. We will distinguish between two cases: first, we consider the case where the unbalanced lines form a dual blocking set, and then we examine the case where there exists a point not covered by any of the unbalanced lines.

\subsection{Unbalanced lines form a dual blocking set}
\label{subs: dual bl set}
\begin{lemma}
\label{lem: balanced_line}
If $0< |U| \leq 2q + 1$ and $U$ covers all points of $\PG (2,q)$, then every balanced line contains a point through which there is a unique unbalanced line. \qed
\end{lemma}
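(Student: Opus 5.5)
Take a balanced line $\ell\notin U$ and count incidences of its points with the unbalanced lines.

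Every line of $U$ is different from $\ell$, so it meets $\ell$ in exactly one point. Hence the total number of incidences between the $q+1$ points of $\ell$ and the lines of $U$ is
\[
\sum_{P\in\ell} u(P) = |U| \le 2q+1,
\]
where $u(P)$ is the number of unbalanced lines through $P$. Since $U$ is a dual blocking set, it covers every point, so $u(P)\ge 1$ for all $P\in\ell$.

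Suppose, for contradiction, that no point of $\ell$ lies on a unique unbalanced line. Then $u(P)\ge 2$ for every $P\in\ell$, and
\[
\sum_{P\in\ell} u(P)\ge 2(q+1)=2q+2>2q+1\ge |U|.
\]
This contradicts the incidence count above. So some point $P\in\ell$ has $u(P)\le 1$, and combined with $u(P)\ge 1$ we get $u(P)=1$, as claimed.

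The hypothesis $0<|U|$ is not used in this argument. It only ensures that the covering assumption is not vacuous, since an empty $U$ cannot cover the points. There is no real obstacle here: the one point to check is that $\ell\notin U$, which is what guarantees that each unbalanced line meets $\ell$ in exactly one point.
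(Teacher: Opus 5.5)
Your proof is correct and is essentially the intended argument. The paper states this lemma without proof, but it uses the same count elsewhere (in the proof of Proposition~\ref{prop: not the same color}): each unbalanced line meets a balanced line $\ell$ in exactly one point, so if every point of $\ell$ lay on at least two unbalanced lines there would be at least $2(q+1)$ of them.
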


First, we consider the case when the points through which a unique unbalanced line passes are all the same color. We will need the following result.

\begin{result}[Tallini Scafati {\cite{tallini1}}, {\cite{tallini2}}] \label{res:(1,m)}
If $q$ is a power of a prime, then any point set in a projective plane of order $q$ intersecting each line in either $1$ or $n$ points, where $0 \leq n \leq q+1$ and $n \neq 1$, is one of the following: a unique point, a line, a Baer subplane, or a unital.
\end{result}

\begin{lemma}
\label{lem: a point, Baer}
Assume that unbalanced lines cover all points of $\PG (2,q)$ and the points through which a unique unbalanced line passes are all the same color, say red. Suppose also that all unbalanced lines contain the same number of red points and the total number of unbalanced lines is less than $q\sqrt{q} + 1$. Then either there is a unique blue point or the blue points form a Baer subplane.

\end{lemma}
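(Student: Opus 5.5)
The plan is to turn the colour condition into a combinatorial condition on the dual of $U$, and then apply Result~\ref{res:(1,m)} in the dual plane. Let $k$ be the common number of red points on the unbalanced lines. Since these lines are unbalanced, $k\neq r$, while every balanced line has exactly $r$ red points.

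\textbf{Step 1: a red point on a unique unbalanced line exists.} There must be some blue point. Otherwise every line has $q+1$ red points, so either all lines are balanced ($U=\emptyset$, which does not cover the points) or all lines are unbalanced ($|U|=q^2+q+1>q\sqrt q+1$). So some line contains a blue point. A balanced line $\ell$ contains a point on a unique unbalanced line by Lemma~\ref{lem: balanced_line}; this lemma applies since $|U|<q\sqrt q+1$, though the small cases $q\le 4$, where $q\sqrt q+1\le 2q+1$, should be checked separately. If instead every line were unbalanced we would again exceed the bound. Hence there is a point $P$ on exactly one unbalanced line, and by hypothesis $P$ is red.

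\textbf{Step 2: counting red points through each point.} Counting red points on the lines through $P$ (one unbalanced line, $q$ balanced lines) gives
\[
|R| = k + q(r-1).
\]
Now let $X$ be any point lying on exactly $v$ unbalanced lines.
\begin{itemize}
\item If $X$ is red, the same count gives $|R| = 1 + v(k-1) + (q+1-v)(r-1)$. Comparing with the formula above yields $(v-1)(k-r)=0$, so $v=1$. Thus every red point is on exactly one unbalanced line.
\item If $X$ is blue, then $|R| = vk + (q+1-v)r$. Comparing yields $(v-1)(r-k)=q$. So $v=1+q/(r-k)$ is the same number $u$ for every blue point, and $u\ge 2$ because $q\neq 0$.
\end{itemize}

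\textbf{Step 3: apply Result~\ref{res:(1,m)} dually.} By Step 2, every point of the plane lies on either $1$ or $u$ lines of $U$, with $u\neq 1$. In the dual plane, which is again $\PG(2,q)$, the set $U$ therefore meets every line in $1$ or $u$ points. By Result~\ref{res:(1,m)}, $U$ is dually a single point, a line, a Baer subplane or a unital.
\begin{itemize}
\item A single line would make every point lie on exactly one unbalanced line, so there would be no blue points, contradicting Step 1.
\item A dual unital has $q\sqrt q+1$ lines, which is excluded by hypothesis.
\item If $U$ is a pencil, the points on $u\ge 2$ lines of $U$ are just the carrier. Hence the blue points, which are exactly the points on $u$ lines, consist of a unique point.
\item If $U$ is a dual Baer subplane, the points on $\sqrt q+1$ of its lines form a Baer subplane. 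Hence the blue points form a Baer subplane.
\end{itemize}

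\textbf{Main obstacle.} The crux is Step 2: establishing that the number of unbalanced lines through a point depends only on its colour. This is exactly where the hypotheses that all unbalanced lines carry the same number $k$ of red points, and that $k\neq r$, are used. Once this is in place, the rest is bookkeeping together with the dual form of Result~\ref{res:(1,m)}.
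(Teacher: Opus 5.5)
Your Steps 2 and 3 follow the paper's argument: the number of unbalanced lines through a point depends only on its color, red points lie on exactly one, and Result~\ref{res:(1,m)} is applied in the dual plane.

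\textbf{The gap is Step 1.} Lemma~\ref{lem: balanced_line} requires $|U|\le 2q+1$, and $q\sqrt q+1>2q+1$ for every $q>4$. So you have the inequality backwards: for $q\le 4$ the lemma does apply, while for $q\ge 5$ and $2q+1<|U|<q\sqrt q+1$ it gives nothing. Without a point on a unique unbalanced line, your Step 2 only shows that red and blue points lie on constant numbers $v_R,v_B$ of lines of $U$ with $(v_R-v_B)(k-r)=q$. Both can be at least $2$, and then Result~\ref{res:(1,m)} does not apply.

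No other argument can fill this gap, because in that range the statement is false unless such a point exists. Let $q\ge 9$ be a square, and partition the points of $\PG(2,q)$ into $q-\sqrt q+1$ Baer subplanes (such partitions exist via a Singer cycle). Every line meets each subplane in $1$ or $\sqrt q+1$ points, so every line is a line of exactly one of them. Color two of them, $\mathcal B_1\cup\mathcal B_2$, red and everything else blue. Then:
\begin{enumerate}[(i)]
\item Lines of $\mathcal B_1$ or $\mathcal B_2$ carry $\sqrt q+2$ red points and all other lines carry $2$. So with $r=2$, $U$ consists of the $2(q+\sqrt q+1)<q\sqrt q+1$ lines of $\mathcal B_1$ and $\mathcal B_2$.
\item Every point lies on $2$ or $\sqrt q+2$ of these lines. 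Hence $U$ covers the plane and the color hypothesis holds vacuously.
\item The blue points form neither a single point nor a Baer subplane.
\end{enumerate}

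So the lemma must be read with the implicit hypothesis that some point lies on a unique unbalanced line. The paper's proof uses this tacitly, and it is available wherever the lemma is applied: via Lemma~\ref{lem: balanced_line} with $|U|<2q+1$ in Theorem~\ref{thm: unbalanced are bl set}, and via the points of $TP$ and $TQ$ in Proposition~\ref{prop: not the same color}. If you replace Step 1 by this hypothesis, your proof is correct, apart from two small slips:
\begin{enumerate}[(i)]
\item A single unbalanced line is excluded because points off it lie on no line of $U$, contradicting the covering hypothesis. It is not true that every point would then lie on exactly one unbalanced line.
\item The claim $u\ge 2$ needs the covering hypothesis $v\ge 1$, since $v=0$ with $k=r+q$ is compatible with $(v-1)(r-k)=q$.
\end{enumerate}
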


\begin{proof}
By assumption, the number of red points on a line can take on two possible values. Hence, the number of unbalanced lines through any two points of the same color must be the same. Since the points through which a unique unbalanced line passes are red, we conclude that exactly one unbalanced line passes through each red point. By Result \ref{res:(1,m)}, this means there is either only one unbalanced line, or $q+1$ concurrent ones, or they form a dual Baer subplane. Unbalanced lines form a dual blocking set and hence the first case cannot happen.
The lemma now follows from the fact that the points through which there passes a unique unbalanced line are exactly the red points.
\end{proof}

\begin{lemma}
\label{lem: line and a point}
Assume that all points through which a unique unbalanced line passes are the same color, say red. Suppose also that the unbalanced lines form a dual blocking set and there exist two unbalanced lines with different
number of red points on them. Assume that $|U| < 2q + 1$. Then, a line and a point disjoint from it are colored blue, and the rest of the points are red.
\end{lemma}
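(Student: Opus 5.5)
Let $\cS$ denote the set of points through which a unique unbalanced line passes; by hypothesis every point of $\cS$ is red. The plan is to use Lemma \ref{lem: balanced_line}, counting, and the two red-counts on unbalanced lines to pin down the structure.

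\emph{Step 1: the red counts.} Let $\ell_1,\ell_2\in U$ carry $r_1\neq r_2$ red points. Take $P\in\ell_1\cap\cS$; such a point exists because $\ell_1$ has $q+1$ points and the other at most $2q-1$ unbalanced lines meet it in at most $2q-1$ points, which we must show do not cover it. To make this work I would first show that every unbalanced line contains a point of $\cS$; this is the analogue of Lemma \ref{lem: balanced_line}, proved by counting incidences with $|U|\le 2q$. Every other line through $P$ is balanced, so counting red points through the red point $P$ gives
\[
|R| = 1 + (r_1-1) + q(r-1).
\]
Hence $r_1$ is determined by $|R|$, and therefore all unbalanced lines contain the same number of red points. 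This contradicts the existence of $\ell_2$. So it is impossible that every unbalanced line meets $\cS$, and there must be an unbalanced line $m$ all of whose points lie on at least two unbalanced lines. Then $m$ is met by at least $q+1$ other unbalanced lines, and, counting with multiplicity, at least $q+1$ distinct lines. Consequently $|U|\ge q+2$, and all of $U\setminus\{m\}$ except at most $q-1$ lines pass through distinct points of $m$.

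\emph{Step 2: structure.} Apply the counting argument of Step 1 to every unbalanced line meeting $\cS$: all such lines have the same red count $r_1$. The lines of $U$ not meeting $\cS$ are few. Because each of them needs $q+1$ covering lines and $|U|\le 2q$, I expect there to be just one such line, namely $m$, and that the remaining lines cover $m$ essentially once per point. Next, use the fact that through each point of $\cS$ only balanced lines pass apart from one unbalanced line. Counting the red points on lines through a point of $m$ should then show that the points off $m$ lying on exactly one unbalanced line behave like the red points of Lemma \ref{lem: a point, Baer}. This should force the lines of $U\setminus\{m\}$ to be concurrent at a point $Q\notin m$, giving $|U|=q+2$: the $q+1$ lines through $Q$, plus $m$. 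The $q+1$ lines through $Q$ covering $m$ realize the dual blocking set.

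\emph{Step 3: identify the coloring.} All points other than $Q$ and the points of $m$ lie on a unique unbalanced line, so they are red. On a balanced line not through $Q$, there are $q$ red points, together with the point on $m$. Since $r\le b$ and $q\ge 2$, the point on $m$ must be blue, and the parameters come out as $r=q$, $b=1$ up to swapping colors. This makes every point of $m$ blue. The lines through $Q$ are unbalanced, and each contains one point of $m$. If $Q$ were red, each such line would have $q$ red points, making it balanced, a contradiction; hence $Q$ is blue. This yields the claimed coloring: a blue line $m$, a blue point $Q\notin m$, and all other points red.

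The main obstacle is Step 2: ruling out configurations in which $m$ is covered by a non-concurrent family of roughly $q+1$ lines, or in which several lines avoid $\cS$. I would first try a double count of incidences between the points of $m$ and $U$, combined with the red-count equation at points of $\cS$. The bound $|U|<2q+1$ is used precisely there.
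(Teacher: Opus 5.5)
Your Step 2 is the whole content of the lemma, and it is not carried out. The claims that $m$ is the only unbalanced line missing $\cS$, and that $U\setminus\{m\}$ is a pencil with vertex $Q\notin m$, are stated only as expectations. They cannot follow from covering counts alone: $\{m,m'\}$ together with the pencil through a point off $m\cup m'$ is a dual blocking set of size $q+3$ in which neither $m$ nor $m'$ contains a point lying on a unique line of the set. So the red counts must be used, and two ingredients are missing.

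\emph{First ingredient.} Every point of $m$ lies on an \emph{essential} unbalanced line other than $m$, that is, a line meeting $\cS$ and hence carrying $r_0=|R|-q(r-1)$ red points. This is the first place the bound on $|U|$ is needed. The paper gets it from Sz\H{o}nyi's result that, for a dual blocking set of size less than $2q+1$, the essential lines themselves form a minimal dual blocking set.

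\emph{Second ingredient.} You must count red points through points of $m$, not through points of $\cS$.
\begin{enumerate}[(i)]
\item If a red point of $m$ lay only on $m$ and an essential line, the count would give $|m\cap R|=r$, which is impossible. Hence red points of $m$ lie on at least three unbalanced lines.
\item Then $|U|\le 2q$ forces a blue point $P\in m$ lying only on $m$ and an essential line $e$.
\item Counting through $P$ gives $|m\cap R|=r-q$, so $r\in\{q,q+1\}$. The case $r=q+1$ is impossible: then $b=0$, and two of the $q$ blue points of $m$ already give $2q+1$ unbalanced lines. So $r=q$ and $m$ is entirely blue.
\item Every point off $m\cup e$ is then red. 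An essential line through a second point of $m$ forces $r_0=q-1$, so $e$ has exactly one blue point off $m$.
\end{enumerate}
This determines the coloring directly, without first classifying $U$.

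There are also two smaller problems. In Step 3 you invoke $r\le b$, but this is not a hypothesis of the lemma, and it contradicts the lemma's own conclusion ($r=q$, $b=1$). You also cannot swap colors, since red is fixed by the hypothesis on $\cS$. In your configuration, $r=q+1$ must instead be excluded directly: $m$ would then be completely red, hence balanced. Finally, Step 1 sets out to prove that every unbalanced line meets $\cS$, which is false in the very configuration you are aiming for. Its usable conclusion is simply that a line whose red count differs from $r_0$ misses $\cS$, where $\cS\neq\emptyset$ by Lemma~\ref{lem: balanced_line}.
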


\begin{proof}
If $R$ denotes the set of red points, then on an unbalanced line containing a red point through which that line is the only unbalanced line, we see $r_0 := |R| - q(r-1)$ red points.  

Suppose that $u$ is an unbalanced line with $r_1 \neq r_0$ red points. Since the unbalanced lines form a dual blocking set and have size less than $2q+1$, by 
Sz\H onyi (\cite{szonyiblsets}, Proposition 3.1), the essential lines (i.e., the lines containing a point through which that line is the only unbalanced line) form a minimal dual blocking set. This implies that, through the points of a line $u$, apart from $u$ itself, we observe at least one more unbalanced line with $r_0$ red points.

The fact that $r_0 = |R| - q(r-1)$ yields that if through a red point on $u$ there is another unbalanced line with $r_0$ red points, then there must be a third unbalanced line through that point. If there are no blue points on $u$, then we already count at least $2(q+1) + 1$ unbalanced lines, which contradicts the assumption.

If every blue point on $u$ has at least three unbalanced lines passing through it, then again $|U| \geq 2(q+1) + 1$, a contradiction. Thus, there is a blue point $P$ on $u$ with only two unbalanced lines. Namely, $u$ and the other compulsory unbalanced line $e$ containing $r_0$ red points. Counting the total number of red points on the lines through $P$, we obtain $(q-1)r + r_0 + r_1$. However, we also know that there are $q(r-1) + r_0$ red points in total. This implies that $r_1 = 0$ and $r = q$ or $r_1=1$ and $r=q+1$.
In the latter case, there can be at most one blue point (otherwise there was at least $2q+1$ lines containing blue points, i.e. unbalanced), which is impossible since $u$ contains $q$ blue points. In the former case, all points not on $u$ or $e$ are red. Now, if we take another point $P'$ on $u$, then, as mentioned above, there is an unbalanced line through $P'$ with $r_0$ red points. Since all points outside of $u$ and $e$ are red, this means that $q-1 \leq r_0 \leq q$. However, a balanced line has exactly $q$ red points, so we must have $r_0 = q-1$, from which the proof follows. 
\end{proof}

Next, we will consider the case where there are two points of different colors, each having a unique unbalanced line passing through it.

\begin{lemma}
\label{lem:2_unique_unbalanced}
Let $P$ and $Q$ be two points of different colors. Assume that there is a unique unbalanced line through each of these points, then the line $PQ$ is balanced. Let $\ell_P$  and $\ell_Q$ be the unique unbalanced lines through $P$ and $Q$ respectively and $T:= \ell_P \cap \ell_Q$. Then the points of $\ell_P \setminus \{T\}$ are colored the same as $P$, and similarly, the points of $\ell_Q \setminus \{T\}$ are colored the same as $Q$.
\end{lemma}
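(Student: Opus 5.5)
Let $P$ be red and $Q$ blue, and let $R$ denote the set of red points. The plan is to count red points along the pencils through $P$ and through $Q$ and compare the two totals.

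Through $P$ every line other than $\ell_P$ is balanced. Each of these $q$ lines carries $r$ red points besides possibly $P$, so
\[
|R| = 1 + q(r-1) + (|\ell_P\cap R| - 1) = q(r-1) + |\ell_P\cap R|.
\]
Through $Q$ every line other than $\ell_Q$ is balanced, and $Q$ is blue, so
\[
|R| = qr + |\ell_Q\cap R|.
\]
Comparing the two gives $|\ell_P\cap R| = q + |\ell_Q\cap R|$. Since $|\ell_P\cap R|\le q+1$, this forces $|\ell_Q\cap R|\in\{0,1\}$ and correspondingly $|\ell_P\cap R|\in\{q,q+1\}$.

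Next I show that the line $PQ$ is balanced, arguing by contradiction. If $PQ$ were unbalanced, it would be the unique unbalanced line through both $P$ and $Q$, so $\ell_P=\ell_Q=PQ$. The relation would then read $|\ell_P\cap R| = q+|\ell_P\cap R|$, which is absurd for $q\ge 1$. Hence $\ell_P\ne\ell_Q$, $PQ$ is balanced, and $T=\ell_P\cap\ell_Q$ is a single point. In particular $T\ne P$ and $T\ne Q$, since $Q\notin\ell_P$ and $P\notin\ell_Q$ (otherwise $PQ$ would coincide with one of $\ell_P$, $\ell_Q$).

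Finally I pin down the colours. The line $\ell_P$ has at most one non-red point, and $\ell_Q$ has at most one red point. The point $T$ lies on both lines. If $T$ is red, then $T$ is the only red point of $\ell_Q$, so every point of $\ell_Q\setminus\{T\}$ is blue, and all of $\ell_P$ is red. If $T$ is blue, then $T$ is the only blue point of $\ell_P$, so every point of $\ell_P\setminus\{T\}$ is red, and all of $\ell_Q$ is blue. In either case the points of $\ell_P\setminus\{T\}$ share the colour of $P$ and the points of $\ell_Q\setminus\{T\}$ share the colour of $Q$.

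I do not expect any real obstacle. The one point that needs care is the bookkeeping of whether $P$ and $Q$ themselves are counted in the pencil sums.
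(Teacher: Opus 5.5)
Your proposal is correct and follows the paper's argument: both count $|R|$ along the pencils through $P$ and $Q$ to get $|\ell_P\cap R|-|\ell_Q\cap R|=q$, and the balancedness of $PQ$ and the colours of $\ell_P\setminus\{T\}$ and $\ell_Q\setminus\{T\}$ follow from this identity. The only differences are cosmetic: the paper proves $PQ$ balanced by a separate count of the red points off $PQ$, whereas you extract it from the same identity, and you write out the case analysis on the colour of $T$ that the paper compresses into ``hence we are done''.
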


\begin{proof}
Assume that $P$ is colored red.
If $PQ$ is unbalanced, 
counting the number of red points not on $PQ$ along the lines through $P$ would give $(r-1)q$
while through $Q$ would give
$rq$ which is a contradiction.
Let the number of red points on $\ell_P$ be $r_1$, and on $\ell_Q$ be $r_2$. By counting the number of red points on the lines through $P$, we get $q(r - 1) + r_1$. Similarly, by counting the red points on the lines through $Q$, we get $qr + r_2$. It follows that $r_1 - r_2 = q$ and hence we are done.
\end{proof}

\begin{proposition}
\label{prop: not the same color}
Assume that the points through which there is a unique unbalanced line are not all of the same color. Assume also that $|U|<2q+1$. Then, there exists a point $T$ such that the points, excluding $T$, on exactly $r$ lines through $T$ are colored red, and the points, excluding $T$, on the remaining $q+1-r$ lines through $T$ are colored blue. The point $T$ itself can be either red or blue.
\end{proposition}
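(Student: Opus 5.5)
The plan is to use Lemma \ref{lem:2_unique_unbalanced} to pin down the colouring of every \emph{essential} line (an unbalanced line through a point on no other unbalanced line). Then I show that all essential lines pass through one point $T$, and finally that they fill the whole pencil at $T$.

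\textbf{Step 1: red counts of essential lines.} Fix a red point $P$ and a blue point $Q$, each on a unique unbalanced line $\ell_P$, $\ell_Q$. Counting red points on the lines through $P$ and through $Q$, as in the proof of Lemma \ref{lem:2_unique_unbalanced}, gives $|R| = q(r-1)+r_1 = qr + r_2$. Here $r_1 \in \{q,q+1\}$ and $r_2 \in \{0,1\}$ are the red counts of $\ell_P$ and $\ell_Q$. The same count at an arbitrary essential point $X$ shows:
\begin{itemize}
\item if $X$ is red, its line $\ell_X$ has exactly $|R|-q(r-1)=q+r_2$ red points, hence at most one non-red point;
\item if $X$ is blue, $\ell_X$ has exactly $|R|-qr=r_2$ red points.
\end{itemize}
So all red essential lines have the same red count, and likewise all blue essential lines.

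\textbf{Step 2: the covering assumption.} I first check that $U$ must cover all points. If some point lies on no unbalanced line, counting through it gives $|R|=(q+1)r$. Comparing with $|R| = qr+r_2$ forces $r=r_2\le 1$, and this degenerate case is quickly ruled out by hand. Hence, since $|U|<2q+1$, Sz\H onyi's result (\cite{szonyiblsets}, Proposition 3.1), already used in Lemma \ref{lem: line and a point}, applies: the essential lines form a minimal dual blocking set.

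\textbf{Step 3: concurrency.} Suppose there are at least two blue essential lines $m,m'$, and let $T=m\cap m'$. Take any red essential line $\ell$. By Lemma \ref{lem:2_unique_unbalanced}, each point of $\ell\cap m$ and $\ell\cap m'$ other than $T$ has two incompatible colours: it is red as a point of $\ell$ and blue as a point of $m$ (or of $m'$). So $\ell$ passes through $T$. Symmetrically, if there are at least two red essential lines, every blue essential line passes through their common point, which must be the same $T$. Thus all essential lines are concurrent at $T$.

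A set of concurrent lines can only be a dual blocking set if it is the entire pencil at $T$; otherwise a point off all of them exists. Hence all $q+1$ lines through $T$ are essential. Each of them is, apart from $T$, monochromatic: red lines are red off $T$ and blue lines are blue off $T$, again by Lemma \ref{lem:2_unique_unbalanced}.

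\textbf{Step 4: counting the red lines.} A balanced line not through $T$ meets each line of the pencil in exactly one point other than $T$. Its $r$ red points therefore show that exactly $r$ lines through $T$ are red. The colour of $T$ is unconstrained, which gives the statement.

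\textbf{Main obstacle.} The delicate step is the unbalanced situation in Step 3 where there is only one essential line of some colour, say a single blue essential line $\ell_Q$. Then concurrency does not follow directly. Here I would use minimality of the essential dual blocking set: every point of $\ell_Q$ other than the essential ones must be covered by further essential lines, and these are red essential lines. Each red essential line has at most one non-red point, and this point must be its intersection with $\ell_Q$, where the colours clash. Counting the unbalanced lines needed to cover $\ell_Q$ and the remaining blue points should then give at least $2q+1$ lines, contradicting $|U|<2q+1$. The case of a single red essential line is symmetric. I expect this count to be the main technical work, together with the small-$r$ degenerate cases left over from Step 2.
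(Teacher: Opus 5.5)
Your Step 3 misreads Lemma \ref{lem:2_unique_unbalanced}. For a red essential line $\ell$ and a blue essential line $m$, the lemma colours $\ell\setminus\{\ell\cap m\}$ red and $m\setminus\{\ell\cap m\}$ blue. It says nothing about the point $X=\ell\cap m$ as a point of $m$. If $\ell\not\ni T$, you do get $X$ red from the pair $\ell,m'$, but there is no clash. When $r_2=1$, take $\ell$ completely red and $m,m'$ blue except for their single red points $\ell\cap m$ and $\ell\cap m'$. This configuration is compatible with the lemma and with your Step 1. So the implication ``two blue essential lines $\Rightarrow$ every red essential line passes through $T$'' holds only when $r_2=0$, since then $m$ is entirely blue by Step 1. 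The symmetric implication holds only when $r_2=1$. After swapping colours to get $r_2=0$, what remains is exactly one blue essential line together with red essential lines not yet known to be concurrent. This is the paper's case $k=1$. The paper settles it by recolouring $TQ$ red, replacing $r$ by $r+1$, and invoking Lemmas \ref{lem: a point, Baer} and \ref{lem: line and a point}. Your proposal only announces a count there (``should then give at least $2q+1$ lines''). So the argument is missing exactly at the substantive case, and Step 3 as written asserts concurrency in a configuration where it is not justified.

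Within your own framework the gap closes quickly, without counting $2q+1$ lines. Normalise to $r_2=0$: blue essential lines are entirely blue, and each red essential line has exactly one blue point. For red $\ell$ and blue $m$, the point $\ell\cap m$ is then blue, so it is the blue point of $\ell$. With two blue essential lines this gives concurrency: all blue ones pass through the blue point of a fixed red line, and every red line's blue point is the common point of two blue lines. If $m$ is the only blue essential line, no blue point off $m$ lies on an essential line. By the covering property you quote from Sz\H onyi, there are no such points, so the blue points are exactly the points of $m$. Then every other line has $q$ red points, forcing $r=q$. But a red essential line has $q+r_2=q$ red points, so it would be balanced, a contradiction. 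A minor point: Step 2 treats only a blue uncovered point. A red one gives $r=q+r_2$, which is excluded just as easily, and covering is in any case the standing hypothesis of this subsection.
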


\begin{proof}
 The assumption of the proposition and Lemma \ref{lem:2_unique_unbalanced} imply that there exists a point $P$ colored red, a point $Q$ colored blue, and a point $T$ such that the points of $TP \setminus \{T\}$ are red, and the points of $TQ \setminus \{T\}$ are all blue. Without loss of generality, we may assume that $T$ is colored blue. Hence, the line $TQ$ is completely blue. Lemma \ref{lem:2_unique_unbalanced} now shows that a unique unbalanced line through a blue point is completely blue and passes through $T$. Assume that there are $k$ such completely blue lines through $T$. Note that since all lines but $TP$ through $P$ are balanced, $r \neq q+1$.

First, assume that $k = 1$. Color every point on the line $TQ$ red. 
If $r$ was $q$, then since through $Q$ every line but $TQ$ was balanced, now the entire plane will be red and hence the proposition follows. 
If $r< q$ then in this new coloring, let us call lines with $r+1$ red points be balanced. The number of red points on every line but $TQ$ was increased by one, and so in this new coloring balanced lines are exactly those that were balanced earlier ($TQ$ remains unbalanced since $q+1 \neq r+1$). 
But now, the points with a unique unbalanced line through them are all red, the unique unbalanced line is completely red, and there are at least two such lines ($TQ$ and $TP$). A contradiction follows from 
Lemma \ref{lem: a point, Baer}
and from Lemma \ref{lem: line and a point} and hence if $k=1$ then $r=q$. 

Now suppose that $k > 1$. Note that this means that if a unique unbalanced line passes through a red point $P'$, then it must be the line $P'T$ and the points of this line are all red except $T$.
We show that each line through $T$ is unbalanced. On the contrary, suppose that there is a balanced line $\ell$ through $T$. From above this means that there is no point on $\ell$ through which a unique unbalanced line passes, otherwise $\ell$ would be unbalanced. (Note that through $T$ there pass at least three unbalanced lines). Since unbalanced lines cover all points, counting them through the points of $\ell$ we see at least $2(q+1)$ unbalanced lines; a contradiction.
Hence all lines through $T$ are unbalanced. Let $e$ be a line through $T$.
Now if through each point of $e\setminus\{T\}$, another unbalanced line passes different from $e$, then again in total we count at least $2q+1$ unbalanced lines. This means that each line through $T$ contains a point $S$ such that $ST$ is the unique unbalanced line through $S$ and hence the proposition follows. 
\end{proof}

We summarize the results above in the following theorem.

\begin{theorem}
\label{thm: unbalanced are bl set}
Color the points of the plane $\PG (2,q)$ either red or blue. A line is balanced if it contains $r$ red and $b$ blue points, where $0 \le r \leq b \le q+1$. Assume unbalanced lines form a dual blocking set and there are less than $2q+1$. Then, the red points form one of the following structures:

\begin{enumerate}[(1)]
    \item a point, 
    \item a line and a point not on the line, 
    \item $r$ concurrent lines, 
    \item $r$ concurrent lines minus their intersection, 
    \item a Baer subplane. 
\end{enumerate}
\end{theorem}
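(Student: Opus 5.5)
We split into cases according to the colors of the points through which exactly one unbalanced line passes; we call these \emph{essential points}. Since $U$ is a dual blocking set with $|U|<2q+1$, $U$ is nonempty. We may also assume the coloring is not monochromatic, since otherwise every line is balanced (for $r=0$ or $r=q+1$). Lemma \ref{lem: balanced_line} then guarantees that essential points exist, because every balanced line carries one. The symmetry between the colors lets us choose which color class we describe; at the end we translate the description back to the red points under the convention $r\le b$.

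\textbf{Case A: the essential points are not all of the same color.} We apply Proposition \ref{prop: not the same color}. It gives a point $T$ such that $r$ lines through $T$ are red except possibly at $T$, and the remaining lines through $T$ are blue except possibly at $T$. So the red points are $r$ concurrent lines, or $r$ concurrent lines minus their intersection, depending on the color of $T$. These are structures (3) and (4). When $r=1$, structure (3) is a single line, which is still of this form.

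\textbf{Case B: all essential points have the same color.} Say this color is $c$. There are two subcases.

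If all unbalanced lines carry the same number of points of color $c$, we apply Lemma \ref{lem: a point, Baer}; its hypothesis $|U|<q\sqrt q+1$ holds because $|U|<2q+1$ and $q\ge 4$ (small $q$ can be checked by hand). The lemma shows that the color opposite to $c$ is a single point or a Baer subplane.

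If two unbalanced lines differ in their number of $c$-points, Lemma \ref{lem: line and a point} applies. It shows that the points of the opposite color form a line plus a point off it, and the points of color $c$ form the complement.

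It remains to identify which color class is red. If the opposite color is red, we obtain structures (1), (2) or (5) directly. If instead the red points are the complement of a point, of a line plus a point, or of a Baer subplane, we compute $r$ from the balanced lines.
\begin{itemize}
\item For the complement of a point, the balanced lines are those missing the point, so $r=q+1$. Then $r\le b$ forces $q+1\le 0$, which is impossible.
\item For the complement of a line plus a point, the balanced lines miss the point, so $r=q$. This is excluded by $r\le b$, except that this configuration corresponds, after swapping colors, to the degenerate cases already listed.
\item For the complement of a Baer subplane, $r=q-\sqrt q$ and $b=\sqrt q+1$, which violates $r\le b$ once $q\ge 4$.
\end{itemize}
So under $r\le b$ the red class is always the small one, and we land in (1), (2) or (5).

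The main obstacle is exactly this bookkeeping of which color plays which role. Lemma \ref{lem: line and a point} uses $r$ for the color $c$, so when $c$ is blue we must re-read that lemma with the roles of the colors and of $r,b$ swapped. We then have to check that the resulting value of $r$ is compatible with $r\le b$, and where it is not, confirm that the configuration falls under the listed structures after the swap. The small cases $q\le 3$ also need separate handling.
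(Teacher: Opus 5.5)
Your route is the same as the paper's. You split on whether the essential points have one color or two, use Proposition \ref{prop: not the same color} for (3)--(4), use Lemmas \ref{lem: a point, Baer} and \ref{lem: line and a point} in the one-color case, and then use $r\le b$ to decide which class is red. The paper compresses that last step into one clause. Your expansion of it contains one wrong computation, and as written that step fails.

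\textbf{The Baer case.} You take $r=q-\sqrt q$ and $b=\sqrt q+1$, but these are the counts on the \emph{unbalanced} lines. In the proof of Lemma \ref{lem: a point, Baer} the unbalanced lines form a dual Baer subplane. That is, they are the $q+\sqrt q+1$ lines meeting the Baer subplane in $\sqrt q+1$ points. The tangent lines could not be the unbalanced ones anyway, since there are $q^2-\sqrt q\ge 2q+1$ of them. So the balanced lines are the tangents, and $r=q$, $b=1$.

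Your version fails even on its own terms. For $q=4$ your values give $r=2\le 3=b$, so you have not excluded the red class being the complement of a Fano subplane of $\PG(2,4)$. With the correct values, $r=q>1=b$, and the exclusion is immediate for every $q$.

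\textbf{Two smaller points.}
\begin{itemize}
\item In the line-plus-point case, $r=q$ and $b=1$ already contradict $r\le b$. The clause about ``degenerate cases after swapping colors'' is superfluous: the swapped situation is just your other subcase, which gives (2) with $r=1$.
\item No hand check for small $q$ is needed when you invoke Lemma \ref{lem: a point, Baer}. The bound $|U|<q\sqrt q+1$ is used there only to exclude a dual unital, which requires $q$ to be a square. For square $q\ge 4$ one has $2q+1\le q\sqrt q+1$.
\end{itemize}
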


\begin{proof}
By Lemma \ref{lem: balanced_line}, there are points uniquely covered by unbalanced lines. If these points are of different colors, then by Proposition \ref{prop: not the same color}, we are in cases $(3)$, $(4)$. If they are of the same color, then Lemmas \ref{lem: a point, Baer} and \ref{lem: line and a point}, along with the assumption that $r \leq b$, lead to cases $(1)$, $(2)$, and $(5)$.
\end{proof}

\subsection{Unbalanced lines do not form a dual blocking set}
\label{subs: not dual bl set}
\subsubsection{Preliminaries for general $q$}
In this subsection, we assume that the unbalanced lines do not cover all points. By counting the red points through such a non-covered point, we immediately see that all such points must have the same color. Hence, we make the following assumption.

\begin{ass}
\label{ass: non-covered are blue}
Points that are not covered by unbalanced lines are blue, and thus the total number of red points is $(q+1)r$.  
\end{ass}

\begin{proposition}
\label{prop: structure}
Assume that points not covered by unbalanced lines are blue. Then, either a line is completely red with the points outside being blue, or no point is covered by exactly one unbalanced line.
\end{proposition}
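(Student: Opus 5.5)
Suppose some point $P$ lies on exactly one unbalanced line $\ell$; we show that $\ell$ is completely red and every other point is blue. The approach is to count the red points of the plane along the pencil through $P$ and compare with the total $(q+1)r$ given by Assumption \ref{ass: non-covered are blue}.

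Let $r_\ell$ be the number of red points on $\ell$. The $q$ other lines through $P$ are balanced, so each contains $r$ red points. Counting red points outside $P$ along these lines, and adding the red points of $\ell$ (which already include $P$ if $P$ is red), gives
\[
|R| = qr - q\epsilon + r_\ell ,
\]
where $\epsilon = 1$ if $P$ is red and $\epsilon = 0$ if $P$ is blue. Setting this equal to $(q+1)r$ yields
\[
r_\ell = r + q\epsilon .
\]
If $P$ is blue, this says $r_\ell = r$, so $\ell$ would be balanced, a contradiction. Hence $P$ is red and $r_\ell = r+q$. Since $r_\ell \le q+1$, we get $r \le 1$. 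The case $r=0$ is impossible, because then $r_\ell = q$ would require a red point while the total number of red points is $(q+1)\cdot 0 = 0$. Therefore $r=1$ and $r_\ell = q+1$, so $\ell$ is completely red.

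Finally, the total number of red points is $(q+1)r = q+1$, and all of them lie on $\ell$. Hence every point outside $\ell$ is blue, which is the first alternative of the statement. If no such point $P$ exists, no point is covered by exactly one unbalanced line, which is the second alternative.

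The only delicate step is the bookkeeping for whether $P$ itself is counted. Otherwise the argument is a direct counting argument.
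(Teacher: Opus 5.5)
Your proof is correct and follows essentially the same route as the paper: count the red points along the pencil through $P$, compare with $|R|=(q+1)r$ to force $P$ red and $r_\ell=r+q$, then rule out $r=0$ and conclude that $r=1$ and $\ell$ is completely red. The differences are only cosmetic: you handle both colors of $P$ in a single formula via $\epsilon$, and you finish using the total count $|R|=q+1$ rather than by looking at the balanced lines through $P$.
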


\begin{proof}
Assume that $P$ is a point through which exactly one unbalanced line passes. Since points not covered by an unbalanced line are blue, by counting the red points on the unbalanced line through $P$, we see that $P$ must be red. Let $l$ be the unique unbalanced line through $P$ with $r_0$ red points. Counting the red points on the lines through $P$, we find that the total number of red points is $q(r-1)+r_0$, which is $(q+1)r$ (by Assumption \ref{ass: non-covered are blue}). Hence, we have $r_0 - r = q$, i.e., either $r_0 = q+1$ and $r=1$, or $r_0 = q$ and $r=0$. In the first case, looking around from $P$, we see that all points outside $l$ must be blue. The second case is not possible, as it implies that balanced lines are completely blue, but $P$ is red.
\end{proof}

Next, we will use the famous nuclei polynomial to show that there cannot be too many points not covered by the unbalanced lines. 
To that we identify the points of AG$(2,q)$ with the elements in $\F_{q^2}$ and so three distinct points $a$, $b$, $c$ are collinear if and only if
$(a-c)^{q-1} = (b-c)^{q-1}$ holds. This way the ideal
points (directions) correspond to $(q+1)$-th roots of unity. (For more details see \cite{H}, Section 2.2.)
We will need the following assumption.

\begin{ass}
\label{ass: non r mod p line}
There is a line for which the number of red points is {\bf not} $ r\pmod p $.  
\end{ass} 

The example coming from a dual hyperoval, mentioned in the Introduction, shows that the existence of lines with not $r$ mod $p$ secants is relevant. The second author showed in \cite{M} that such examples only occur for $q$ even if the number of unbalanced lines is at most $4q/3$. He also described the two non-trivial examples.

\begin{proposition}
\label{prop: uncovered points}
Assume that the unbalanced lines do not cover all points. Under Assumption \ref{ass: non r mod p line}, there are at most $q-1$ points not covered by an unbalanced line.
\end{proposition}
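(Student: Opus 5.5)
We argue via the nuclei (Rédei-type) polynomial in the $\F_{q^2}$ model of $\AG(2,q)$. By Assumption \ref{ass: non r mod p line} there is a line $\ell_0$ whose number of red points is not $r \pmod p$. We take $\ell_0$ as the line at infinity, so that $\AG(2,q)=\PG(2,q)\setminus \ell_0$ is identified with $\F_{q^2}$. Let $R' = R\setminus \ell_0$ be the set of affine red points, and let $N$ be the set of points not covered by unbalanced lines. By Assumption \ref{ass: non-covered are blue} these points are blue. We aim to show $|N|\le q-1$.

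For an affine point $x$ consider
\[ F(X)=\sum_{a\in R'} (X-a)^{q-1}. \]
Fix a point $x\in N\setminus \ell_0$. Every line through $x$ is balanced, so $x$ is blue and $x\notin R'$. For each direction $d$ (a $(q+1)$-th root of unity), the affine red points on the line through $x$ with direction $d$ number $r$ or $r-1$, according to whether the ideal point $d$ is blue or red. Summing $(x-a)^{q-1}=d$ over these points gives
\[ F(x)=\sum_d (r-\epsilon_d)\,d = -\sum_{d \text{ red}} d, \]
using $\sum_d d = 0$. Here $\epsilon_d=1$ exactly when $d\in \ell_0$ is red. Thus $F(x)$ equals a constant $c$, namely minus the sum of the red ideal points, for every $x\in N\setminus\ell_0$.

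So the affine part of $N$ lies among the roots of $F(X)-c$. The degree of $F$ is at most $q-1$. Its leading coefficient is $|R'| \bmod p$, the coefficient of $X^{q-1}$. The key step is to show that $F-c$ is not the zero polynomial, and this is where Assumption \ref{ass: non r mod p line} enters. If $|R'|\not\equiv 0 \pmod p$, then $\deg(F-c)=q-1$ and we get at most $q-1$ affine points of $N$. By Assumption \ref{ass: non-covered are blue}, $|R|=(q+1)r$, so $|R'|=(q+1)r-|R\cap\ell_0| \equiv r - |R\cap \ell_0| \pmod p$. This is nonzero mod $p$ precisely because $\ell_0$ is not an $r \bmod p$ secant. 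So Assumption \ref{ass: non r mod p line} is exactly what makes the leading coefficient nonzero.

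It remains to handle points of $N$ on $\ell_0$ itself. Since $\ell_0$ has a number of red points different from $r \pmod p$, it is unbalanced. Hence every point of $\ell_0$ is covered by an unbalanced line, so $N\cap \ell_0=\emptyset$. Therefore $|N|\le q-1$.

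The main obstacle is the evaluation of $F(x)$ at uncovered points. One has to check carefully that the count of affine red points per direction depends only on whether the ideal point is red, and that the collinearity criterion $(a-c)^{q-1}=(b-c)^{q-1}$ gives exactly the direction $d$ as the $(q-1)$-th power. Once $F$ is constant on $N$ and nonconstant of degree $q-1$, the bound follows immediately.
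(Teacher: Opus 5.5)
Your proof is correct and is essentially the paper's argument. It uses the same nuclei-type polynomial $\sum_{a\in R'}(X-a)^{q-1}$, shifted by the sum of the red ideal points, which vanishes at every uncovered point and has nonzero leading coefficient $|R'| \bmod p$. You also make explicit two points the paper leaves implicit: why $|R'|\not\equiv 0 \pmod p$ (via $|R|=(q+1)r$), and why no uncovered point lies on the chosen line at infinity.
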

\begin{proof}
Let us choose the line at infinity as the line guaranteed by Assumption \ref{ass: non r mod p line}. Then, the number of red points on the remaining affine plane is non-zero modulo $p$.

Identify the points of AG$(2,q)$ with the elements in $\F_{q^2}$, and hence the affine red points $R'$ form a subset of $\F_{q^2}$. Let $a_1, \dots, a_k$ be the red points on the line at infinity, which can be viewed as the $(q+1)$-th roots of unity. Write the following polynomial for the red points:
\[ 
  F(X) =\sum_{a\in  R'} (X-a)^{q-1}+a_1+\ldots + a_k,
\]
The polynomial is not identically zero, since its leading coefficient is non-zero (it corresponds to the number of affine red points). When $X = x_0$ is a blue point not covered by an unbalanced line (Assumption \ref{ass: non-covered are blue}), $x_0$ is a root of $F(X)$. This is because all the lines through $x_0$ contain $r$ red points (including the point at infinity), so $F(x_0)$ is $r$ times the sum of all $(q+1)$-th roots of unity, which equals zero. This proves the proposition.
\end{proof}

By Proposition \ref{prop: structure}, if we are not in the case where a line is completely red with the points outside being blue, then the unbalanced line in the dual plane forms a set with no tangents, and at most $q-1$ lines are disjoint from it. Such sets can be characterized if we impose a suitable bound on their size.

\begin{lemma}
\label{lem: 2 lines}
Assume that $S$ is a set of size at most $2q+1$ in $\PG (2,q)$, with no tangents and at most $q-1$ lines disjoint from $S$. Then, $S$ is the symmetric difference of two lines, unless $q=4$, $|S|=9$, and $S$ is the complement of the union of three non-concurrent lines.
\end{lemma}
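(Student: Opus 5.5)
Let $|S|=2q+1-\varepsilon$ with $\varepsilon\ge 0$. Since no line is tangent, every line meets $S$ in $0$ or in at least $2$ points. We count the points of $S$ on the lines through a point.

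\textbf{Step 1: a point outside $S$ sees only few lines.} Let $P\notin S$ and let $t_P$ be the number of lines through $P$ disjoint from $S$. The other $q+1-t_P$ lines through $P$ each carry at least $2$ points of $S$, so $|S|\ge 2(q+1-t_P)$. Hence $t_P\ge (\varepsilon+1)/2$, and in particular $t_P\ge 1$. Let $\tau$ be the number of lines disjoint from $S$; by hypothesis $1\le \tau\le q-1$.

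\textbf{Step 2: the disjoint lines cover the complement.} Every point of the complement lies on a line disjoint from $S$. So the $\tau$ disjoint lines cover all $q^2+q+1-|S|\ge q^2-q$ points outside $S$. The union of $\tau$ lines has at most $\tau(q+1)-\binom{\tau}{2}$ points when they are in general position, and exactly $\tau q+1$ points when they are concurrent. Comparing with $q^2-q$ forces $\tau$ to be close to $q-1$ (for $q$ not tiny).

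\textbf{Step 3: the disjoint lines are concurrent.} Since the disjoint lines cover the complement, each point of $S$ lies on no disjoint line. For a point $Q\in S$, count the points of $S$ on the lines through $Q$: the other points of $S$ are spread over $q+1$ lines, each meeting $S$ in at least $2$ points, so $|S|-1\ge q+1$. Combine this with a double count of the incidences between $S$ and the disjoint lines, and with the second moment $\sum_\ell |\ell\cap S|^2$. I expect this to show that the lines meeting $S$ mostly meet it in exactly $2$ points. It should then follow that the complement of $S$ is the union of $\tau$ lines whose pattern forces two points $A$ and $B$ such that the $q$ lines through $A$ other than $AB$, and the $q$ lines through $B$ other than $AB$, cover everything outside $S$.

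Equivalently, I would aim to show that $S$ consists of the points of two lines $\ell_1,\ell_2$ with their intersection removed. This is the symmetric difference, of size $2q$. (Note that the symmetric difference has no tangents, and the lines disjoint from it are those through $\ell_1\cap\ell_2$ other than $\ell_1,\ell_2$: there are $q-1$ of them, which is consistent.) Concretely, take $P\notin S$ with the maximal $t_P$. If $t_P\ge 2$, every line through $P$ that meets $S$ is nearly a $2$-secant. Using that the disjoint lines through $P$ cover many points, derive that $\tau\ge q-1$ pins down the configuration: all $q-1$ disjoint lines pass through one point $V$, and the two remaining lines through $V$ contain $S$.

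\textbf{Main obstacle.} The hard step is proving concurrency of the disjoint lines, that is, excluding $\tau$ disjoint lines in more general position while still covering about $q^2$ points and leaving a tangent-free residue. This is exactly where the small exception appears. For $q=4$, the complement of three non-concurrent lines has $21-12=9$ points, no tangents, and only $3\le q-1$ disjoint lines. So the inequality from Step 2, $\tau(q+1)-\binom{\tau}{2}\ge q^2+q+1-|S|$, must be solved carefully for small $q$ and small $\tau$. I would first treat $\tau\le 3$ by hand, recovering the $q=4$ case, and then handle large $\tau$ by showing that three non-concurrent disjoint lines produce a triangle. The points of $S$ must avoid all three sides, and a $2$-secant count through a vertex would then give $|S|>2q+1$.
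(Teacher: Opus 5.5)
There is a genuine gap. Your Steps 1 and 2 are right and match the paper's starting point: every point off $S$ lies on a passant, so the $\tau\le q-1$ passants cover the complement of $S$, which has at least $q^2-q$ points.

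You then use the wrong extremal bound. The quantity $\tau(q+1)-\binom{\tau}{2}$ is the size of the union of $\tau$ lines in general position, and that is the \emph{smallest} possible size of such a union, not the largest. So the inequality $\tau(q+1)-\binom{\tau}{2}\ge q^2+q+1-|S|$ in your last paragraph does not follow from anything. The correct fact is that $\tau$ lines cover at most $\tau q+1$ points, with equality only when they are concurrent, since each further line meets the first one and adds at most $q$ new points.

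This single fact removes what you call the main obstacle. It gives $(q-1)q+1\ge q^2+q+1-|S|$, i.e.\ $|S|\ge 2q$. If $|S|=2q$, equality forces exactly $q-1$ concurrent passants through a point $V$, whose union is the complement of $S$. Hence $S$ is the union of the two remaining lines through $V$ minus $V$. Concurrency is therefore immediate rather than the hard step. Your Step~3 (second moments, ``mostly $2$-secants'', choosing $P$ with maximal $t_P$) states what you expect to happen but gives no argument.

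The case you never actually treat is $|S|=2q+1$. There the passants must cover exactly $q^2-q$ points, one fewer than the concurrent maximum. Your proposed triangle argument cannot handle it. A vertex of a triangle of passants lies on only $q-1$ further lines, so counting $0$-or-${\ge}2$ secants through it can never give $|S|>2q+1$; in the $q=4$ exception this count gives exactly $9=2q+1$.

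What is needed is again a covering count:
\begin{enumerate}[(i)]
\item Since $q-2$ lines cover at most $(q-2)(q+1)<q^2-q$ points, there are exactly $q-1$ passants, and they are not all concurrent.
\item Let $m$ be the maximum number of passants through one point. Then $2\le m\le q-2$, and the union has at most $1+mq+(q-1-m)(q+1-m)=q^2-m(q-m)\le q^2-2q+4$ points.
\item This is less than $q^2-q$ for $q>4$, so those cases are impossible.
\item This leaves only $q=4$ with three non-concurrent passants, which is the stated exception.
\item The cases $q=2,3$ fail directly, because a set of $2$ or $6$ points cannot be a union of lines there.
\end{enumerate}
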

\begin{proof}
Pick any point $P$ not in $S$. Since $|S| \leq 2q+1$, there is a line through $P$ that is disjoint from $S$. In other words, the points outside $S$ are covered by lines disjoint from $S$. There are at most $q-1$ lines disjoint from $S$, and they can cover at most $(q-1)q+1$ points (equality holds only if there are $q-1$ passant lines and these lines are concurrent). Thus, $S$ must contain at least $2q$ points, and for $|S|=2q$, the set $S$ is the symmetric difference of the two remaining lines passing through the common points of the $q-1$ passants.

Now assume that \(|S|=2q+1\). Then its complement has \(q^2-q\) points and is covered by at most \(q-1\) passants. If \(q=2\), the complement has \(2\) points, whereas a passant has \(3\), which is impossible. If \(q=3\), the complement has \(6\) points; one passant covers only \(4\) points, while two distinct passants have union of size \(7\), so this case is also impossible.

Now let \(q>4\). Since \(q-2\) lines cover at most \((q-2)(q+1)<q^2-q\) points, there must be \(q-1\) passants. Let \(m\) be the maximum number of these passants through a point. They cannot all be concurrent, so \(2\leq m\leq q-2\); the \(m\) lines through this point cover \(1+mq\) points, and each remaining line adds at most \(q+1-m\) new points. Hence their union has size at most
\[
1+mq+(q-1-m)(q+1-m)
=q^2-m(q-m)
\leq q^2-2q+4
<q^2-q,
\]
a contradiction. Thus \(q=4\) is the only remaining case. Here three passants are necessary and they must be non-concurrent; their union is precisely the complement of \(S\).
\end{proof}

\begin{theorem}
\label{thm:not bl and non r mod p}
Color the points of the plane $\PG (2,q)$ either red or blue. A line is balanced if it contains $r$ red and $b$ blue points, where $0 < r \leq b < q+1$. Assume that there are at most $2q+1$ unbalanced lines, and they do not cover all the points of $\PG (2,q)$. Furthermore, suppose that there exists an unbalanced line with not $r$ mod $p$ red points on it. Assume moreover that $q>3$. Then, the red points form one of the following structures: either a line with $q$ red points and a point outside the line; or two completely red lines and a point outside these lines. 
\end{theorem}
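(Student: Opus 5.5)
We combine Proposition \ref{prop: structure}, Proposition \ref{prop: uncovered points} and Lemma \ref{lem: 2 lines}. Since the unbalanced lines do not cover the plane, Assumption \ref{ass: non-covered are blue} holds after possibly swapping colors. We use $r\le b$ together with the hypothesis to fix the colors so that uncovered points are blue and $|R|=(q+1)r$. By Proposition \ref{prop: structure}, either some line is completely red and all other points are blue, or no point lies on exactly one unbalanced line. In the first case $r=1$, and the red line is the only unbalanced line. Then every line has $1\equiv r \pmod p$ red points, which contradicts the hypothesis, so this case does not occur. (Alternatively, this case is excluded because it would be a structure not on the list.) Hence, in the dual plane, $U$ is a set of at most $2q+1$ points with no tangent lines. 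Proposition \ref{prop: uncovered points} applies because an unbalanced line with a non-$r$ mod $p$ number of red points exists. It says that at most $q-1$ points are uncovered, that is, dually $U$ has at most $q-1$ passant lines. Since $q>3$, Lemma \ref{lem: 2 lines} then says that $U$ is dually the symmetric difference of two lines, or that $q=4$, $|U|=9$ and we are in the exceptional configuration.

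In the main case, $U$ consists of the lines through two points $A\neq B$, except the line $AB$. So $|U|=2q$, the line $AB$ is balanced, and the uncovered points are exactly the $q-1$ points of $AB\setminus\{A,B\}$. These points are blue by Assumption \ref{ass: non-covered are blue}. Every point $X$ off $AB$ lies on exactly two unbalanced lines, $XA$ and $XB$. Counting red points on the lines through a blue point $X$ off $AB$ gives $|XA\cap R|+|XB\cap R|=2r$, since the two sides of the count must equal $(q+1)r$. Counting through a red $X$ gives $|XA\cap R|+|XB\cap R|=2r+q$. We write $\alpha_\ell$ for the number of red points on a line $\ell\ni A$, and $\beta_m$ for lines $m\ni B$. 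Since $AB$ is balanced and $A,B$ are the only points of $AB$ that can be red, $A$ and $B$ contribute $r\le 2$ red points.

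The next step is a short combinatorial analysis of the grid formed by the $q$ lines through $A$ and the $q$ lines through $B$, other than $AB$. The relation reads $\alpha_\ell+\beta_m=2r+q[\ell\cap m\in R]$ for all such pairs. Fixing $m$ and varying $\ell$ shows that the color of the grid point $\ell\cap m$ is determined by $\alpha_\ell$ alone. Symmetrically, it is determined by $\beta_m$ alone. Hence every $\alpha_\ell$ and $\beta_m$ takes at most two values, and the red points of the grid form a "combinatorial rectangle": either all of the grid, or none of it, or the union of the rows with one value, etc. From $\alpha+\beta$ differing by exactly $q$ between red and blue cells, together with the bound $0\le\alpha,\beta\le q+1$, we get that the red grid points are either a full row plus a full column, or a single row/column. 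The line counts then force a specific combination with $r\in\{1,2\}$. Matching these possibilities with $|R|=(q+1)r$ and with the colors of $A$ and $B$ yields exactly the two listed structures. One is a line with $q$ red points together with a point outside it (here $r=1$). The other is two completely red lines together with a point outside them (here $r=2$, and the red lines pass through $A$ and $B$ respectively). A last check uses the hypothesis about a non-$r$ mod $p$ unbalanced line to exclude degenerate configurations in which all lines are $r$ mod $p$ secants, which are the dual-hyperoval-like ones.

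I expect the main obstacle to be the grid bookkeeping together with the $q=4$ exceptional case. For $q=4$, $U$ is dually the complement of a triangle, and the uncovered points are the $3$ vertices of a triangle $\Delta$. I would again count red points through points covered twice or three times to get linear equations, and then solve them by hand. In the solution, all lines should be $r$ mod $2$ secants, which contradicts the hypothesis, or the count $|R|=5r$ fails. I would first try the parity argument, because it is the same mechanism that produces the dual hyperoval examples for $q$ even.
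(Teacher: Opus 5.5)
Your reduction is the paper's: exclude the completely red line, then apply Propositions \ref{prop: structure}, \ref{prop: uncovered points} and Lemma \ref{lem: 2 lines}. The relation $\alpha_\ell+\beta_m=2r+q[\ell\cap m\in R]$ is a sound way to organise the two-pencil case, but three of the conclusions you draw from it are wrong.
\begin{enumerate}[(i)]
\item If the colour of $\ell\cap m$ were determined by $\alpha_\ell$ alone and also by $\beta_m$ alone, it would be constant along rows and columns, so the whole grid would be monochromatic.
\item ``A full row plus a full column'' is impossible. For $\ell_1\neq\ell_0$, $m_1\neq m_0$, the blue cell $\ell_1\cap m_1$ and the red cells $\ell_0\cap m_1$, $\ell_1\cap m_0$ give $\alpha_{\ell_0}-\alpha_{\ell_1}=\beta_{m_0}-\beta_{m_1}=q$. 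Hence $\alpha_{\ell_0}+\beta_{m_0}=2r+2q$, which is not an allowed value.
\item For $r=2$ the two red lines do not pass through $A$ and $B$ respectively. If $R=e_1\cup e_2\cup\{X\}$, the unbalanced lines are those through $e_1\cap e_2$ or through $X$, other than the line joining these two points. So both red lines pass through one centre, and the extra red point is the other centre.
\end{enumerate}

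The missing step is a range argument. For fixed $m$, all $\alpha_\ell$ lie in $\{2r-\beta_m,\,2r-\beta_m+q\}$, so $\max\alpha-\min\alpha\in\{0,q\}$; likewise for $\beta$. But $(\max\alpha+\max\beta)-(\min\alpha+\min\beta)\le q$, so one family, say $\{\beta_m\}$, is constant. Then the colour of $\ell\cap m$ depends on $\ell$ only, i.e.\ every line through $A$ other than $AB$ is monochromatic off $A$. The paper reaches this by counting through a red and a blue point of one line through $P_1$.

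The grid is not monochromatic, because $2<(q+1)r\le(q+1)^2/2<q^2$. Let $k$ of these lines be red. Then $\beta_m=k+[B\in R]$, a blue cell gives $[A\in R]+k+[B\in R]=2r$, and the balanced line $AB$ gives $[A\in R]+[B\in R]=r$. So $k=r\in\{1,2\}$, and $\beta_m\neq r$ forces $B\in R$. This yields exactly the two listed structures, with all red lines through $A$.

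The case $q=4$ is left as a plan, and ``or the count $|R|=5r$ fails'' is not an argument. Yet the statement includes $q=4$, and Lemma \ref{lem: 2 lines} leaves this configuration open. Your parity instinct is right, but what makes it work is a specific dual hyperoval that contains the bad line and whose other lines are all balanced. Take $A=(1:0:0)$, $B=(0:1:0)$, $C=(0:0:1)$, and normalise the unbalanced line $\ell$ with $|\ell\cap R|\not\equiv r\pmod 2$ to $x+y+z=0$. Then the lines $x+y+z=0$, $z=0$, $y+z=0$, $x+z=0$, $x+\omega y=0$, $x+\omega^2y=0$ (where $\omega^2+\omega+1=0$) form a dual hyperoval. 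Each of them except $\ell$ passes through $A$, $B$ or $C$, hence is balanced. Every point lies on $0$ or $2$ lines of a dual hyperoval, so counting red incidences gives $|\ell\cap R|+5r\equiv 0\pmod 2$, a contradiction.

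Finally, you cannot use $r\le b$ to choose the colour of the uncovered points, since $r\le b$ is already fixed by the hypothesis. Instead, apply Propositions \ref{prop: structure} and \ref{prop: uncovered points}, which are symmetric in the two colours, to the colour class of the uncovered points. The completely-blue-line alternative would give $b=1$. In the two-pencil case, red uncovered points are excluded because the balanced line $AB$ would then contain at least $q-1>(q+1)/2\ge r$ red points.
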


\begin{proof}
We may assume that we are not in the case where a line is completely red with the points outside being blue, since in this case all lines contain 1 mod $p$ red points. By Proposition \ref{prop: uncovered points}, Proposition \ref{prop: structure}, and Lemma \ref{lem: 2 lines}, applied in the dual plane, either the unbalanced lines form the symmetric difference of two pencils, or $q=4$ and they are precisely the nine lines avoiding three non-collinear points $A,B,C$.

We first exclude the latter case. Let $\ell$ be an unbalanced line whose number of red points is not congruent to $r$ modulo 2. Choose coordinates such that
\[
A=(1:0:0),\quad B=(0:1:0),\quad C=(0:0:1),
\qquad \ell:\ x+y+z=0.
\]
Let $\omega\in\mathbb{F}_4$ satisfy $\omega^2+\omega+1=0$.
The six lines
\[
x+y+z=0,\quad z=0,\quad y+z=0,\quad x+z=0,
\quad x+\omega y=0,\quad x+\omega^2y=0
\]
form a dual hyperoval. Apart from $\ell$, each passes through
one of $A,B,C$ and is therefore balanced. Counting incidences between red points and the six lines of
the dual hyperoval, each red point is counted either zero or
twice. On the other hand, $\ell$ contributes $|\ell\cap R|$
incidences and each of the other five lines contributes $r$,
since it is balanced. Hence,
\[
|\ell\cap R|+5r\equiv0\pmod2.
\]
Hence $|\ell\cap R|\equiv r\pmod2$, a contradiction.

Thus there are two points, $P_1$ and $P_2$, such that every
unbalanced line passes through one of them. Moreover, the
line joining them, $P_1P_2$, is balanced.

Assume there is a line through $P_1$, different from $P_1P_2$, that contains both a red point and a blue point (different from $P_1$). Let us denote these points as $R_1$ and $B_1$, respectively. Counting the red points through $R_1$ and $B_1$ immediately shows that the line $R_1P_2$ must be completely red (except possibly at $P_2$), and similarly, the line $B_1P_2$ must be entirely blue. Therefore, the points on the lines through $P_2$, except for $P_2$ itself, must be monochromatic. If there is no such line through $P_1$, we immediately can conclude the same for the lines through $P_1$.

Thus, for one of \(P_1,P_2\), say \(P_2\), the \(q\) lines through \(P_2\) different from \(P_1P_2\) are monochromatic apart from \(P_2\). Let \(k\) of them be red. Every line through neither \(P_1\) nor \(P_2\) is balanced and contains \(k\) red points from these lines; hence all points of \(P_1P_2\setminus\{P_1,P_2\}\) have the same color. They cannot be red, since \(r\leq(q+1)/2<q-1\) for \(q>3\), so they are blue and \(k=r\). Since \(P_1P_2\) is balanced and \(r>0\), it follows that \(r=1\) or \(2\), according to how many of \(P_1,P_2\) are red. These two cases give the two structures in the statement (the completely red line case having been excluded at the beginning).
\end{proof}

\section{The original Erd\H os problem in PG$(2,q)$ and nearby cases}
\label{sec:orig}

In this section, we consider the original problem of Erd\H os, that is $r=b=(q+1)/2$, and extend the argument to
\[
r+b=q+1,\qquad
\left|r-\frac{q+1}{2}\right|\leq\frac{p-3}{2},
\]
where $q=p^h$ and $p$ is odd. Theorem \ref{thm: unbalanced are bl set} settles this problem when the unbalanced lines cover all points of the plane. Hence we will assume that the unbalanced lines do not cover all points of $\PG(2,q)$.

For the lemmas and proposition below, we may interchange the colors and suppose that a point not covered by unbalanced lines is red; we do not assume $r\leq b$. This means that
\[
|R|=(q+1)(r-1)+1,\qquad |B|=(q+1)b.
\]
If $q>3$ and $\min\{r,b\}\geq3$, Theorem \ref{thm:not bl and non r mod p}, applied with the colors interchanged if necessary, implies that a coloring with at most $2q+1$ unbalanced lines must have $r$ mod $p$ red points on every line. Indeed, $q>4$, and the two exceptional structures have $\min\{r,b\}\leq2$. If $q=p$, our bounds on $r$ imply that every line with $r$ mod $p$
red points is balanced. Thus every line would be balanced,
a contradiction. Hence $q\geq p^2$.

\begin{lemma}
\label{lem:discr}
Let $r+b=q+1$ and assume that the unbalanced lines, $U$, do not form a dual blocking set. Let $R$ be the set of red points and $B$ the set of blue points. Then
\[
D:=\sum_{\ell\in U}(|\ell\cap R|-r)^2
 =\sum_{\ell\in U}(|\ell\cap B|-b)^2=qrb.
\]
\end{lemma}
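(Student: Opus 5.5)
The plan is to use the standard double counting of the first and second moments of the line intersection numbers, and to observe that balanced lines contribute nothing. First, the two sums in the statement are equal term by term. Every line has $q+1=r+b$ points, so $|\ell\cap B|-b=(q+1-|\ell\cap R|)-(q+1-r)=-(|\ell\cap R|-r)$, and the squares agree. It therefore suffices to compute $\sum_{\ell\in U}(|\ell\cap R|-r)^2$. For a balanced line the summand is $0$, so this equals $\sum_{\ell}(|\ell\cap R|-r)^2$ over \emph{all} $q^2+q+1$ lines.

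Next we use the hypothesis that $U$ is not a dual blocking set, so some point $P$ lies on no unbalanced line. As in the discussion preceding the lemma, we interchange colors if necessary so that $P$ is red. Note that the target quantity $qrb$ is symmetric under swapping $(R,r)\leftrightarrow(B,b)$, so nothing is lost. Counting red points on the $q+1$ balanced lines through $P$ gives $n:=|R|=(q+1)(r-1)+1=(q+1)r-q$.

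Then we use the standard incidence identities $\sum_\ell |\ell\cap R|=n(q+1)$ and $\sum_\ell |\ell\cap R|(|\ell\cap R|-1)=n(n-1)$. The second counts ordered pairs of distinct red points, since each pair determines a unique line. Hence
\[
\sum_\ell(|\ell\cap R|-r)^2=n(n-1)+n(q+1)-2rn(q+1)+r^2(q^2+q+1)=n(n+q)-2rn(q+1)+r^2(q^2+q+1).
\]
The key simplification is $n+q=(q+1)r$, so $n(n+q)=rn(q+1)$. The expression then collapses to
\[
r\bigl(r(q^2+q+1)-n(q+1)\bigr)=r\bigl(r(q^2+q+1)-r(q+1)^2+q(q+1)\bigr)=rq(q+1-r)=qrb.
\]

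There is no real obstacle here; the argument is routine bookkeeping. The only point needing care is justifying $|R|=(q+1)(r-1)+1$, which requires the uncovered point to be red, together with noting that the color swap preserves both the statement and the value $qrb$.
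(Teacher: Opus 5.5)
Your proof is correct and follows the same route as the paper: you use the color swap so the uncovered point is red and $|R|=(q+1)r-q$, then the standard first and second moment identities $\sum_\ell |\ell\cap R|=|R|(q+1)$ and $\sum_\ell |\ell\cap R|^2=|R|(|R|+q)$ over all lines, noting that balanced lines contribute zero. You also use $|\ell\cap R|-r=-(|\ell\cap B|-b)$ to identify the two sums, and your factorization via $n+q=(q+1)r$ is a tidy way to carry out the same algebra.
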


\begin{proof}
Let $r_i$ denote the number of red points on the line $\ell_i$.
The standard equations give
\[
\sum_i r_i=|R|(q+1),\qquad
\sum_i r_i^2=|R|(|R|+q).
\]
Since balanced lines contribute zero, we obtain
\begin{align*}
D&=\sum_i(r_i-r)^2\\
 &=|R|(|R|+q)-2r|R|(q+1)+(q^2+q+1)r^2\\
 &=qrb,
\end{align*}
where we used $|R|=(q+1)r-q$ and $r+b=q+1$.
Finally, $|\ell\cap R|-r=-(|\ell\cap B|-b)$.
\end{proof}

We will call a line a blue (or red) {\em majority} line if it contains more than $b$ blue (or $r$ red) points.

\begin{lemma}
Let $r+b=q+1$ and $|r-(q+1)/2|\leq(p-3)/2$. Assume that the unbalanced lines do not form a dual blocking set and that every line contains $r$ mod $p$ red points. Then at least three blue majority lines pass through every blue point.
\end{lemma}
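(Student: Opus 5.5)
The plan is to look at the pencil through a fixed blue point $P$ and do a weighted count of the blue points. By the convention of this section, points not covered by unbalanced lines are red, so $|B|=(q+1)b$. For a line $\ell$ through $P$ write $b_\ell=|\ell\cap B|$. Every blue point other than $P$ lies on exactly one line through $P$, so
\[
\sum_{\ell\ni P}(b_\ell-1)+1=|B|=(q+1)b .
\]
Put $d_\ell:=b_\ell-b$; the goal is to show that at least three of the $d_\ell$ are positive. The identity above rearranges to
\[
\sum_{\ell\ni P} d_\ell = q .
\]

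Next I would pin down the possible values of $d_\ell$. Every line has $r$ mod $p$ red points, and $r+b=q+1$, so every line also has $b$ mod $p$ blue points. Hence each $d_\ell$ is a multiple of $p$. A blue majority line is exactly a line with $d_\ell>0$, and then $d_\ell\ge p$. Since $b_\ell\le q+1$, we have $d_\ell\le q+1-b=r$. The key refinement is to replace $r$ by the largest multiple of $p$ not exceeding it. Write $r=(q+1)/2+e$ with $|e|\le(p-3)/2$. Because $p\mid q$ and $p$ is odd, $(q+1)/2\equiv (p+1)/2 \pmod p$. Therefore the residue of $r$ modulo $p$ is $(p+1)/2+e$, which lies in $[2,p-1]$, so no wrap-around occurs. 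It follows that every positive $d_\ell$ satisfies
\[
d_\ell\le r-\Bigl(\tfrac{p+1}{2}+e\Bigr)=\tfrac{q-p}{2}.
\]

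To conclude, note that the positive terms among the $d_\ell$ must sum to at least $q$, since the remaining terms are nonpositive. With at most two positive terms, their sum is at most $2\cdot\frac{q-p}{2}=q-p<q$, a contradiction. Hence at least three blue majority lines pass through $P$.

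The main obstacle is the middle step. The naive bound $d_\ell\le r$ only gives $2r\le q+p-2$, which is not enough to rule out two majority lines. The argument works only after combining the divisibility by $p$ with the exact residue of $r$ modulo $p$, and this is precisely where the hypothesis $|r-(q+1)/2|\le (p-3)/2$ is used.
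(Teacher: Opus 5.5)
Your proposal is correct and follows the paper's proof: the pencil count through a blue point gives $\sum_{\ell\ni P}(|\ell\cap B|-b)=q$, and the congruence hypothesis (every line has at least $(p+1)/2+e$ red points) bounds each positive summand by $(q-p)/2$, so at most two positive terms cannot reach $q$. Your bound via ``the largest multiple of $p$ not exceeding $r$'' restates the paper's lower bound on the number of red points per line, and you state the final step slightly more carefully, noting that the positive terms must sum to at least $q$.
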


\begin{proof}
Counting the blue points on the lines through a blue point $P$, we count every other blue point once and $P$ itself $q+1$ times. Subtracting $b$ for each of the $q+1$ lines gives
\[
\sum_{\ell\ni P}(|\ell\cap B|-b)
=|B|+q-(q+1)b=q.
\]
Write $r=(q+1)/2+e$, where $|e|\leq(p-3)/2$. By the congruence assumption, every line contains at least $(p+1)/2+e$ red points, and hence at most $q+1-(p+1)/2-e$ blue points. Thus each positive summand is at most $(q-p)/2$. Two positive summands cannot give a total of $q$, and hence at least three blue majority lines pass through $P$.
\end{proof}

\begin{proposition}
Let $r+b=q+1$ and $|r-(q+1)/2|\leq(p-3)/2$. Assume that the unbalanced lines do not form a dual blocking set and that every line contains $r$ mod $p$ red points. Then the number of unbalanced lines is at least $2q+2$.
\end{proposition}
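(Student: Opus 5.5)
The plan is a double count of blue excesses along lines, using the previous lemma (at least three blue majority lines through every blue point) together with the global identity for $\sum_\ell(|\ell\cap B|-b)$. We keep the normalization of this section: a point not covered by unbalanced lines is red, so $|B|=(q+1)b$. Let $M$ be the set of blue majority lines and $m$ the set of unbalanced lines with fewer than $b$ blue points (the red majority lines). Then $U=M\cup m$ is a disjoint union, and it suffices to show $|M|+|m|\geq 2q+2$.

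\emph{Step 1: global identity.} Counting incidences between blue points and all lines gives $\sum_\ell|\ell\cap B|=|B|(q+1)=(q+1)^2b$. Hence
\[
\sum_{\ell}(|\ell\cap B|-b)=(q+1)^2b-(q^2+q+1)b=qb .
\]
Balanced lines contribute $0$ to this sum. Writing $E_M=\sum_{\ell\in M}(|\ell\cap B|-b)>0$ and $E_m=\sum_{\ell\in m}(b-|\ell\cap B|)\geq 0$, we get $E_M-E_m=qb$.

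\emph{Step 2: lower bound on $E_M$.} By the preceding lemma, every blue point lies on at least three lines of $M$. Counting incidences between blue points and lines of $M$ therefore gives $\sum_{\ell\in M}|\ell\cap B|\geq 3|B|=3(q+1)b$. Consequently
\[
E_M\geq 3(q+1)b-|M|b .
\]

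\emph{Step 3: upper bound per minority line.} For $\ell\in m$ the deficit satisfies $b-|\ell\cap B|\leq b$, so $E_m\leq |m|\,b$. Combining this with Steps 1 and 2,
\[
|m|\,b\;\geq\; E_m \;=\; E_M-qb\;\geq\; 3(q+1)b-|M|b-qb .
\]
Dividing by $b>0$ gives $|m|\geq 2q+3-|M|$, that is, $|U|=|M|+|m|\geq 2q+3\geq 2q+2$.

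The main point to check is Step 2, since all the real content of the hypotheses (the congruence condition and the range of $r$) enters only through the preceding lemma. One should also confirm that $b>0$, which holds because $|r-(q+1)/2|\le (p-3)/2$ forces $b\geq 2$. No further obstacle is expected. A sharper count could exploit that deficits are multiples of $p$, but this is not needed for the stated bound.
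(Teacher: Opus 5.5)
Your proof is correct, and it takes a different and considerably shorter route than the paper's.

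\emph{The paper's route.} The paper argues by contradiction from $|U|\le 2q+1$ and works with the second moment $D=\sum_{\ell\in U}(|\ell\cap R|-r)^2=qrb$ of Lemma~\ref{lem:discr}. It picks a line $\ell$ with the most blue points, say with $d$ red points. It then uses $|U|\le 2q+1$ to show that every term of $D$ is at most $(r-d)^2$. Next it finds at least $q+2-2d$ blue points on $\ell$ lying on exactly three unbalanced lines, which sharpens the estimate for the corresponding pairs of terms. Finally it refutes the resulting inequality $f_e(t)\ge 0$ by a polynomial computation that needs $q\ge p^2$.

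\emph{Your route.} You use only the first-moment identity $\sum_{\ell\in U}(|\ell\cap B|-b)=qb$ together with the three-majority-lines lemma, in a single global incidence count. Written compactly, it reads $(q+|U|)\,b=\sum_{\ell\in U}|\ell\cap B|\ge\sum_{\ell\in M}|\ell\cap B|\ge 3|B|=3(q+1)b$. Your Step~3 deficit bound is just the observation $|\ell\cap B|\ge 0$ on the red majority lines.

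\emph{Comparison.} Your count yields $|U|\ge 2q+3$ directly. It needs no contradiction hypothesis, no extremal line, no sum-of-squares optimization, and not $q\ge p^2$, so it slightly strengthens the stated bound $2q+2$. For this statement the paper's extra machinery buys nothing. All the content of the hypotheses (the congruence condition and the range of $r$) is already packed into the preceding lemma, and a linear count then suffices.
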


\begin{proof}
Suppose, for a contradiction, that there are at most $2q+1$ unbalanced lines. As noted at the beginning of the section,
$q\geq p^2$. By the preceding lemma, at least three blue
majority lines pass through every blue point.

Put $r=(q+1)/2+e$, where $|e|\leq(p-3)/2$. Let $\ell$ be the line containing the maximum number of blue points, and suppose that $\ell$ has $d$ red points and $q+1-d$ blue points. The congruence assumption gives
\[
d\geq\frac{p+1}{2}+e\geq2.
\]
Since $\ell$ is a blue majority line, $r-d$ is a positive multiple of $p$, so $d\leq r-p<(q+1)/2$.

We now show that every line contains at most $2r-d$ red points. Counting through a red point $P$ gives
\[
\sum_{m\ni P}(|m\cap B|-b)=0,
\]
so a red point lies either on zero or on at least two unbalanced lines. Let $\ell'$ be a red majority line. If a red point on $\ell'$ lies on exactly two unbalanced lines,
say $\ell'$ and $\ell''$, then the deviations from $r$ on these
two lines sum to zero. Since $|\ell''\cap B|\leq q+1-d$ by the
choice of $\ell$, we obtain
\[
|\ell'\cap R|-r
=r-|\ell''\cap R|
=|\ell''\cap B|-b
\leq r-d.
\]
If there is no such red point on $\ell'$, then through each red point on $\ell'$ there must pass at least two unbalanced lines other than $\ell'$. Moreover, through every blue point on $\ell'$ there are at least three blue majority lines, all different from $\ell'$. Thus every point of $\ell'$ is on at least two unbalanced lines other than $\ell'$; so we would see at least $2(q+1)+1$ unbalanced lines, a contradiction. In particular, every term in $D$ is at most $(r-d)^2$.

We now provide an upper bound on the sum $D$ from Lemma~\ref{lem:discr}. To do so, we first show that the number of blue points on $\ell$ with exactly three unbalanced lines is at least $q+2-2d$. Assume, for a contradiction, that there are at least $d$ blue points on $\ell$ with more than three unbalanced lines. Each of the $d$ red points on $\ell$ lies on at least one further unbalanced line, $d$ of the blue points lie on at least three further unbalanced lines each, and the remaining $q+1-2d$ blue points lie on at least two each. These lines are distinct, so, including $\ell$, we obtain at least
\[
1+d\cdot1+d\cdot3+(q+1-2d)\cdot2=2q+3,
\]
which is again a contradiction.

Let $T$ be a blue point on $\ell$ with exactly three unbalanced lines: $\ell$, $\ell_1$, and $\ell_2$. All three are blue majority lines. Since
\[
(|\ell_1\cap B|-b)+(|\ell_2\cap B|-b)=q-r+d
\]
and each summand is at most $r-d$, their sum of squares is maximized when one equals $r-d$ and the other equals $q-2r+2d$. Hence,
\[
(|\ell_1\cap B|-b)^2+(|\ell_2\cap B|-b)^2
\leq(r-d)^2+(q-2r+2d)^2.
\]
Choose $q+2-2d$ such points $T$; they give disjoint pairs of terms in $D$. The remaining terms can be estimated from above by $(r-d)^2$. Thus, using the fact that there are at most $2q+1$ unbalanced
lines, we obtain
\[
D\leq(q+2-2d)(q-2r+2d)^2
+(q-1+2d)(r-d)^2.
\]

Put $t=d-e$ and $K=(p-3)/2$. The congruence assumption gives $t\geq(p+1)/2$. The three blue majority lines through $T$ also give $q\leq3(r-d)$, so $6t\leq q+3$. Using $D=qrb$ and multiplying the preceding inequality by four, we obtain $0\leq f_e(t)$, where
\begin{align*}
f_e(t)
&=4(q+2-2t-2e)(2t-1)^2 +(q-1+2t+2e)(q+1-2t)^2
-q\bigl((q+1)^2-4e^2\bigr).
\end{align*}

We show that $f_e(t)<0$. First,
\begin{align*}
f_0(t)
&=-2t(q-3t+3)^2-(q-1)^2 -2(t-2)(3t^2-2t+4)-8<0,
\end{align*}
since $t\geq(p+1)/2\geq2$ and so each term is negative. This settles the case $p=3$, since then $e=0$.

Now let $p\geq5$. Since $6t\leq q+3$, we have
\[
A:=(q+1-2t)^2-4(2t-1)^2\geq0.
\]
As $|e|\leq K$, it follows that
\[
f_e(t)=f_0(t)+2eA+4qe^2
\leq f_0(t)+2KA+4qK^2=f_K(t).
\]
Moreover,
\begin{align*}
f_K(t+1)-f_K(t)
&=-72\left(t-\frac{3q+4-6K}{18}\right)^2 +16\left(\frac13-K\right)q
+8\left(K-\frac23\right)^2-6<0,
\end{align*}
since $K\geq1$ and $q\geq p^2=(2K+3)^2$. Therefore,
\begin{align*}
f_e(t)
&\leq f_K\left(\frac{p+1}{2}\right)\\
&=-5q^2+(2p^2+8)q-6p^3+13p^2\\
&=-5(q-p^2)^2-8(p^2-1)(q-p^2) -3p^2(p^2+2p-7)<0.
\end{align*}
This is a contradiction.
\end{proof}

Theorem \ref{thm:not bl and non r mod p} together with the preceding proposition yield immediately the following corollary. Its assumptions do not prescribe the color of the points not covered by unbalanced lines.

\begin{corollary}
\label{cor:nearby}
Let $q=p^h>3$, where $p$ is odd, and let $r+b=q+1$,
\[
3\leq r\leq b,\qquad r\geq\frac{q-p+4}{2}.
\]
If the unbalanced lines do not form a dual blocking set, then there are at least $2q+2$ unbalanced lines.
\qed
\end{corollary}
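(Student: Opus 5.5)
The plan is to suppose, for a contradiction, that there are at most $2q+1$ unbalanced lines and they do not cover all points. We split according to whether every line carries $r$ mod $p$ red points, and dispose of each case with an earlier result.

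\emph{Normalising the colours.} By the remark opening Subsection~\ref{subs: not dual bl set}, all points not covered by unbalanced lines have the same colour. If that colour is red, we interchange the colours, which swaps $r$ and $b$. So in every case we work with a pair $(r',b')$, where $r'\in\{r,b\}$, the uncovered points are blue, and Assumption~\ref{ass: non-covered are blue} holds.

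\emph{Case 1: some line does not have $r'$ mod $p$ red points.} The hypotheses give $q>3$ and $0<r'<q+1$; the latter holds because $3\le r\le b=q+1-r$. So Theorem~\ref{thm:not bl and non r mod p} applies. Its conclusion forces the red points to be a line with $q$ red points plus a point, or two full lines plus a point, so $r'\in\{1,2\}$ (more precisely, $\min\{r,b\}\le 2$). This contradicts $3\le r\le b$.

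\emph{Case 2: every line has $r'$ mod $p$ red points.} Since $r\equiv -b \pmod p$ is not the relevant issue here, the point is simply that the hypothesis is colour-symmetric: from $r\ge (q-p+4)/2$ and $r\le b$ we get $r\le (q+1)/2$. Hence $|r-(q+1)/2|=(q+1)/2-r\le (p-3)/2$, and the same bound holds for $b$. Thus the hypotheses of the preceding proposition hold for $(r',b')$: it assumes $r+b=q+1$, $|r-(q+1)/2|\le (p-3)/2$, non-covering, and that every line has $r$ mod $p$ red points. That proposition gives at least $2q+2$ unbalanced lines. This contradicts our assumption in the regime $\le 2q+1$, and otherwise is the claim directly.

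\emph{Main obstacle.} The delicate step is checking that the earlier statements really apply after the colour swap. The preceding proposition is written for uncovered points of a prescribed colour: the section's convention colours them red with $|R|=(q+1)(r-1)+1$, whereas Lemma~\ref{lem:discr} uses $|R|=(q+1)r-q$. I would verify that its hypotheses are symmetric in $e\mapsto -e$, which is exactly why the bound is stated through $|e|$. I would also check that $q>3$ with $p$ odd excludes the $q=4$ exception of Lemma~\ref{lem: 2 lines}, and note that the section's remark gives $q\ge p^2$ in Case 2, as the proposition requires.
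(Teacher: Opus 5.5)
Your proposal is correct and follows the paper's own argument. If some line does not carry $r$ mod $p$ red points, Theorem~\ref{thm:not bl and non r mod p} applies, and its two exceptional structures force $\min\{r,b\}\le 2$; otherwise the preceding proposition applies, since its hypothesis $|r-(q+1)/2|\le (p-3)/2$ follows from $r\ge (q-p+4)/2$ and $r\le b$, and all its hypotheses are colour-symmetric.

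On your ``main obstacle'': $(q+1)(r-1)+1$ and $(q+1)r-q$ are the same number, so there is nothing to reconcile. The point that does need a word is that after swapping colours you may have $r'>b'$, which the theorem's statement formally excludes. Its proof, however, uses $r\le b$ only to show that the points of $P_1P_2\setminus\{P_1,P_2\}$ are blue. Those are exactly the uncovered points, so this already follows from your normalisation. The paper's ``applied with the colours interchanged if necessary'' relies on the same fact.
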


Applying the preceding corollary with $r=b=(q+1)/2$ and combining it with Theorem~\ref{thm: unbalanced are bl set}, noting that in the latter only cases (3) and (4) are possible, immediately gives the following theorem.

\begin{theorem}
\label{thm:erdos}
Let $r=b=\frac{q+1}{2}$ and $q>3$. Then the number of unbalanced lines is at least $2q+1$ unless the coloring is the trivial coloring defined in Definition \ref{def: trivial}. \qed
\end{theorem}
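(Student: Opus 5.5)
The plan is to split according to whether the set $U$ of unbalanced lines covers every point of $\PG(2,q)$, i.e.\ whether $U$ is a dual blocking set. We argue by contradiction and assume $|U|\le 2q$, so in particular $|U|<2q+1$.

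\textbf{Case 1: $U$ is a dual blocking set.} Here Theorem~\ref{thm: unbalanced are bl set} applies directly. With $r=b=(q+1)/2$ we check that only structures (3) and (4) survive:
\begin{itemize}
\item A single red point has $(q+1)r - q = (q+1)^2/2 - q$ red points in total, which is impossible once $q>3$. Alternatively, balanced lines would then have $0$ or $1$ red points, not $(q+1)/2$.
\item A line plus a point gives $q+2$ red points. A balanced line meets the red line in one point and contains at most one further red point, so it has at most $2$ red points. This forces $(q+1)/2\le 2$, i.e.\ $q\le 3$.
\item A Baer subplane gives balanced lines with $1$ or $\sqrt q+1$ red points. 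Neither equals $(q+1)/2$ for $q>3$; moreover $q$ is odd here, so a Baer subplane needs $q$ to be an even power of an odd prime.
\end{itemize}
Structures (3) and (4) say exactly that the red points are $r$ concurrent lines through a point $T$, with the color of $T$ arbitrary. This is the trivial coloring of Definition~\ref{def: trivial}.

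\textbf{Case 2: $U$ is not a dual blocking set.} Apply Corollary~\ref{cor:nearby} with $r=b=(q+1)/2$. We only need to verify its hypotheses:
\begin{itemize}
\item $q$ is odd, since $r$ is an integer.
\item $3\le r\le b$, since $q>3$ odd gives $q\ge 5$.
\item $r\ge (q-p+4)/2$, which is equivalent to $q+1\ge q-p+4$, i.e.\ $p\ge 3$. This holds because $p$ is odd.
\end{itemize}
The corollary then yields at least $2q+2$ unbalanced lines, contradicting $|U|\le 2q$.

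The hypothesis $q>3$ is needed both in Corollary~\ref{cor:nearby} and to exclude the degenerate structures in Case~1.

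\textbf{Main obstacle.} The real work lies in Corollary~\ref{cor:nearby}, which is assumed. The only delicate point here is the case analysis of Theorem~\ref{thm: unbalanced are bl set}, where the trivial coloring must be correctly recognized as cases (3)–(4). For this I would double-check that the ``$r$ concurrent lines'' description, for both colors of $T$, matches Definition~\ref{def: trivial} exactly.
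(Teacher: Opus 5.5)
Your proposal is correct and matches the paper's proof. Like the paper, you split on whether $U$ is a dual blocking set, apply Theorem~\ref{thm: unbalanced are bl set} (where, for $r=b=(q+1)/2$, only structures (3) and (4), i.e.\ the trivial coloring, survive) in one case, and apply Corollary~\ref{cor:nearby} in the other. One small slip: your first reason for excluding structure (1), via $|R|=(q+1)r-q$, is not valid here, because that count presupposes a red point lying on no unbalanced line, which cannot exist when $U$ is a dual blocking set. Your alternative argument (balanced lines would then contain at most one red point) is correct and suffices.
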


\section{The case of the Desarguesian plane of prime order}
\label{sec: prime}
Let us consider the pointset $R$ that is the points colored red. 
We have already noted that the case when $q = p$ is prime is special in the sense that, in this case most of the time, the set of non-$(r \bmod p)$-secants of $R$ coincides with the set of unbalanced lines. Under the assumption $r \leq b$, there are two exceptions, namely when $r=0$ a line with $p$ red points is unbalanced but contains $r$ mod $p$ red points, similarly when $r=1$ a line with $p+1$ red points is unbalanced but contains $r$ mod $p$ red points.

First, we will consider the case when the number of unbalanced lines ($\delta$) is relatively small (at most $\max\{3p+1,4p-22\}$). Subsequently, we will allow $\delta$ to be at most $d\sqrt{p}(p+1)$, while ensuring that the number of red points $r$ (or the number of blue points $b$) on a balanced line, remains small, at most $c\sqrt{p}$. For more on the constants $c, d$ see Theorem \ref{thm:mainprime}

We will consider multisets of $\PG (2,q)$, $q=p^h$, $p$ prime,  these sets are related to codewords generated by lines of the dual plane  of $\PG (2,q)$. Such codes are naturally considered over the prime field of $F_p$ and so the multiplicities of the points in a multiset will also be considered over $F_p$. We mostly consider the multiplicities as integers between $0$ and $p-1$, but there will be cases when we want to minimize their absolute values and so we will allow negative multiplicities. We will say that a point is not in a given multiset if it has multiplicity $0$. The size of a multiset is the sum of the multiplicities and the intersection number of a line and a multiset is the sum of the multiplicities of the points on that line. 

\subsection{Not too many unbalanced lines}
\label{subs: not too many}
\subsubsection{A very special case}
From another aspect, the very special case of the Erd\H{o}s problem in question has already been studied by Kiss and Somlai. They investigated point sets in $\mathrm{AG}(2,p)$. A point set $M \subset \mathrm{AG}(2,p)$ is said to admit a {\em non-special} direction $(d)$ if every line with slope $d$ contains exactly $r$ points, for some fixed integer $1 \leq r \leq p$. A direction is {\em special} otherwise. They characterized point sets admitting at most three special directions.

Color the points of $M$ red and all other points in $\mathrm{PG}(2,p)$ blue, and call a line balanced if it contains exactly $r$ red points. Then, all lines in a non-special direction are balanced, and hence a point set with few special directions results in a coloring with few unbalanced lines.
 
Given a prime number $p$, we can interpret the elements of $\mathbb{F}_p$ as the integers from $0$ to $p-1$. This naturally defines an ordering $<$ on the elements of $\mathbb{F}_p$. Kiss and Somlai proved the following results. 

\begin{example}[Kiss, Somlai] \cite{KS}
 \label{Ex:KissSomlaiExample}
 Let $p > 2$ be a prime. Then the set
 \[
  S = \sett{(x,y) \in \mathbb{F}_p^2}{ y < x }
 \]
 has exactly three special directions, namely $(0)$, $(1)$ and $(\infty)$.
\end{example}

\begin{result}[Kiss, Somlai] \cite{KS}
 \label{Res:KissSomlai}
Let $M$ be a point set in $\mathrm{AG}(2,p)$. Let $(d)$ be a non-special direction and assume that on each line with slope $d$, we see $1 \leq r \leq p$ points. If $M$ admits at most three special directions, then $M$ is either a union of $r$ parallel lines or, up to an affine transformation, it is equivalent to $S$ or its complement in Example~\ref{Ex:KissSomlaiExample}.
\end{result}

The above result can be translated into the following very special case of our Erd\H{o}s problem. Although it may seem extreme, we will see later that it will be very useful.

\begin{proposition}
\label{prop:affinecase}
Color the points of $\mathrm{PG}(2,p)$, where $p \geq 19$, into red and blue. A line is called balanced if it contains exactly $r$, where $0<r<p+1$, red points. Suppose that there are at most $\max\{3p+1, 4p-22\}$ unbalanced lines and 
assume that there is a line $\ell$ so that all unbalanced lines different from $\ell$ intersect $\ell$ in at most three different points, $P_1, \dots, P_k$, $1\leq k \leq 3$.
Then one of the following holds: \begin{itemize} \item[(1)] The red point set is the union of \(r\) concurrent lines through one of the points \(P_i\), with their common point removed, together with a subset of \(\{P_1,\dots,P_k\}\); and the blue point set has the analogous form with \(r\) replaced by \(b=p+1-r\). \item[(2)] One of the two color classes is, up to an affine transformation, equivalent to \(S\) or its complement in Example~\ref{Ex:KissSomlaiExample}, together with a subset of the directions \(\{(0),(1),(\infty)\}\). \end{itemize}
\end{proposition}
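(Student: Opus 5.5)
Take $\ell$ as the line at infinity, so $\mathrm{AG}(2,p)=\mathrm{PG}(2,p)\setminus\ell$, and let $M$ be the set of affine red points. An affine line through an ideal point other than $P_1,\dots,P_k$ is not unbalanced, so it carries exactly $r$ red points. The plan is to reduce to Result~\ref{Res:KissSomlai} for $M$ (or for the affine blue set), then put back the colours on $\ell$.

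First I determine the colour of the ideal points other than the $P_i$. Let $D$ be such a direction and let $\epsilon_D\in\{0,1\}$ record whether $D$ is red. Every affine line through $D$ has $r-\epsilon_D$ affine red points, so $|M|=p(r-\epsilon_D)$. Hence $\epsilon_D$ does not depend on $D$. Write $\epsilon$ for this common value; all non-special ideal points have the same colour.

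\emph{Case $\epsilon=0$.} Every line in a non-special direction meets $M$ in exactly $r$ points. Since $k\le 3$, $M$ has at most three special directions. If $r\le p$, Result~\ref{Res:KissSomlai} says that $M$ is either $r$ parallel lines, necessarily in a special direction $P_i$ (otherwise all directions are non-special and that is covered anyway), or, up to an affine map, $S$ or its complement. If $r=p+1$, then $M$ would have to be the whole affine plane, which is impossible because a non-special direction is blue. \emph{Case $\epsilon=1$.} Every non-special affine line has $r-1$ red points, so it has $b=p+1-r$ affine blue points. I apply the same argument to the affine blue set, using $b\ge 1$; if $b=p+1$ there is again nothing to check. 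This gives the symmetric statement with $r$ and $b$ swapped, which is why the conclusion is stated for both colour classes.

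Finally I put back $\ell$. If $M$ is $r$ parallel affine lines through $P_i$, then together with the $P_i$-coloured or not-coloured points on $\ell$ this is $r$ concurrent projective lines through $P_i$ with their common point removed, plus a subset of $\{P_1,\dots,P_k\}$. I still have to check that $\ell$ itself is consistent. Here $\ell$ may be unbalanced, since it contains $P_i$, so nothing is forced, and this gives (1). In the Kiss–Somlai case the special directions are exactly $(0),(1),(\infty)$, so $\{P_1,P_2,P_3\}$ is that set. The non-special ideal points are blue when $\epsilon=0$, and the $P_i$ are arbitrary, which gives (2). I also need to check that the bound $\max\{3p+1,4p-22\}$ is consistent; presumably the hypothesis is only needed so that the statement fits the later application, and I should verify that it is not used. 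If $p\ge 19$ turns out to matter, it will only be to exclude small degenerate configurations.

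The main obstacle I expect is the bookkeeping around Result~\ref{Res:KissSomlai}. Its hypothesis is that \emph{some} direction is non-special with $1\le r\le p$. If $k=3$, then any fourth direction is non-special, because $p+1\ge 4$, so the hypothesis holds, and the same applies when $k<3$. The cases $r=0$ and $r=p+1$ for the affine count need separate handling: then $M$ is empty or everything, and these fit into (1) as degenerate "concurrent lines" cases, or are excluded by $0<r<p+1$ together with the colour of the non-special ideal points.
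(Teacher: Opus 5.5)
Your proposal is correct and follows essentially the same route as the paper. The paper likewise counts red points through two non-special points of $\ell$ to see that they share a colour (your $|M|=p(r-\epsilon_D)$), takes that colour to be blue by symmetry (your $\epsilon=1$ case via the blue set), and applies Result~\ref{Res:KissSomlai} to the affine red points. You are also right that neither the bound on the number of unbalanced lines nor $p\ge 19$ is actually used in this step.
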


\begin{proof}
Let $P$ and $Q$ be two points on a line $\ell$ through which all lines except $\ell$ are balanced. By counting the number of red points on the lines through $P$ and then through $Q$, we deduce that $P$ and $Q$ must have the same color, say blue. 
Now, let $\ell$ be the line at infinity and consider the set of red points outside $\ell$. If there were none, then $r$ must have been $0$, which is not possible. Otherwise the result follows from Result \ref{Res:KissSomlai}.
\end{proof} 

\subsubsection{Preliminary results on codewords and special pointsets}
In this subsection, similarly to \cite{ASzW}, we will establish a connection between the set of red points of $\PG (2,p)$ and a codeword of the code $C_1(2,p)$. The results will then follow from earlier results on small weight codewords. 

\begin{definition}
 Let $\mathcal{P} = \{ P_1, P_2, \dots, P_{p^2+p+1} \}$ denote the set of points of ${\rm PG}(2,p)$, $p$ is prime. We define the \emph{characteristic vector} of a line $\ell$ in ${\rm PG}(2,p)$ as the vector 
\[
\chi_{\ell} = (\chi_{\ell,1}, \chi_{\ell,2}, \dots, \chi_{\ell,p^2+p+1}) \in \F_p^{p^2+p+1},
\]
where
\[
  \chi_{\ell,i} =
  \begin{cases}
   1 & \text{if } P_i \in \ell, \\
   0 & \text{otherwise}.
  \end{cases}
\]
The code $C_1(2,p)$ is defined as the $\F_p$-span of the characteristic vectors of the lines of ${\rm PG}(2,p)$. 

Let $c = (c_1, c_2, \dots, c_{p^2+p+1})$ be a codeword in $C_1(2,p)$. Then, the weight of the codeword, denoted by $w(c)$, is the number of non-zero coordinates in $c$. The support, $supp(c)$, of $c$ is the set of points $P_i$ corresponding to the non-zero coordinates of $c$.
\end{definition}

We will use the following result from the last two authors. For the exact statement, see \cite[Theorem 4.8, Corollary 4.10]{SzW}; here, we only require a simplified version.

\begin{result}[Sz\H onyi, Weiner] \cite{SzW}
 \label{Res:SzWPrime}
 Suppose that $p \geq 19$ is a prime and that $c \in C_1(2,p)$ with $w(c) \leq \max\{3p+1, 4p-22\}$.
 Then $c$ is either a linear combination of at most three lines or 
$supp(c)$ can be covered by three concurrent lines.
\end{result}

\begin{notation}
Coordinates of a point in ${\rm PG}(2,p)$ will be denoted by $(a, b, c)$ while the coordinates of a line with $[a, b, c]$.
\end{notation}

Next we will establish the connection between the coloring of the points of ${\rm PG}(2,p)$ and codewords.

\begin{definition}
\label{def: codeword of R}
Color the points of $\rm{PG}(2, p)$ red and blue and consider the set of red points $R$. Define  
\[
  c_R = \sum_{(x,y,z) \in R} \chi_{[x,y,z]}
\]
\end{definition}

Note that the coordinate of $c_R$ corresponding to the point $(a, b, c)$ is the number of points of $R$ on the line $[a, b, c]$ modulo p.

\begin{lemma}
\label{lem:codeword connection}   
Let $\underline{j}$ be the all $1$ vector. The coordinate of the codeword $c_R - r\underline{j}$ corresponding to the point $(a, b, c)$ is non-zero if and only if the line $[a, b, c]$ intersects the point set $R$ in not $r$ mod $p$ points. \qed
\end{lemma}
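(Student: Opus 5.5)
The plan is to compute the coordinate of $c_R$ at the point $(a,b,c)$ directly from Definition~\ref{def: codeword of R}, and then subtract $r\underline{j}$.

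First we unpack the coordinate of $c_R$. Its coordinate at $(a,b,c)$ is $\sum_{(x,y,z)\in R}\chi_{[x,y,z],(a,b,c)}$. By definition, $\chi_{[x,y,z],(a,b,c)}=1$ exactly when the point $(a,b,c)$ lies on the line $[x,y,z]$, that is, when $ax+by+cz=0$. This incidence condition is symmetric in the two roles. So it holds exactly when the point $(x,y,z)$ lies on the line $[a,b,c]$. Hence the coordinate counts the red points on the line $[a,b,c]$, reduced modulo $p$:
\[
(c_R)_{(a,b,c)}\equiv |R\cap[a,b,c]| \pmod p .
\]
This is the observation already noted just before the lemma.

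Next we subtract the constant vector. Every coordinate of $r\underline{j}$ equals $r$, so
\[
(c_R-r\underline{j})_{(a,b,c)}\equiv |R\cap[a,b,c]|-r \pmod p .
\]
This is zero in $\F_p$ if and only if $|R\cap[a,b,c]|\equiv r \pmod p$, which gives the claimed equivalence.

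There is no real obstacle here. The only care needed is the bookkeeping of duality: the point $(x,y,z)$ of $R$ is reused as the line $[x,y,z]$, and one must check that "the point $(a,b,c)$ lies on the line $[x,y,z]$" matches "the point $(x,y,z)$ lies on the line $[a,b,c]$". This holds by the symmetry of the bilinear form $ax+by+cz$. One should also note that the computation takes place in $\F_p$, so counts are only determined modulo $p$.
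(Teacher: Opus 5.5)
Your proposal is correct and follows the paper's reasoning. The paper gives no separate proof: it only remarks before the lemma that the coordinate of $c_R$ at $(a,b,c)$ is $|R\cap[a,b,c]| \bmod p$. That remark is exactly what your duality computation via the symmetry of $ax+by+cz=0$ establishes, and subtracting $r\underline{j}$ then finishes the argument.
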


In our case, $w(c_R - r\underline{j})$ will be small and hence we will be able to apply Result \ref{Res:SzWPrime}.

\subsection{Colorings with at most $\max\{3p+1, 4p-22\}$ unbalanced lines}
\label{subs: st most max}
In this subsection, we will assume that $p \geq 19$ prime. We color the points of $\PG (2,p)$ red and blue. We call a line balanced if it contains exactly $r$ red points, where $0\leq r\leq b=p+1-r$. Throughout this subsection, we will assume that the number of unbalanced lines is at most $\max\{3p+1, 4p-22\}$.
As mentioned before, unbalanced lines which contain not $r$ mod $p$ red points will be handled by earlier results.

\begin{definition}
We call an unbalanced line containing not 
$r$ mod $p$ red points a {\rm regular unbalanced} line. 
\end{definition}
Under the assumption \(0\leq r\leq b\), a non-regular unbalanced line can occur only when \(r=0\) or \(r=1\): in the former case it is a \(p\)-secant of \(R\), while in the latter case it is a completely red line. 
 We first examine these two exceptional cases.

\begin{lemma}
\label{lem:exceptional}
Let $R$ be the set of red points and let $\delta$ denote the number of unbalanced lines. Then the following hold:
\begin{itemize}
    \item[(1)] If \(r=0\),  for every integer \(k\) with
\(
1\leq k\leq \min\{|R|,p+1\},
\)
we have
\[
\delta\geq k(p+1)-\binom{k}{2}.
\]
    Moreover, if there is a non-regular unbalanced line, then $\delta\geq p^2+1$.
    \item[(2)] If $r=1$ and $\ell$ is a completely red line, then for every integer \(k\) with
\(
1\leq k\leq \min\{|R\setminus\ell|,p+1\},
\)
we have
\[
\delta\geq 1+k(p+1)-\binom{k}{2}.
\]
    \item[(3)] If \(2\leq r\leq b\), then every unbalanced line is regular.
\end{itemize}
\end{lemma}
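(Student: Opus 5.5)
Part (3) is immediate and I would do it first. If $2\le r\le b$, a line containing $r$ mod $p$ red points but not exactly $r$ would have $r\pm p$ or more red points. Since $r\le (p+1)/2$, we have $r-p<0$ and $r+p\ge p+2>p+1$, so no such count is possible. Hence every line with $r$ mod $p$ red points is balanced, which means every unbalanced line is regular.

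For part (1), where $r=0$, a line is unbalanced exactly when it contains a red point. Choose $k$ distinct red points $Q_1,\dots,Q_k$. Since $k\le p+1$, I can pick them so that they need not be collinear. The count does not depend on that choice anyway: $Q_i$ contributes the lines through it that miss $Q_1,\dots,Q_{i-1}$. There are at least $p+1-(i-1)$ such lines, because the joins $Q_iQ_j$ with $j<i$ give at most $i-1$ lines through $Q_i$. Summing over $i$ gives
\[
\delta\ge\sum_{i=1}^k (p+2-i)=k(p+1)-\binom{k}{2}.
\]
For the "moreover" part, suppose some line $m$ contains exactly $p$ red points (the non-regular case). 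Then $|R|\ge p$, and applying the bound with $k=p$ or $k=p+1$ already gives a lot. I would rather argue directly, though. Every point of $m$ except one, say $X$, is red, so every line meeting $m$ in a point other than $X$ is unbalanced. Those are all lines except the $p$ lines through $X$ different from $m$, so $\delta\ge p^2+p+1-p=p^2+1$.

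For part (2), where $r=1$ and $\ell$ is completely red, $\ell$ itself is unbalanced because it has $p+1\ne 1$ red points. Any other line meets $\ell$ in one red point, so it is unbalanced exactly when it contains a red point off $\ell$. Take $k$ red points $Q_1,\dots,Q_k$ off $\ell$. As before, $Q_i$ contributes at least $p+1-(i-1)$ lines through it that avoid the earlier $Q_j$. All of these lines differ from $\ell$, since the $Q_i$ are not on $\ell$, and they are unbalanced. Adding $\ell$ gives $\delta\ge 1+k(p+1)-\binom{k}{2}$.

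There is no real obstacle here. The only care needed is the non-regular clause in (1), where the direct count above is cleaner than using the $k$-bound.
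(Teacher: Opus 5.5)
Your proposal is correct and follows the paper's proof essentially step for step. Parts (1) and (2) use the incremental pencil count, where the $i$-th point adds at least $p+2-i$ new lines; the non-regular clause counts the $p^2$ lines through the $p$ red points of a $p$-secant plus the secant itself; and (3) follows from $r\pm p\notin[0,p+1]$. Your aside about choosing the $Q_i$ non-collinear is unnecessary, as you note yourself, since the bound holds for any $k$ distinct points.
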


\begin{proof}
For any $k \leq p+1$ distinct points, the union of the pencils through them contains at least
\[
k(p+1)-\binom{k}{2}
\]
lines: when the $i$-th pencil is added, at most $i-1$ of its lines have occurred before.

If $r=0$, every line through a red point is unbalanced, which gives the first inequality in (1). A non-regular unbalanced line then contains exactly $p$ red points. The $p$ red points on such a line determine $p^2$ distinct lines different from it, all unbalanced, giving at least $p^2+1$ unbalanced lines.

If $r=1$ and $\ell$ is completely red, every line through a red point outside $\ell$ contains at least two red points and hence is unbalanced. The line $\ell$ itself is not among these pencils, which proves (2).

Finally, (3) is immediate.
\end{proof}

\begin{lemma}
\label{lem: at most 3 pts}
Color the points of $\mathrm{PG}(2,p)$, where $p \geq 19$ is prime, red and blue, and call a line balanced if it contains exactly $0\leq r\leq b=p+1-r$ red points. Assume that the number of unbalanced lines is at most $\max\{3p+1, 4p-22\}$. Then, the following holds:
\begin{itemize}
    \item[(1)] There exist at most three points with the property that every regular unbalanced line contains at least one of them.
    \item[(2)] Suppose that we need three non-collinear points, $P_1$, $P_2$, and $P_3$, in order to block all regular unbalanced lines.
    Then there exist non-zero multiplicities $m_1, m_2, m_3 \in F^*_p$ so that the union of the points $P_i$ with multiplicity $m_i$ and the set of red-colored points forms a multiset in which every line contains exactly $r$ mod $p$ points (counted with multiplicities).
\end{itemize}
\end{lemma}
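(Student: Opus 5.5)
The plan is to pass to the codeword $c:=c_R-r\underline{j}\in C_1(2,p)$ and apply Result \ref{Res:SzWPrime}. Since $\underline{j}$ is the sum of the characteristic vectors of the $p+1$ lines through a fixed point, $\underline{j}\in C_1(2,p)$, so $c$ is a codeword. By Lemma \ref{lem:codeword connection}, $supp(c)$ consists of the points dual to the regular unbalanced lines. In particular $w(c)\le\delta\le\max\{3p+1,4p-22\}$, and Result \ref{Res:SzWPrime} (with $p\ge 19$) gives two cases: either $c$ is a linear combination of at most three lines, or $supp(c)$ is covered by three concurrent lines. Here "lines" are lines of the dual plane, i.e.\ pencils of lines of $\PG(2,p)$ through a point.

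For (1) I would dualize both cases. If $supp(c)$ is covered by three concurrent dual lines, then every regular unbalanced line passes through one of three collinear points $P_1,P_2,P_3$. If $c=\sum_{i\le 3}m_i\chi_{\ell_i^*}$, then $supp(c)$ lies in the union of at most three dual lines $\ell_i^*$, which are the pencils through points $P_i$. So every regular unbalanced line contains some $P_i$. If $c=0$ there are no regular unbalanced lines and the statement is vacuous. This proves (1).

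For (2), assume three non-collinear points are needed. First I rule out the covering-by-concurrent-lines case: it would give three collinear blocking points, and I would read the hypothesis of (2) as saying no such collinear triple (or fewer points) suffices. Hence $c=m_1\chi_{P_1^*}+m_2\chi_{P_2^*}+m_3\chi_{P_3^*}$ with the dual lines non-concurrent. I then need to check that these $P_i$ are the blocking points and that all $m_i\neq0$. If some $m_i=0$, two points would block, contradicting minimality; the same minimality forces the points coming from the representation to be the given ones, up to harmless relabeling.

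Now the coordinate of $\chi_{P^*}$ at the dual point of a line $L$ equals $1$ exactly when $P\in L$. So for every line $L$,
\[|L\cap R|-r\equiv \sum_{i:\,P_i\in L} m_i \pmod p.\]
Adding the points $P_i$ with multiplicity $-m_i$ therefore gives a multiset meeting every line in $r$ mod $p$ points. Renaming $-m_i$ as $m_i$ gives nonzero multiplicities in $\F_p^*$, as claimed.

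The main obstacle is the bookkeeping in (2). I must make sure that the linear-combination case really yields non-concurrent dual lines with all coefficients nonzero. I must also make sure that the degenerate configurations are absorbed by the minimality hypothesis: concurrent dual lines (the points $P_i$ collinear), or a combination of three lines whose support also happens to fit in three concurrent lines. I would handle this by noting that with three non-concurrent dual lines and nonzero coefficients, the support contains all but at most three points of each dual line. Such a support cannot be covered by three concurrent lines other than the $\ell_i^*$ themselves, so the blocking triple is forced to be $P_1,P_2,P_3$.
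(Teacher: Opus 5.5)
Your proposal is correct and is essentially the paper's proof: you form $c_R-r\underline{j}$, apply Result \ref{Res:SzWPrime} and dualize to get (1), and for (2) you rearrange $c_R-r\underline{j}=\sum m_i\chi_{\ell_i}$ so that the three points enter the multiset with multiplicities $-m_i$. Your extra bookkeeping fills in steps the paper leaves implicit: you check that $\underline{j}\in C_1(2,p)$, and you note that each dual line $\ell_i$ keeps at least $p-1$ support points, so no three lines other than the $\ell_i$ themselves can cover the support, and minimality then forces nonzero coefficients and identifies the given $P_i$.
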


\begin{proof}
For the set of red points $R$, construct the codeword $c_R$ (see Definition~\ref{def: codeword of R}), and define
\[
c^*_R = c_R - r\underline{j},
\]
where $\underline{j}$ is the all-ones vector. By Lemma \ref{lem:codeword connection} and by the assumption of the lemma, we have 
\[
w(c^*_R) \leq \max\{3p+1, 4p-22\}.
\]
Thus, by Result \ref{Res:SzWPrime}, the support of $c^*_R$ can be covered by at most three lines, which immediately implies $(1)$.

Now, assume that exactly three lines $\ell_1, \ell_2, \ell_3$ are required to cover $supp(c^*_R)$ and that these lines are not concurrent. By Result \ref{Res:SzWPrime}, there exist non-zero multiplicities $m^*_1, m^*_2, m^*_3 \in \mathbb{F}^*_p$ such that
\[
c^*_R = \sum_{i=1}^3 m^*_i\chi_{\ell_i}.
\]
Rearranging, we obtain
\[
r\underline{j} = c_R - \sum_{i=1}^3 m^*_i\chi_{\ell_i} = c_R + \sum_{i=1}^3 (p - m^*_i)\chi_{\ell_i}.
\]
and hence $(2)$ follows.
\end{proof}

\begin{proposition}
\label{prop:noncollin3pts}
Suppose that $p>5$ and assume that there exist three non-collinear points $P_1$, $P_2$, $P_3$, so that each regular unbalanced line contains at least one of them. Suppose that there are at most $\max\{3p+1,4p-22\}$ unbalanced lines. Suppose moreover that there exists a multiset $M$ which agrees with $R$ outside $\{P_1,P_2,P_3\}$ and in which every line contains exactly $r$ mod $p$ points (counted with multiplicities).
Then $R\setminus \{P_1, P_2, P_3\}$ is the union of \(r\) lines, each passing through one of the points \(P_i\), with \(P_1,P_2,P_3\) removed; or $r=1$ and $R$ consists of a completely red line together with a subset of $\{P_1,P_2,P_3\}$ outside this line. 
\end{proposition}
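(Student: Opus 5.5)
The approach is to work on the lines through each $P_i$, using the multiset $M$ to turn counts on balanced lines into exact equalities. Write $m_1,m_2,m_3$ for the multiplicities in $M$ of $P_1,P_2,P_3$; they are taken modulo $p$. Every line avoiding $\{P_1,P_2,P_3\}$ is balanced.

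\emph{Step 1: the line $\ell_{ij}=P_iP_j$.} Fix $i$ and a point $Q\notin\{P_1,P_2,P_3\}$. Sum the $M$-counts over the lines through $Q$, working modulo $p$. This gives $|M|+p\,\mu(Q)\equiv (p+1)r$, so $|M|\equiv r$, where $|M|$ is the size of the multiset modulo $p$. Now do the same count through $P_i$: the lines through $P_i$ give
\[
|M| + p\,m_i \equiv \sum_{\ell\ni P_i} |\ell\cap M| \equiv (p+1)r .
\]
This is consistent and so carries no information. Working modulo $p$ alone is therefore too weak, and the main tool has to be exact integer counting on the balanced lines.

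\emph{Step 2: exact counting.} Take a point $Q$ outside $\ell_{12}\cup\ell_{13}\cup\ell_{23}$. Among the lines through $Q$, exactly three pass through some $P_i$ (namely $QP_1,QP_2,QP_3$, which are distinct since $Q$ is off the triangle sides), and the remaining $p-2$ lines are balanced. Summing the red counts over the lines through $Q$ gives
\[
|R|+p\,[Q\in R] = (p-2)r+\sum_{i=1}^3|QP_i\cap R| .
\]
Hence $\sum_i |QP_i\cap R| - p[Q\in R]$ is independent of $Q$.

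The key sub-step is to show that, for each $i$, the lines through $P_i$ other than the two triangle sides are, apart from $P_i$, monochromatic. I would argue as in the proof of Theorem \ref{thm:not bl and non r mod p}. Take a non-side line $m$ through $P_1$ carrying both a red point $X$ and a blue point $Y$. Compare the identity above at $X$ and at $Y$; in both cases the line towards $P_1$ is the same line $m$. This relates $|XP_2\cap R|+|XP_3\cap R|$ to $|YP_2\cap R|+|YP_3\cap R|$ with a difference of exactly $p$. The sums lie in $[0,2p+2]$, so the difference forces extreme configurations: lines through $P_2$ or $P_3$ that are fully red or fully blue. The bound on the total number of unbalanced lines, at most $\max\{3p+1,4p-22\}<3(p+1)+p$, should then exclude too many such extreme lines. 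I expect this case analysis to be the main obstacle, since unlike the two-point case both $P_2$ and $P_3$ can absorb the discrepancy. To handle it I would first count how many lines through each $P_i$ are unbalanced, using that every unbalanced line meets $\{P_1,P_2,P_3\}$.

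\emph{Step 3: conclusion.} Once each non-side line through each $P_i$ is known to be monochromatic off the $P_j$, let $a_i$ be the number of red such lines through $P_i$. Consider a line through none of the $P_i$. It meets each pencil in one point, and its red count must be $r$. If some pencil has mixed colors, points on the same side line would be forced to have compatible colors. I would show that only one index $i$ has $0<a_i<p-1$, or else that the red lines through different $P_i$ together account for exactly $r$ red points on every generic line. In either case $R\setminus\{P_1,P_2,P_3\}$ is a union of $r$ lines through the $P_i$.

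The remaining possibility is that the triangle sides themselves carry red points that are not explained by lines through the $P_i$. Then checking generic lines forces a fully red side together with $r=1$, which is the exceptional case in the statement. Finally I would use $p>5$ to rule out small coincidences in the counting.
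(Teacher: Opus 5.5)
Your proposal has a genuine gap. In Step 1 you test $M$ only through the global count, conclude that it carries no information, and never use it again. But $M$ is the key tool in its \emph{local} form.

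Put $R^*=R\setminus\{P_1,P_2,P_3\}$. A line through $P_i$ other than the two sides at $P_i$ contains no other $P_j$. Since $M$ agrees with $R$ off $\{P_1,P_2,P_3\}$, all these lines meet $R^*$ in the same residue $r-m_i \pmod p$. Now do your own exact count for $R^*$ at a point $P\notin R^*$ and a point $Q\in R^*$, both off the sides. It gives
\[
\sum_{i=1}^{3}\bigl(|QP_i\cap R^*|-|PP_i\cap R^*|\bigr)=p ,
\]
where each difference is a multiple of $p$ lying in $[-p,p]$. Hence some $QP_j$ meets $R^*$ in $p$ points, and by the congruence every non-side line through $P_j$ is full or empty. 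Say $j=1$, with $t$ full lines. A non-side line through $P_3$ then meets $R^*$ in $t$ or $t+1$ points, according to whether its point on $P_1P_2$ is red. The congruence at $P_3$ therefore forces $P_1P_2\setminus\{P_1,P_2\}$ to be monochromatic, and likewise for $P_1P_3$. Generic lines then give exactly $r$ full lines. (The point $P$ exists because $p>5$. If no red point lies off the sides, the same congruence at each $P_k$ shows that $R^*$ is a union of sides.)

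Without this congruence your Step 2 does not go through: a difference of $p$ between two sums in $[0,2p+2]$ forces nothing, e.g.\ $p+2$ versus $2$. Nor can the bound on unbalanced lines replace it. In the main case every unbalanced line passes through some $P_i$, and only $3p$ lines do, so the bound $\max\{3p+1,4p-22\}$ holds automatically and excludes nothing.

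Two further points are wrong.
\begin{itemize}
\item Your key sub-step, that the non-side lines through \emph{every} $P_i$ are monochromatic, is false. If $R^*$ consists of $r$ non-side lines through $P_1$, every non-side line through $P_2$ contains exactly $r$ points of $R^*$. Only one pencil becomes monochromatic, so Step 3 starts from a false premise.
\item Your opening claim that every line avoiding $P_1,P_2,P_3$ is balanced holds only for \emph{regular} unbalanced lines. For $r=1$ a completely red line is unbalanced but not regular, and it may avoid all $P_i$; take $R=\ell\cup\{P_1\}$. This is the exceptional conclusion of the statement. It is not a fully red side: a side already passes through some $P_i$ and so falls under the first conclusion.
\end{itemize}

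These exceptional cases are exactly where the bound on $\delta$ is needed. By Lemma \ref{lem:exceptional}, there is no non-regular unbalanced line when $r=0$. When $r=1$ and $\ell$ is completely red, there are at most three red points off $\ell$. Each of them must be some $P_i$: otherwise all $p+1$ lines through it would be regular unbalanced, while at most three of them meet $\{P_1,P_2,P_3\}$.
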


\begin{proof}
Let $R^* = R\setminus \{P_1, P_2, P_3\}$.
Let $m_i$ denote the multiplicity of $P_i$ in $M$. First we get rid of the exceptional cases.

If $r=0$, then by Lemma~\ref{lem:exceptional}(1), there is no non-regular unbalanced line. If there were a red point $P\notin\{P_1,P_2,P_3\}$, all $p+1$ lines through $P$ would be regular unbalanced, but at most three of them could contain one of the points $P_i$, a contradiction. Thus $R^*=\emptyset$, giving the first conclusion.

If $r=1$ and there is a completely red line $\ell$, Lemma~\ref{lem:exceptional} shows that there are at most three red points outside $\ell$. In particular, $\ell$ is the unique completely red line. If a red point $P$ outside $\ell$ were not one of the $P_i$, then all $p+1$ lines through $P$ would be regular unbalanced and would have to contain one of the three points $P_i$, again a contradiction. Hence the second conclusion holds.

We may therefore assume that $r>0$ and that every unbalanced line is regular.

First, we will show that through one of the points $P_i$, $1 \leq i \leq 3$, say through $P_j$, there passes a line $\ell$ such that either all points of $\ell \setminus \{P_j\}$ are in $R^*$ or none.
Let $P$ be a point not in $R^*$ and not on the lines $P_iP_j$, for $1 \leq i \neq j \leq 3$. 
Such a point exists, since otherwise a line avoiding $P_1,P_2,P_3$ would contain at least $p-2 > (p+1)/2 \geq r$ (when $p>5$) red points and hence would be a regular unbalanced line avoiding all three points.
Also, let $Q$ be a point of $R^*$ not on the lines $P_iP_j$.
If no such point exists, then the lines through the points $P_i$ show that each side of the triangle without the vertices is either contained in $R^*$ or disjoint from $R^*$. Thus $R^*$ is the union of $t$ sides without vertices. A line avoiding $P_1,P_2,P_3$ is balanced, so $t=r$, and the first conclusion follows. Now, consider counting the points of $R^*$ once on the lines through $P$ and also on the lines through $Q$. Note that a line not passing through any $P_i$ is balanced and hence contains exactly $r$ points of $R$ and hence of $R^*$ too, and so we have

\begin{align}
(p-2)r &+ |PP_1 \cap R^*| + |PP_2 \cap R^*| + |PP_3 \cap R^*| \notag \\
&= 1 + (p-2)(r-1) + |QP_1 \cap R^*| - 1 + |QP_2 \cap R^*| - 1 + |QP_3 \cap R^*| - 1.
\end{align}

From this, we obtain:

\begin{equation}
\label{eq:3pts}
p + |PP_1 \cap R^*| + |PP_2 \cap R^*| + |PP_3 \cap R^*| = |QP_1 \cap R^*| + |QP_2 \cap R^*| + |QP_3 \cap R^*|.
\end{equation}

The multiset $M$ intersects each line in $r \pmod{p}$ points, and hence:

\[
|PP_i \cap R^*| \equiv |QP_i \cap R^*| \pmod{p}.
\]

Since $R^*$ is a set, equation \eqref{eq:3pts} can only be true if for some $1 \leq j \leq 3$,

\[
\left| |PP_j \cap R^*| - |QP_j \cap R^*| \right| = p,
\]

which can only happen if one of $|PP_j \cap R^*|$ or $|QP_j \cap R^*|$ is $0$ and the other is $p$. Hence, we have found the required line $\ell$ through the point $P_j$.

Without loss of generality, we may assume that through $P_1$ there passes a line $\ell$, such that $|\ell \cap R^*| = p$. Since $M$ intersects every line in $r$ mod $p$ points, and all these lines have the same contribution at $P_1$ in $M$, this implies that any line through $P_1$ and not passing through $P_2$ or $P_3$ must contain either $0$ or $p$ points from $R^*$. 

Assume that there are $k$ lines through $P_1$ containing $p$ points from $R^*$. To complete the proof, we only need to show that on the line $P_1P_2$ (and similarly on $P_1P_3$), there are either $0$ or $p-1$ points of $R^*$. 

Consider the lines through $P_3$ which do not pass through $P_2$ or $P_1$. From above, such a line intersects $R^*$ in either $k+1$ or $k$ points, depending on whether the intersection with the line $P_1P_2$ is in $R^*$ or not. 
However, since $M$ contains $r$ points modulo $p$ on each line, it follows that these lines intersect the set $R^*$ in the same number of points, namely in $(r - m_3)$ modulo $p$ points.

Therefore, all points of $P_1P_2 \setminus \{P_1, P_2\}$ belong to $R^*$ simultaneously, or none of them do. 
By interchanging the roles of $P_2$ and $P_3$, the same holds for $P_1P_3$. Thus $R^*$ is the union of $t$ lines through $P_1$, with $P_1,P_2,P_3$ removed. Any line avoiding $P_1,P_2,P_3$ is balanced and meets $R^*$ in exactly $t$ points, so $t=r$, and the proof follows.
\end{proof}

Finally, we are ready to state our main theorem of this section.

\begin{theorem}
\label{thm:prime max}
Consider a coloring of the points of ${\rm PG}(2, p)$, where $p \geq 19$, with red and blue. A line is called balanced if it contains exactly $0\leq r\leq b=p+1-r$ red points. Suppose that there are at most $\max\{3p+1, 4p-22\}$ unbalanced lines. 

Then one of the following holds:
\begin{itemize}
    \item[(1)] There is a point \(P\) such that, apart from \(P\), the points on \(r\) lines through \(P\) are red and the points on the remaining \(b=p+1-r\) lines are blue. The point \(P\) can have either color.
    \item[(2)] A coloring obtained from (1) by switching the colors of at most two points different from \(P\).
    \item[(3)] \(r=1\), and the red points consist of a line together with at most three points outside it.
    \item[(4)]  Either \(r=(p-1)/2\) and the red point set is, up to an affine transformation, equivalent to \(S\) from Example \ref{Ex:KissSomlaiExample} together with a subset of the directions \(\{(0),(1),(\infty)\}\); or \(r=b=(p+1)/2\) and one of the two color classes is equivalent to the complement of \(S\) in \(\mathrm{AG}(2,p)\), together with a subset of these three directions.
    \item[(5)] \(r=3\), and the red point set is obtained from the union of the three sides of a triangle by possibly switching the colors of its three vertices.
\end{itemize}

\end{theorem}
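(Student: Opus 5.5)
We first dispose of the exceptional values of $r$ using Lemma \ref{lem:exceptional}. If $r=0$, then part (1) of that lemma with $k=\min\{|R|,p+1\}$ forces $|R|\le 3$; otherwise $\delta\ge 4(p+1)-6>\max\{3p+1,4p-22\}$. Moreover, no non-regular unbalanced line can occur. A set of at most three red points with $r=0$ is exactly case (1) with $r=0$ (all lines through $P$ blue) after switching at most two points, or it is covered by case (2) (three red points: choose $P$ to be one of them). Similarly, if $r=1$ and there is a completely red line, then part (2) gives at most three red points outside that line, which is case (3). From now on every unbalanced line is regular, so Lemma \ref{lem: at most 3 pts} applies.

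By Lemma \ref{lem: at most 3 pts}(1), all unbalanced lines are blocked by at most three points $P_1,\dots,P_k$. We split according to their configuration.

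\emph{Case A: the $P_i$ are collinear on a line $\ell$ (this includes $k\le 2$).} Every unbalanced line other than $\ell$ meets $\ell$ in one of $P_1,\dots,P_k$, so Proposition \ref{prop:affinecase} applies. Its conclusion (2) gives case (4); here we must check which values of $r$ actually occur for $S$ and its complement. A line of non-special direction meets $S$ in $(p-1)/2$ points, and possibly contains one of the added ideal points, which adjusts the count to $(p-1)/2$ or $(p+1)/2$ consistently with $r\le b$. Its conclusion (1) says that each colour class is $r$ (resp. $b$) concurrent lines through some $P_i$, minus the centre, together with a subset of $\{P_1,P_2,P_3\}$. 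This is case (1) when the extra points lie only at the centre, and case (2) otherwise, since at most two further $P_j$ have their colours switched.

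\emph{Case B: three non-collinear points are needed.} Lemma \ref{lem: at most 3 pts}(2) provides the multiset $M$, so Proposition \ref{prop:noncollin3pts} applies. Its second alternative is case (3). Otherwise $R\setminus\{P_1,P_2,P_3\}$ is a union of $r$ lines, each through some $P_i$, with the $P_i$ removed. We then count the unbalanced lines determined by how these $r$ lines distribute among $P_1,P_2,P_3$. If all $r$ lines pass through one $P_i$, we are in case (1) or (2), since the two other vertices can be switched. If the lines use two or three different centres, then a generic line through a point lying on two of the red lines, or avoiding the configuration, yields many unbalanced lines. We will show that a line through $P_1$ not in the red pencil meets the red lines through $P_2$ and $P_3$ in $r_2+r_3$ points, and hence is unbalanced unless $r_1=0$. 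Counting these pencils gives roughly $(p+1-r_1)+(p+1-r_2)+(p+1-r_3)$ unbalanced lines, minus the corrections coming from the sides of the triangle.

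The main obstacle is this final count. We must show that with at least two centres used, the bound $\max\{3p+1,4p-22\}$ forces each centre to carry exactly one line, and that these lines are the sides of the triangle (each side passes through two of the $P_i$). That situation gives $r=3$ and case (5). The alternative of a single line through each of two centres together with other lines must be excluded. We would handle this by directly counting the unbalanced lines through each $P_i$: a line through $P_i$ is balanced only if the red lines through the other centres contribute exactly $r$ points. Using $r\le (p+1)/2$, this gives at least $p+1-r_i-2$ unbalanced lines per centre and pushes the total above $4p-22$ unless every $r_i\le 1$.
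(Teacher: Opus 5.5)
Your handling of $r=0$, of $r=1$ with a completely red line, and of the collinear case via Proposition \ref{prop:affinecase} matches the paper. (A small correction: a line of non-special direction contains none of the added ideal points. It therefore meets $S$ in exactly $(p-1)/2$ points and the complement of $S$ in $(p+1)/2$, and $r\le b$ decides which colour class is which.)

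The gap is in Case B. Your plan is to use the bound on $\delta$ to force the $r$ lines to be concurrent or to be the three sides, by counting unbalanced lines through $P_1,P_2,P_3$. This cannot produce a contradiction. Three non-collinear points lie on only $3(p+1)-3=3p$ lines in total, and $3p<\max\{3p+1,4p-22\}$. Every line through some $P_i$ is allowed to be unbalanced, so your per-centre estimates $p-1-r_i$ sum to only $3p-3-r$ and never exceed the bound. Likewise, showing that lines through $P_1$ outside its red pencil are unbalanced proves nothing. Even the outcome you aim for, ``every $r_i\le 1$'', would not separate the three sides of the triangle from three generic lines, one through each $P_i$.

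The missing idea is to use the lines \emph{avoiding} $P_1,P_2,P_3$, all of which are balanced. Suppose two of the $r$ red lines met in a point $X\notin\{P_1,P_2,P_3\}$. At least $p-2$ lines through $X$ miss every $P_i$, and none of them is a red line. Each such line meets $R$ only in points of $R\setminus\{P_1,P_2,P_3\}$, hence in at most $r-1$ points. It is then an unbalanced line not blocked by $\{P_1,P_2,P_3\}$, a contradiction. So all pairwise intersections of the red lines lie in $\{P_1,P_2,P_3\}$.

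If the lines are not all concurrent at one $P_i$, two of them meet at, say, $P_1$. A third line not through $P_1$ must then meet them at $P_2$ and $P_3$, and any further line would repeat one of these. This forces the three sides and $r=3$, which is case (5). Otherwise all lines pass through one $P_i$ and you are in (1) or (2). The paper obtains the same dichotomy by reading it off the proof of Proposition \ref{prop:noncollin3pts}. There $R\setminus\{P_1,P_2,P_3\}$ turns out to be either a union of sides of the triangle or a union of lines through the single point $P_1$.
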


\begin{proof}
Consider the set of red points $R$.
First suppose that $r=0$. If $|R|\geq4$, Lemma~\ref{lem:exceptional} gives
\[
\delta\geq4(p+1)-\binom42=4p-2>\max\{3p+1,4p-22\},
\]
a contradiction. Hence $|R|\leq3$, which is contained in cases (1) and (2).

Next suppose that $r=1$ and there is a completely red line $\ell$. If there were at least four red points outside $\ell$, Lemma~\ref{lem:exceptional} would give
\[
\delta\geq1+4(p+1)-\binom42=4p-1>\max\{3p+1,4p-22\}.
\]
Thus this gives case (3). We may therefore assume that $r>0$ and that every unbalanced line is regular.

By Lemma \ref{lem: at most 3 pts} (1), the regular unbalanced lines can be covered by at most three points, say by the set of points $B$. Choose $B$ minimal with this property.
If there is a line containing all points of $B$, then Proposition \ref{prop:affinecase} completes the proof. Its first case gives (1) or (2), while its second case, using $r\leq b$, gives (4).
Otherwise, $B$ consists of three non-collinear points, in which case Proposition \ref{prop:noncollin3pts}, together with Lemma \ref{lem: at most 3 pts} $(2)$ completes the proof. The proof of Proposition \ref{prop:noncollin3pts} shows that its first conclusion gives (1) or (2), except when $r=3$ and the three lines are the sides of a triangle, which gives (5). Its second conclusion gives (3).
\end{proof}
\subsection{More unbalanced lines with some additional restrictions}
\label{subs: more unbalanced with restr}
\subsubsection{Preliminaries}

We recall the stability result of $ k \bmod p $ sets of \cite{SzW}, translated into the coloring terminology used above. The results below are for general $q$.
Throughout these preliminary results, $\delta$ denotes the number of regular unbalanced lines; this number is at most the total number of unbalanced lines.

\begin{result}[{\cite[Theorem 1.1]{SzW}}]
\label{primelargeindpts}
Color the points of $\PG(2, q)$, $q=p^h$ red and blue, $q > 17$ and assume that the number of regular unbalanced lines, $\delta$,
is smaller than $(q+1) {\sqrt \frac{q}{2}} $. Then there 
exists a point set \( L \), of size $ \lceil \frac{\delta}{q+1} \rceil $, such that every regular unbalanced line contains at least one point of 
$L$. Moreover, by \cite[Remark 3.3]{SzW}, $L$ can be chosen so that each $P\in L$ lies on at least
\[
q+1-\left(\frac{\delta}{q+1}+\frac{2\delta^2}{(q+1)^3}\right)
\]
regular unbalanced lines.
\end{result}

From this result, we get the following:

\begin{corollary}
\label{numofbadlinesthrpt}
Let $P$ be a point of $\PG(2, q)$, $ q > 17 $. 
If $P\notin L$, then at most $|L|$ regular unbalanced lines pass through $P$. In particular, if every unbalanced line is regular, then the number of balanced lines through $P$ is at least $q+1-|L|$.
\end{corollary}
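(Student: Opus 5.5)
The plan is a direct double count through $P$, using that every line through $P$ meets $L$ in at most one point.

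Fix $P\notin L$ and let $\ell$ be a regular unbalanced line through $P$. By Result \ref{primelargeindpts}, $\ell$ contains at least one point of $L$. Since $P\notin L$, two distinct lines through $P$ meet only in $P$, so no point of $L$ lies on two different lines through $P$. Assigning to each regular unbalanced line through $P$ a point of $L$ on it therefore gives an injective map from the set of regular unbalanced lines through $P$ into $L$. Hence there are at most $|L|$ such lines.

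For the second statement, assume every unbalanced line is regular. Then the unbalanced lines through $P$ are exactly the regular unbalanced lines through $P$, so there are at most $|L|$ of them. There are $q+1$ lines through $P$, so at least $q+1-|L|$ of them are balanced.

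The only point needing care is the hypothesis: Result \ref{primelargeindpts} applies only when $\delta<(q+1)\sqrt{q/2}$. I read the corollary as stated under that standing assumption, which is the setting of this subsection. With that in place there is no real obstacle; the whole content is the injectivity remark above.
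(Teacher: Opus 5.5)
Your argument is correct and is the immediate consequence the paper intends; the paper states the corollary without proof right after Result~\ref{primelargeindpts}. Each regular unbalanced line through $P\notin L$ contains a point of $L\setminus\{P\}$, and distinct lines through $P$ share only $P$, which gives an injection into $L$; your remark that the standing hypothesis $\delta<(q+1)\sqrt{q/2}$ is needed for $L$ to exist is also apt.
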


\begin{property}[{\cite[Property 3.5]{SzW}}]
\label{propkmodp}
Color the points of $ \PG(2, q) $ red and blue. We say that 
the property holds if whenever more than $ q/2 $ 
regular unbalanced lines pass through a point, there exists an integer $ m $ such that on 
more than $ 2 \frac{\delta}{q+1} + 5 $ of these lines the number of red points is congruent to
$ m \bmod p $.
\end{property}

\begin{result}[{\cite[Theorem 3.6]{SzW}}]
\label{stab} 
Color the points of $ \PG(2, q) $,  $ q > 17 $, red and blue. Assume 
$ \delta < (\lfloor \sqrt q\rfloor + 1)(q + 1 - \lfloor \sqrt q\rfloor ) $. 
Assume furthermore that Property \ref{propkmodp} holds. Then there 
exists a multiset $ R^* $ such that every line intersects $ R^* $ in 
$ r \bmod p $ points (counting with multiplicity) and the number of points 
of $ \PG(2, q) $ which have different multiplicities in $R$ than in $R^*$
is $ \lceil \frac{\delta}{q+1}\rceil $.
\end{result}

\subsubsection{The number of unbalanced lines is $O(p\sqrt{p})$ when $r$ is either small or large}

In this subsection, our aim is to prove the following theorem, showing that when $r$ is small (or large) and the number of unbalanced lines is moderately bounded, the coloring is close to a trivial one.

\begin{theorem}
\label{thm:mainprime}
Color the points of $\PG(2,p)$, $p$ prime,  red and blue. We say that a line is balanced if it contains exactly $r$ red points.
Suppose that $0\leq r<c\sqrt p$ and the number of unbalanced lines, $\delta$, is less than $d\sqrt p(p+1) $, where $c, d > 0$ and $8d(c+d)  + \frac{20c+28d}{\sqrt p} + \frac{19}{p}< 1$ hold.
Then we can change the color of at most $\lceil d\sqrt{p}\rceil$ points so that the coloring we obtain 
contains $r$ completely red lines and all other points are blue. Furthermore, the size of the union of the set of recolored points and the intersection points of the $r$ red lines is at most $\lceil d\sqrt{p}\rceil$.
\end{theorem}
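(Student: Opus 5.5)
We plan to reduce to the stability machinery of \cite{SzW}: Result \ref{primelargeindpts}, Property \ref{propkmodp} and Result \ref{stab}. First we dispose of the exceptional cases of Lemma \ref{lem:exceptional}. If $r=0$, every line through a red point is unbalanced, so by Lemma \ref{lem:exceptional}(1) the number $k$ of red points satisfies $k(p+1)-\binom{k}{2}\le \delta<d\sqrt p(p+1)$. This gives $|R|\le \lceil d\sqrt p\rceil$, and recoloring these points blue gives the claim with no red lines. If $r=1$ and a completely red line exists, Lemma \ref{lem:exceptional}(2) bounds the red points outside it in the same way. Otherwise every unbalanced line is regular, so $\delta$ counts regular unbalanced lines and $d\sqrt p<\sqrt{p/2}$ since $d<1/8$. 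Result \ref{primelargeindpts} then yields a set $L$ with $|L|=\lceil\delta/(p+1)\rceil\le\lceil d\sqrt p\rceil$ blocking all unbalanced lines, and each $P\in L$ lies on at least $p+1-(d\sqrt p+2d^2p)$ unbalanced lines.

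Second, we verify Property \ref{propkmodp}. Take a point $P$ on more than $p/2$ unbalanced lines. By Corollary \ref{numofbadlinesthrpt}, $P\in L$. We count the red points along the lines through $P$. A balanced line contributes $r<c\sqrt p$ red points. An unbalanced line through $P$ is blocked only by $P$ or by points of $L$, so we compare it with lines through another point $Q\notin L$, which sees at least $p+1-|L|$ balanced lines. This gives $|R|\le (p+1)r+|L|(p+1)$, roughly $(c+d)\sqrt p\,(p+1)$. Hence the unbalanced lines through $P$ carry on average $O((c+d)\sqrt p)$ red points. Since there are more than $p/2$ such lines, pigeonholing their red counts among residues $0,\dots,O((c+d)\sqrt p)$ shows that more than $2\delta/(p+1)+5\approx 2d\sqrt p+5$ of them share a residue mod $p$. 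This is where $8d(c+d)+\dots<1$ enters: we need roughly $(p/2)\big/\big(4(c+d)\sqrt p\big)>2d\sqrt p+5$, with the error terms $20c/\sqrt p$, $28d/\sqrt p$ and $19/p$ absorbing the ceilings. Checking this Markov/pigeonhole inequality carefully is the step I expect to be the main obstacle. Most lines through $P$ must have few red points, so I would first count the lines with at most $4(c+d)\sqrt p$ red points and then pigeonhole only among those.

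Third, we apply Result \ref{stab}, whose hypothesis on $\delta$ holds since $d\sqrt p(p+1)<(\lfloor\sqrt p\rfloor+1)(p+1-\lfloor\sqrt p\rfloor)$. It gives a multiset $R^*$ meeting every line in $r\bmod p$ points and differing from $R$ in at most $\lceil d\sqrt p\rceil$ points, which we may take inside $L$. Because $p$ is prime, $R^*$ lies in the code; equivalently, $c_{R^*}=r\underline j$. A line avoiding $L$ is balanced, and there $R^*=R$. From this we show that $R^*$ is (as an $\F_p$-combination) a sum of $r$ lines. Take a point $Q\notin L$ that is blue and lies on no line joining two points of $L$. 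All lines through $Q$ except at most $|L|$ are balanced, with $r$ red points. We count red points globally as in the proof of Proposition \ref{prop:noncollin3pts}: $|R\setminus L|$ is about $r(p+1)$ up to $O(|L|\cdot p)$ terms.

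Fourth, to obtain the line structure, we pick a red point $X\notin L$. Comparing the counts through a blue point and through $X$, as in the derivation of \eqref{eq:3pts}, forces the lines through $X$ to have red counts summing to $p+r\cdot(\text{const})$. Since balanced lines carry only $r<c\sqrt p$ red points and there are at most $|L|$ unbalanced lines through $X$, one line through $X$ must contain about $p-O((c+d)\sqrt p)$ red points. Its residue mod $p$ in $R^*$, together with the congruence condition, then forces it to be completely red outside $L$. Removing such full lines one at a time and repeating (so that $r$ drops) yields $r$ red lines, and the points of $R$ not on them lie in $L$. Thus the recolored points and the intersections of the red lines both lie in $L$, which has at most $\lceil d\sqrt p\rceil$ points. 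Intersection points of red lines outside $L$ would create lines with the wrong count, so the final union bound holds.
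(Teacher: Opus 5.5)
There is a genuine gap in your Steps 3 and 4. This is exactly where the mod $p$ multiset $R^*$ has to be turned into $r$ actual lines.

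Knowing that every line meets $R^*$ in $r \bmod p$ points (equivalently, that $R^*$ is a codeword) gives no integer information. Your substitute argument does not supply it:

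\begin{itemize}
\item Comparing a red point $X\notin L$ with a blue point only shows that the at most $|L|$ unbalanced lines through $X$ together carry roughly $p$ extra red points. So some line through $X$ has about $p/|L|$ extra red points, not $p-O((c+d)\sqrt p)$ red points.
\item Even for a heavy line, the congruence $|\ell\cap R^*|\equiv r \pmod p$ cannot force it to be red outside $L$ while the multiplicities of $R^*$ on $L$ are uncontrolled.
\item The peeling induction on $r$ would require re-verifying all hypotheses for the modified coloring (the bound on $\delta$, the set $L$, Property \ref{propkmodp}). You do not do this.
\end{itemize}

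The missing idea is an integer lift of $R^*$, which the paper carries out in three steps.

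\begin{enumerate}
\item[(i)] For $P\in L$ with $0\le m_P<p$, either $m_P\le r$ or $m_P\ge p-|L|$. Otherwise each of the at least $p+2-|L|$ lines through $P$ avoiding $L\setminus\{P\}$ carries exactly $p+r-m_P>|L|+r$ red points besides $P$. This contradicts $|R|\le(r+|L|)(p+1)$.
\item[(ii)] With representatives chosen in $[-|L|,r]$, every line meets $R^*$ in exactly $r$ or $p+r$ points, as integers.
\item[(iii)] Some point of multiplicity $0$ lies on no $(p+r)$-secant. Otherwise the blue points outside $L$ on a balanced line would lie on distinct secants, each with more than $7p/8$ red points, giving far too many red points.
\end{enumerate}

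Counting through that point gives $|R^*|=r(p+1)$. Counting through an arbitrary point then shows that its multiplicity equals the number of $(p+r)$-secants through it, so $R^*$ is the sum of $r$ lines. The lines are distinct, and their intersections and all recolored points lie in $L$, because $R$ is a set agreeing with $R^*$ off $L$ and $|L|<p+1$.

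A second, fixable problem is your verification of Property \ref{propkmodp}. Pigeonholing over only $p/2$ lines through $P$ needs roughly $\sqrt p/\bigl(8(c+d)\bigr)>2d\sqrt p+5$, i.e.\ about $16d(c+d)<1$, and the hypothesis does not give this. The hypothesis is calibrated to using all $p+1$ lines through $P\in L$. If $M$ is the largest number of them with a common red count, the sorting bound gives $M\ge(p+1)^2/(2|R|+p+1)>(p+1)/\bigl(4(c\sqrt p+\lceil d\sqrt p\rceil)\bigr)>2\delta/(p+1)+5$. Since $M$ also exceeds the number $A<\delta/(p+1)+1/4$ of balanced lines through $P$, the common count is not $r$, so these lines are unbalanced.

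Minor points:
\begin{itemize}
\item $d<1/8$ does not follow from the hypothesis; only $8d^2<1$ does, which still gives $d<1/\sqrt2$.
\item The correct error term is $2\delta^2/(p+1)^3<2d^2$, not $2d^2p$.
\item You must either prove that the set $S$ where $R$ and $R^*$ differ is exactly $L$, or work with $S$ in place of $L$. The paper proves it: each point of $L$ lies on more than $|S|$ unbalanced lines, forcing $L\subseteq S$, and hence $L=S$ by cardinality.
\end{itemize}
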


\begin{remark}
\label{rm: constants}
Note that the assumption 
$c, d > 0$,
and 
$8d(c+d)  + \frac{20c+28d}{\sqrt p} + \frac{19}{p}< 1$ immediately implies that $8d^2 < 1$, also $p > 19$; and hence Result \ref{stab} can be applied once Property~\ref{propkmodp} has been verified. Also note that $8cd < 1$. Furthermore, since $\frac{20c+28d}{\sqrt p} < 1$, we also see that $\frac{20c+20d}{\sqrt p} < 1$ and so $20(c+d) < \sqrt p$.
\end{remark}

\begin{remark}
Note that $c$ and $d$ do not have to be constants. For example, when $c = C\sqrt p$ and  $d = D / \sqrt p$ and $C, D$ are constants and $p$ is large enough, we need that $8CD + 20C < 1$. In this case Theorem \ref{thm:mainprime} says that there are $r$ almost completely red lines.
If these lines are not concurrent, we see immediately at least $r$ intersection points. If now $r>\lceil D\rceil$ then Theorem \ref{thm:mainprime} yields that these $r$ lines must be concurrent. Hence the coloring is the trivial coloring with changing the colors of at most $\lceil D\rceil$ points.
\end{remark}

\textbf{Assumption.} In the lemmas below we assume, in addition to the hypotheses of Theorem \ref{thm:mainprime}, that $r>0$ and every unbalanced line is regular.

The excluded cases will be handled separately in the proof of the theorem.

\begin{lemma}
\label{lem:set L}
There exists a set of points, $L$, such that each unbalanced line contains at least one point from $L$ and 
\[|L| = \lceil \frac{\delta}{p+1} \rceil \leq \lceil d\sqrt p \rceil. \]
Moreover, $L$ can be chosen so that every $P\in L$ lies on at least
\[
p+1-\left(\frac{\delta}{p+1}+\frac{2\delta^2}{(p+1)^3}\right)
\]
unbalanced lines.
\end{lemma}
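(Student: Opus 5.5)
This lemma is essentially a direct specialisation of Result \ref{primelargeindpts} to $q=p$, so the plan is to verify its hypotheses from the assumptions of Theorem \ref{thm:mainprime} and then translate the conclusion.

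\textbf{Step 1: regular unbalanced lines.} Under the standing Assumption every unbalanced line is regular. Hence the number $\delta$ of unbalanced lines equals the number of regular unbalanced lines used in Result \ref{primelargeindpts}.

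\textbf{Step 2: the hypotheses of Result \ref{primelargeindpts}.} Two conditions must be checked.
\begin{itemize}
\item By Remark \ref{rm: constants}, the inequality on $c,d$ forces $p>19$, so in particular $p>17$.
\item We need $\delta<(p+1)\sqrt{p/2}$. We have $\delta<d\sqrt p(p+1)$, so it suffices that $d\le 1/\sqrt2$. This holds because $8d^2<1$ (again Remark \ref{rm: constants}), which gives $d<1/(2\sqrt2)<1/\sqrt2$.
\end{itemize}

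\textbf{Step 3: apply the result.} Result \ref{primelargeindpts} now yields a set $L$ of size $\lceil \delta/(p+1)\rceil$ such that every regular (hence every) unbalanced line meets $L$. By \cite[Remark 3.3]{SzW}, $L$ can be chosen so that each $P\in L$ lies on at least
\[
p+1-\left(\frac{\delta}{p+1}+\frac{2\delta^2}{(p+1)^3}\right)
\]
regular unbalanced lines. These are unbalanced lines, which is the stated degree bound.

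\textbf{Step 4: the size bound.} Since $\delta/(p+1)<d\sqrt p$, monotonicity of the ceiling gives $\lceil\delta/(p+1)\rceil\le\lceil d\sqrt p\rceil$.

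The only step requiring any care is the constant check $\delta<(p+1)\sqrt{p/2}$ in Step 2. It follows immediately from $8d^2<1$, so I expect no real obstacle.
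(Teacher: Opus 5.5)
Your proposal is correct and follows the paper's intended route: the paper states this lemma without a separate proof, as a direct specialisation of Result~\ref{primelargeindpts} to $q=p$ under the standing assumption that every unbalanced line is regular. Your hypothesis checks ($p>19$ and $\delta<d\sqrt p\,(p+1)<(p+1)\sqrt{p/2}$ via $8d^2<1$) are exactly what is needed, and they match the estimates the paper itself uses in the proof of Lemma~\ref{lem:rstar}.
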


\begin{lemma}
\label{numofredpts}
For the set of red points $R$ in $\PG(2,p) $ we have \[ |R| \leq (r+ |L|)(p+1) \leq 
(r+\lceil d\sqrt p \rceil)(p+1). \]
\end{lemma}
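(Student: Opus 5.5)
We prove the bound $|R|\le (r+|L|)(p+1)$ by a direct counting argument around a single well-chosen point outside $L$. The set $L$ is the one from Lemma~\ref{lem:set L}. Recall that under the standing assumption every unbalanced line is regular, and every unbalanced line meets $L$.

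\textbf{Choosing the point.} Since $|L|\le \lceil d\sqrt p\rceil$ is much smaller than $p$, we can pick a point $P\notin L$. By Corollary~\ref{numofbadlinesthrpt}, at most $|L|$ unbalanced lines pass through $P$. Indeed, each unbalanced line through $P$ meets $L$ in a point different from $P$. These intersection points are distinct for distinct lines through $P$, since two lines through $P$ meet only in $P$.

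\textbf{Counting red points.} The $p+1$ lines through $P$ partition the points other than $P$, so
\[
|R|\le [P\in R]+\sum_{m\ni P}|(m\setminus\{P\})\cap R| .
\]
A balanced line through $P$ contains exactly $r$ red points. An unbalanced line through $P$ contains at most $p+1$ red points. Hence
\[
|R|\le (p+1-|L|)\,r+|L|(p+1)\le (r+|L|)(p+1),
\]
where the last step uses $r\ge 0$. The second inequality in the statement then follows from $|L|\le\lceil d\sqrt p\rceil$.

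\textbf{Refinement and the main point of care.} If needed, one can sharpen the count by choosing $P$ blue. Such a point exists, since otherwise every line would be entirely red, and $r<c\sqrt p<p+1$ rules this out. The only real step is guaranteeing that $P\notin L$ exists and that the count of unbalanced lines through $P$ is at most $|L|$, which is exactly Corollary~\ref{numofbadlinesthrpt}. The existence of $P$ is immediate because $|L|<p^2+p+1$.
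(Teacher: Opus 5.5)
Your argument is correct and is essentially the paper's: count the red points on the lines through a point outside $L$, where Corollary~\ref{numofbadlinesthrpt} leaves at most $|L|$ unbalanced lines. The only difference is that the paper takes this point to be red (handling $R\subseteq L$ separately), which gives the slightly sharper count $1+(r-1)(p+1-|L|)+|L|p$; your arbitrary choice avoids the case split and still suffices, so the optional ``refinement'' paragraph can be dropped.
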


\begin{proof}
If every red point belongs to $L$, the assertion is immediate. Otherwise, we count the number of red points on the lines through a red point not in $L$. 
By Corollary \ref{numofbadlinesthrpt}, we count at most 
$ 1 + (r-1)(p+1 - |L|) +|L|p $ of them.
\end{proof}

\begin{lemma}
\label{lotsofsamethrpt}
Through any point of $ L $ there are at least
$ \frac {p+1}{4(c\sqrt p + \lceil d \sqrt p \rceil)}$ lines that contain the same number of red points. 
\end{lemma}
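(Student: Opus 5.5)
Fix a point $P\in L$. By Lemma \ref{lem:set L}, at least
\[
p+1-\Bigl(\frac{\delta}{p+1}+\frac{2\delta^2}{(p+1)^3}\Bigr)
\]
lines through $P$ are unbalanced. I first bound the number of distinct values that $|\ell\cap R|$ can take on these lines, and then use pigeonhole.

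\textbf{Step 1: unbalanced lines through $P$.} Using $\delta<d\sqrt p(p+1)$, their number exceeds $p+1-d\sqrt p-2d^2$. By Remark \ref{rm: constants}, $8d^2<1$ and $20d<\sqrt p$, so this is at least $(p+1)/2$.

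\textbf{Step 2: few possible values.} I control the values of $|\ell\cap R|$ on lines through $P$ by counting $R$ along the pencil of $P$. Lemma \ref{numofredpts} gives
\[
\sum_{\ell\ni P}\bigl(|\ell\cap R|-[P\in R]\bigr)=|R|-[P\in R]\le (r+\lceil d\sqrt p\rceil)(p+1).
\]
By Markov's inequality, at least $(p+1)/2$ lines through $P$ contain at most $2(r+\lceil d\sqrt p\rceil)+1$ red points.

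\textbf{Step 3: pigeonhole.} Each unbalanced line through $P$ is regular, so it contains a number of red points in $\{0,1,\dots,p+1\}$ different from $r$. Intersecting the set from Step 1 with the set from Step 2 should leave a set of size about $(p+1)/4$, accounting for the fact that $c\sqrt p$ and $d\sqrt p$ are small relative to $p$. On these lines the red count lies in $\{0,\dots,2(c\sqrt p+\lceil d\sqrt p\rceil)\}$, which has roughly $2(c\sqrt p+\lceil d\sqrt p\rceil)$ values once $r$ is excluded. Pigeonhole then gives at least
\[
\frac{p+1}{4}\cdot\frac{1}{2(c\sqrt p+\lceil d\sqrt p\rceil)}
\]
lines with a common value. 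This is a factor $2$ short of the claim, so one of the estimates must be tightened.

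\textbf{Main obstacle: recovering the factor $2$.} I would tighten the Markov step. The lines through $P$ have counts $x_\ell=|\ell\cap R\setminus\{P\}|$ with $\sum x_\ell\le (r+|L|)(p+1)$. Take $N=2(r+|L|)$. Then at most $(p+1)/2$ lines have $x_\ell>N$, so at least $(p+1)/2$ lines have values in $\{0,\dots,N\}$, which is $N+1$ values. Pigeonhole now gives $(p+1)/(2(N+1))\approx (p+1)/(4(r+|L|))$, which is exactly the required bound, but only if these lines all count toward the conclusion, i.e.\ if the balanced lines need not be discarded. That is fine: the statement only asks for lines with the same number of red points, not unbalanced ones, so Step 1 becomes unnecessary. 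Finally, absorb the $+1$'s by using $r<c\sqrt p$ strictly and $|L|\le\lceil d\sqrt p\rceil$. I would choose the Markov threshold carefully, e.g.\ $N=\lfloor 2(r+|L|)\rfloor-1$ with a strict inequality, so that the number of values is at most $2(c\sqrt p+\lceil d\sqrt p\rceil)$.
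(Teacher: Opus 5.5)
Your final argument is correct. Use $\sum_{\ell\ni P}x_\ell\le|R|\le(r+|L|)(p+1)$ and apply Markov at threshold $2(r+|L|)$, which is positive since $r>0$. This leaves at least $(p+1)/2$ lines with $x_\ell\in\{0,\dots,2(r+|L|)-1\}$, and pigeonhole gives at least $(p+1)/(4(r+|L|))$ of them with a common value. That exceeds the claimed bound because $r<c\sqrt p$ and $|L|\le\lceil d\sqrt p\rceil$. Steps 1--3 are unnecessary and should be dropped.

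This is essentially the paper's argument. The paper uses the same pencil count with Lemma \ref{numofredpts}, but bounds the sum directly: if no value occurs more than $M$ times, then $\sum x_i\ge (p+1)(p+1-M)/(2M)$. That direct bound gains roughly the factor $2$ you had to fight for.
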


\begin{proof}
Fix $P\in L$, and let $x_1,\ldots,x_{p+1}$ be the numbers of red points on the lines through $P$, excluding $P$ itself. Then
\[
\sum x_i\leq |R|.
\]
Let $M$ be the maximum number of the $x_i$ having the same value. Since each integer occurs at most $M$ times, the smallest possible sum is obtained by taking $M$ zeros, then $M$ ones, and so on. Hence
\[
|R|\geq\sum x_i\geq\frac{(p+1)(p+1-M)}{2M},
\]
and therefore, by Lemma~\ref{numofredpts},
\[
M\geq\frac{(p+1)^2}{2|R|+p+1}
\geq\frac{p+1}{2(r+|L|)+1}
>
\frac{p+1}{4(c\sqrt p+\lceil d\sqrt p\rceil)}.
\]
\end{proof}

\begin{lemma}
\label{lem:rstar}
There exists a multiset of points $ R^* $, such that the set of points whose multiplicity differs in $R^*$ and $R$ is exactly $L$, and every line intersects $R^*$ in $r \bmod p$ points. 
\end{lemma}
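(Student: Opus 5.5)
The plan is to apply Result~\ref{stab} with $q=p$. Once its hypotheses are verified, it gives a multiset $R^*$ in which every line meets $R^*$ in $r \bmod p$ points, and which differs from $R$ in exactly $\lceil \delta/(p+1)\rceil$ points. We then identify these points with the set $L$ of Lemma~\ref{lem:set L}.

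\textbf{Bound on $\delta$.} Since $\delta < d\sqrt p(p+1)$ and $8d^2<1$ (Remark~\ref{rm: constants}), we have $d<1/(2\sqrt2)$. Hence
\[
\delta < \tfrac12\sqrt p\,(p+1) \le (\lfloor\sqrt p\rfloor+1)(p+1-\lfloor\sqrt p\rfloor)
\]
for $p$ large enough, which Remark~\ref{rm: constants} guarantees. Here we use that under our Assumption every unbalanced line is regular, so $\delta$ is the number of regular unbalanced lines.

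\textbf{Property~\ref{propkmodp}.} Take a point $P$ on more than $p/2$ unbalanced lines. By Corollary~\ref{numofbadlinesthrpt}, a point outside $L$ lies on at most $|L|\le\lceil d\sqrt p\rceil<p/2$ unbalanced lines, so $P\in L$. Lemma~\ref{lotsofsamethrpt} gives at least $\frac{p+1}{4(c\sqrt p+\lceil d\sqrt p\rceil)}$ lines through $P$ carrying the same number of red points. Some of these lines may be balanced, so we only count unbalanced ones. By Lemma~\ref{lem:set L}, at most
\[
\frac{\delta}{p+1}+\frac{2\delta^2}{(p+1)^3}\le d\sqrt p+2d^2
\]
lines through $P$ are balanced. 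So we need
\[
\frac{p+1}{4(c\sqrt p+d\sqrt p+1)} - d\sqrt p - 2d^2 > 2d\sqrt p+5 .
\]
Multiplying out, this reduces to roughly $p > 12d(c+d)p + O(\sqrt p)$. This is the delicate point: the coefficient $8$ in the hypothesis must absorb this. We therefore expect to sharpen the count, for instance by bounding $|R|$ more carefully (using only $r+|L|$ in Lemma~\ref{lotsofsamethrpt} via $2(r+|L|)+1$ rather than the factor $4$), or by counting only values on unbalanced lines. The target is an inequality of the form $8d(c+d)+\frac{20c+28d}{\sqrt p}+\frac{19}{p}<1$, where the lower-order terms come from the ceilings and the constant $5$. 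This verification is the main obstacle; everything else is bookkeeping. Once the count works, the common number of red points gives the required residue $m$.

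\textbf{Identifying the changed points with $L$.} Result~\ref{stab} now yields $R^*$ differing from $R$ in a set $L'$ of size $\lceil\delta/(p+1)\rceil=|L|$. Every line avoiding $L'$ meets $R$ in $r \bmod p$ points and is therefore balanced (its line is regular otherwise). Hence $L'$ blocks all unbalanced lines.

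Conversely, take $P\in L$. It lies on at least $p+1-d\sqrt p-2d^2$ unbalanced lines. If $P\notin L'$, each of these lines would contain a point of $L'$, and distinct lines through $P$ meet $L'$ in distinct points. This forces $|L'|\ge p+1-d\sqrt p-2d^2 > d\sqrt p+1$, a contradiction. Thus $L\subseteq L'$, and since the two sets have equal size, $L=L'$. This proves the lemma.
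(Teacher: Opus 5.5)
Your overall plan is the paper's: bound $\delta$, verify Property~\ref{propkmodp}, apply Result~\ref{stab}, then show the exceptional set $L'$ equals $L$. Your bound on $\delta$ and your final identification $L'=L$ are correct and match the paper. The gap is the verification of Property~\ref{propkmodp}, which you explicitly leave open. With the estimates you actually use, you subtract up to $d\sqrt p+2d^2$ balanced lines from $M\ge\frac{p+1}{4(c\sqrt p+\lceil d\sqrt p\rceil)}$. That needs roughly $12d(c+d)<1$, and the hypothesis of Theorem~\ref{thm:mainprime} does not supply this. So, as written, the key step fails.

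The missing observation is that nothing has to be subtracted. The $M$ lines through $P$ given by Lemma~\ref{lotsofsamethrpt} all carry the same number of red points, so they are either all balanced or all unbalanced. Through $P\in L$ there are at most $A=\frac{\delta}{p+1}+\frac{2\delta^2}{(p+1)^3}<d\sqrt p+\frac14$ balanced lines, while $M>\frac{2\delta}{p+1}+5>A$. Hence all $M$ lines are unbalanced, hence regular, and they share a common residue $m$. The only inequality needed is then $M>\frac{2\delta}{p+1}+5$. Since $M>\frac{p+1}{4(c\sqrt p+d\sqrt p+1)}$ and $\frac{2\delta}{p+1}<2d\sqrt p$, it suffices that
\[
\frac{p+1}{4(c\sqrt p+d\sqrt p+1)}>2d\sqrt p+5
\iff
p>8d(c+d)p+(20c+28d)\sqrt p+19 .
\]
This is exactly the hypothesis $8d(c+d)+\frac{20c+28d}{\sqrt p}+\frac{19}{p}<1$. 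Your proposed sharpening would also rescue the subtraction argument: use the intermediate bound $M\ge\frac{p+1}{2(r+|L|)+1}$ from the proof of Lemma~\ref{lotsofsamethrpt}. The inequality $p+1\ge(2(c+d)\sqrt p+3)(3d\sqrt p+\frac{21}{4})$ then suffices, and it follows from the hypothesis. However, you did not carry out either route.
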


\begin{proof}
By Remark \ref{rm: constants}, we have $d < \frac{1}{2\sqrt 2}$. 
Hence
\[
\delta<\frac{\sqrt p(p+1)}2
<(\lfloor\sqrt p\rfloor+1)(p+1-\lfloor\sqrt p\rfloor),
\]
so the numerical bound in Result \ref{stab} is satisfied.
By Lemma \ref{lotsofsamethrpt}, through each point in $L$ there are at least
\[
\frac {p+1}{4(c\sqrt p + \lceil d \sqrt p \rceil)}
\]
lines containing the same number of red points. 
It remains to verify that Property \ref{propkmodp} holds.
A point outside $L$ lies on at most $|L|<p/2$ unbalanced lines, so it is enough to consider points of $L$. For $P\in L$, put
\[
A=\frac{\delta}{p+1}+\frac{2\delta^2}{(p+1)^3}.
\]
By Lemma \ref{lem:set L}, at most $A$ balanced lines pass through $P$.

\[
 \frac {p+1}{4(c\sqrt p + \lceil d \sqrt p \rceil)}
 >
 \frac {p+1}{4(c\sqrt p + d \sqrt p + 1)}
 >
 \frac{2\delta}{p+1} + 5.
\]

This inequality follows from the assumption of Theorem \ref{thm:mainprime}. Moreover, $8d^2<1$ gives
\[
A<\frac{\delta}{p+1}+\frac14<\frac p2.
\]
Thus the common number of red points above cannot be $r$, and hence Property \ref{propkmodp} holds.

Hence by Result \ref{stab} there is a multiset $R^*$ such that it intersects 
every line in $r \bmod p$ points and the set $S$ consists of those points that have different multiplicity in $R$ than in $R^*$, and $S$ has size $\lceil \frac{\delta}{p+1}\rceil$. 
Clearly, every unbalanced line must contain a point of $S$. 
If $P\in L\setminus S$, then at most $|S|$ unbalanced lines through $P$ can meet $S$, while Lemma \ref{lem:set L} gives at least $p+1-A>|S|$ such lines. Thus $L\subseteq S$, and since $|L|=|S|$, we have $L=S$.
\end{proof}

\begin{lemma}
Let $ P $ be a point of $ L $ and let the multiplicity \( m_P \) of 
$ P $ in $ R^* $ satisfy $ 0 \leq m_P < p $. Then $ m_P \leq r $ or $ p - |L| 
\leq m_P $. 
\end{lemma}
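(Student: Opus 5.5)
We compare, through $P$, the multiset $R^*$ with the actual red set $R$ along a suitable line. The key tool is Corollary~\ref{numofbadlinesthrpt}, applied to $R$ and to $R^*$ simultaneously.

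First, we find a balanced line through $P$ avoiding the rest of $L$. By Lemma~\ref{lem:set L}, at most $A=\frac{\delta}{p+1}+\frac{2\delta^2}{(p+1)^3}$ balanced lines pass through $P$. Whether at least one exists is unclear, so we instead use a line of a different type. By Lemma~\ref{lem:rstar}, $R^*$ meets every line in $r \bmod p$ points, and $R^*$ agrees with $R$ off $L$. Take a line $\ell$ through $P$ that contains no other point of $L$. Such lines exist, since at most $|L|-1$ lines through $P$ meet $L\setminus\{P\}$, and $|L|<p/2$.

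On $\ell$ we then have, modulo $p$,
\[
m_P+|(\ell\setminus\{P\})\cap R|\equiv r .
\]
Write $x=|(\ell\setminus\{P\})\cap R|$, an integer with $0\le x\le p$. Then $m_P\equiv r-x \pmod p$. If $x\le r$, then $m_P=r-x\le r$, since $0\le r-x<p$. If $x>r$, then $m_P=p+r-x$. The claim $m_P\ge p-|L|$ is then equivalent to $x\le r+|L|$. So we must show that some line $\ell$ through $P$ avoiding $L\setminus\{P\}$ has $x\le r+|L|$.

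Next, we choose $\ell$ with few red points. To do this we count red points through $P$, using Lemma~\ref{numofredpts}: $|R|\le (r+|L|)(p+1)$. The number of admissible lines through $P$ is at least $p+2-|L|$. Hence the average of $x$ over the admissible lines is at most
\[
\frac{(r+|L|)(p+1)}{p+2-|L|},
\]
which is only slightly above $r+|L|$. This is not quite enough on its own. We will refine the bound as follows. Counting through a red point outside $L$, as in the proof of Lemma~\ref{numofredpts}, gives a sharper estimate of $|R|$, roughly $r(p+1)+|L|p$. Alternatively, we can simply choose $\ell$ minimising $x$. Then $x\le \lfloor\text{average}\rfloor$, and the error term $(r+|L|)(|L|-1)/(p+2-|L|)<1$ holds by Remark~\ref{rm: constants}, since $20(c+d)<\sqrt p$ gives $(r+|L|)|L|<p/2$. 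So the minimum satisfies $x\le r+|L|$, which finishes the proof.

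The main obstacle is this last averaging step: making the error term strictly less than $1$, including the ceiling in $|L|\le\lceil d\sqrt p\rceil$. If the crude bound fails, we will use the exact count from Lemma~\ref{numofredpts}, namely $1+(r-1)(p+1-|L|)+|L|p$, which gives an average strictly below $r+|L|$.
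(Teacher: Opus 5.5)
Your argument is correct and is essentially the paper's. Taking the admissible line through $P$ with fewest red points and bounding it by the average over the at least $p+2-|L|$ such lines reduces to exactly the inequality $|L|^2+r|L|<p+r+2$ that the paper checks; the paper phrases it as a contradiction (if $r<m_P<p-|L|$, every admissible line would carry $p+r-m_P\geq r+|L|+1$ red points besides $P$, forcing $|R|>(r+|L|)(p+1)$).

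One small point: $(r+|L|)|L|<p/2$ does not follow from $20(c+d)<\sqrt p$ alone. You also need $8cd<1$ and $8d^2<1$ from Remark~\ref{rm: constants} (or directly the hypothesis of Theorem~\ref{thm:mainprime}). With those, your crude averaging bound already suffices and the fallback via the exact count is unnecessary.
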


\begin{proof}
Assume to the contrary that there exists a point $P \in L$ with 
multiplicity $r < m_P < p - |L|$ in $R^*$. Through the point $P$, there are at least $p + 2 - |L|$ lines that do not meet any point in $L \setminus \{P\}$. Each of these lines contains $r$ mod $p$ points from $R^*$ and contains exactly one point (namely $P$) at which the multiplicities in $R$ and $R^*$ may differ. Hence it contains exactly $p+r-m_P>|L|+r$ red points different from $P$. 
Therefore,
\[
|R| \geq (p + 2 - |L|)(|L| + r + 1).
\]

We now show that 
\[
(p + 2 - |L|)(|L| + r + 1) > (r + |L|)(p+1),
\]
which is a contradiction to Lemma \ref{numofredpts}. 
We rearrange the previous inequality to obtain:
\[
0 < p + r + 2 - |L|^2 - r|L|.
\]
 
The right-hand side is a quadratic expression in $|L|$, 
an upside down parabola.
Indeed, since $|L|\leq\lceil d\sqrt p\rceil<d\sqrt p+1$ and $r<c\sqrt p$,
\[
|L|^2+r|L|
<
d(c+d)p+(c+2d)\sqrt p+1
<p,
\]
where the last inequality follows from the assumption of Theorem \ref{thm:mainprime}.
This contradiction completes the proof since $0 < r$ and 
$0\leq |L|\leq\lceil d\sqrt p\rceil$.
\end{proof}

Now we change the multiplicities so that their absolute values are minimized and hence we will allow negative values as well. The multiplicity $m_P$ of a point of $ L $ in $ R^* $ will be $ -|L| \leq m_P \leq r $. The next lemma is straightforward.

\begin{lemma}
\label{largeindexedptswithmult}
The total multiplicity of the points in $ L $ in 
$ R^* $ lies in the interval $ [-|L|^2, r|L|] $. \qed
\end{lemma}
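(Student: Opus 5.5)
The claim is immediate from the previous lemma once the multiplicities are normalized. By Lemma~\ref{lem:rstar}, $R^*$ agrees with $R$ outside $L$; for each $P\in L$ we take $m_P$ as an element of $\F_p$ and choose the integer representative of smallest absolute value in the sense fixed just before the statement.

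The preceding lemma shows that the representative $m_P\in\{0,\dots,p-1\}$ satisfies either $m_P\le r$ or $m_P\ge p-|L|$. In the first case we keep the representative, giving $0\le m_P\le r$. In the second case we replace $m_P$ by $m_P-p$, which is congruent modulo $p$ and lies in $[-|L|,-1]$. So after this normalization every $P\in L$ has multiplicity in $[-|L|,r]$.

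Summing over the $|L|$ points of $L$, the total multiplicity lies between $-|L|\cdot|L|=-|L|^2$ and $r|L|$, which is the claim.

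The only thing to check is that the two ranges do not overlap, so the choice of representative is well defined. This needs $r<p-|L|$, and it holds because $r+|L|<c\sqrt p+d\sqrt p+1<p$ by Remark~\ref{rm: constants}. Even without this, either choice stays inside the interval, so there is no real obstacle here. Note that the shift does not affect the property that every line meets $R^*$ in $r \bmod p$ points, since multiplicities are only defined modulo $p$.
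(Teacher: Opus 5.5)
Your proof is correct and is essentially the paper's argument. The paper normalizes each $m_P$ with $P\in L$ to lie in $[-|L|,r]$ just before the lemma, using the preceding lemma, and then calls the summation to $[-|L|^2,r|L|]$ straightforward; your extra check that $r<p-|L|$, so the two ranges are disjoint, is a harmless detail the paper leaves implicit.
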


\begin{lemma}
\label{lem:r or p+r}
Each line intersects $ R^* $ in either $ r $ or $ p+r $ points. 
\end{lemma}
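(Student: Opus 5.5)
The intersection number of a line $\ell$ with $R^*$ is determined by $\ell$ itself. It equals the number of red points of $\ell$ outside $L$ plus the sum of the (normalized) multiplicities $m_P$, $P\in L\cap\ell$, where $-|L|\le m_P\le r$. This number is congruent to $r$ mod $p$. The plan is to show it lies strictly between $r-p$ and $r+2p$; then the only admissible values are $r$ and $r+p$.

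\emph{Lower bound.} The red points of $\ell$ outside $L$ contribute a nonnegative amount. The points of $L$ contribute at least $-|L|\cdot|L\cap\ell|\ge -|L|^2$. By Remark~\ref{rm: constants}, $|L|\le\lceil d\sqrt p\rceil<d\sqrt p+1$ with $8d^2<1$, so $|L|^2<p/8+2d\sqrt p+1<p$, using $20(c+d)<\sqrt p$. Hence the intersection number exceeds $-p$. Combined with $r\ge 0$ and the congruence, it is at least $r$, since $r-p<-|L|^2$ fails only if $r$ is large, and $r<c\sqrt p$ is small.

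\emph{Upper bound.} The red points of $\ell$ outside $L$ number at most $p+1$. The contribution from $L\cap\ell$ is at most $r|L\cap\ell|\le r|L|$. Thus the intersection number is at most $p+1+r|L|<p+1+(c\sqrt p)(d\sqrt p+1)=p+1+cdp+c\sqrt p$. Since $8cd<1$ and $20c<\sqrt p$, this is less than $2p+r$. So the value is at most $r+p$, because the next admissible value $r+2p$ is already too large.

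Combining the two bounds, the intersection number is $r$ or $r+p$. Neither step should be a real obstacle; the only care needed is the constant bookkeeping from the hypothesis $8d(c+d)+\frac{20c+28d}{\sqrt p}+\frac{19}{p}<1$, to make the two crude estimates $|L|^2<p$ and $p+1+r|L|<2p+r$ hold. I would verify the lower bound first, since it relies on $|L|^2<p-r$, which also uses $r<c\sqrt p$. If a sharper form were needed, I would instead use the more precise Lemma~\ref{largeindexedptswithmult} restricted to $L\cap\ell$.
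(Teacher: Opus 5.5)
Your proposal is correct and is essentially the paper's argument. Like the paper, you write $|\ell\cap R^*|=kp+r$, sandwich it between $-|L|^2$ and $p+1+r|L|$ using the normalized multiplicities $-|L|\le m_P\le r$, and use the constants to force $k\in\{0,1\}$. You also spell out the bookkeeping the paper leaves implicit. One point to tighten: the lower bound needs $|L|^2+r<p$, not merely $|L|^2<p$. You do flag this, and it follows from $|L|^2+r<d^2p+(2d+c)\sqrt p+1<p$ by the hypothesis of Theorem~\ref{thm:mainprime}.
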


\begin{proof}
Since every line intersects $R^*$ in $r \pmod{p}$ points, we have $|\ell \cap R^*| = kp + r$ for some integer $k$.
Also, $0 \leq |\ell \cap R| \leq p+1$ and hence by Lemma \ref{largeindexedptswithmult} $-|L|^2 \leq |\ell \cap R^*| \leq p+1 + r |L|$. It follows from Lemma \ref{lem:set L} and Remark \ref{rm: constants} that $k=0, 1$.
\end{proof}

\begin{lemma}
\label{lem: pt with no p+r-secant}
There exists a blue point outside $ R^* $ (i.e. has multiplicity $0$ in 
$ R^* $) and no $(p+r)$-secant of $ R^* $ passes through it. 
\end{lemma}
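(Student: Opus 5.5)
We will prove the statement by a counting argument. The first step bounds the number of $(p+r)$-secants of $R^*$. The second step shows that these secants, together with the points of $R^*$ and of $L$, cannot cover the whole plane.

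\emph{Counting the $(p+r)$-secants.} Let $N$ be the number of $(p+r)$-secants of $R^*$. By Lemma~\ref{lem:r or p+r}, every other line meets $R^*$ in exactly $r$ points, counted with multiplicity. Summing over all lines, every point is counted $p+1$ times, so
\[
(p+1)|R^*| = r(p^2+p+1) + Np .
\]
Here $|R^*|$ denotes the total multiplicity. Outside $L$ the multisets $R$ and $R^*$ agree, so Lemma~\ref{largeindexedptswithmult} and Lemma~\ref{numofredpts} give
\[
|R^*| \le |R| + r|L| \le (r+|L|)(p+1) + r|L| .
\]
Therefore
\[
Np \le (r+|L|)(p+1)^2 + r|L|(p+1) - r(p^2+p+1),
\]
which yields roughly $N \lesssim (|L| + r|L|/p)(p+1) + r$. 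Since $|L| \le \lceil d\sqrt p\rceil$ and $r < c\sqrt p$, we get $N = O((c+d)\sqrt p\, p)$. The exact bound is roughly $N \le |L|(p+2) + 2r + r|L|$.

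\emph{Covering argument.} Each $(p+r)$-secant covers $p+1$ points, so the union of these lines has at most $N(p+1)$ points. The points of positive multiplicity in $R^*$ are the points of $R \setminus L$ together with some points of $L$, so there are at most $|R| + |L| \le (r+|L|)(p+1) + |L|$ of them. We also discard $L$ itself. The union of the $(p+r)$-secants, the support of $R^*$ and $L$ therefore has size at most
\[
N(p+1) + (r+|L|)(p+1) + 2|L| .
\]
Plugging in the bound on $N$, this is of order $(2r + 2|L| + r|L|)(p+1) + O(p)$. That is at most about $(2c\sqrt p + 2d\sqrt p + cd\,p)p + O(p)$. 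Under the hypothesis $8d(c+d) + (20c+28d)/\sqrt p + 19/p < 1$ we have $cd < 1/8$ and $c+d < \sqrt p/20$, so this quantity is strictly less than $p^2+p+1$.

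Hence some point $Q$ lies outside all of these sets. Since $Q \notin L$, its multiplicity in $R^*$ equals its indicator in $R$. That multiplicity is $0$, so $Q$ is blue and outside $R^*$. By construction, no $(p+r)$-secant of $R^*$ passes through $Q$.

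\emph{Main obstacle.} The delicate step is keeping the constants tight enough that the final estimate falls below $p^2+p+1$, given the stated inequality. In particular the $r|L|(p+1)$ term must be bounded by $cd\,p^2$ plus lower-order terms. If the crude bound is too generous, the fallback is to count more carefully: each $(p+r)$-secant contains at least $p + r - r|L|$ red points. Any two such lines share at most one point, so their red points can be bounded directly against $|R|$. This gives $N(p - r|L|) - \binom{N}{2} \le |R|$, a sharper bound on $N$.
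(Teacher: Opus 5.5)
Your covering step fails because the bound on $N$ that double counting gives is far too weak. The identity $(p+1)|R^*|=r(p^2+p+1)+Np$ gives exactly
\[
N=r+\frac{p+1}{p}\,E,\qquad E:=|R^*|-r(p+1),
\]
and a priori you only know $E\le |L|(p+1)+r|L|$. So $N$ can only be bounded by about $r+|L|(p+2)+r|L|$, which is of order $|L|p$; you note this yourself, $N=O((c+d)p^{3/2})$. Then $N(p+1)$ is of order $|L|p^2$, and already for $|L|=1$ your upper bound for the covered set exceeds $p^2+p+1$. No refinement of the covering count can rescue an $N$ this large, since $p+1$ concurrent lines already cover the whole plane.

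When you ``plug in'' the bound, you in effect replace $N$ by $r+|L|+r|L|$, losing a factor $p$ on the $|L|$-term. So the estimate $(2c\sqrt p+2d\sqrt p+cd\,p)p$ is unjustified. This slack cannot be removed by global counting: the equality $|R^*|=(p+1)r$, i.e.\ $N=r$, is precisely what the paper deduces \emph{from} this lemma.

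The fallback in your last paragraph is the right idea, and it is the core of the paper's argument. However, the inequality $Na-\binom N2\le|R|$, with $a=p+r-r|L|>7p/8$, does not by itself bound $N$. Its left side is not monotone in $N$ and is negative when $N$ is of order $p^{3/2}$, so you must apply it in two steps:
\begin{enumerate}[(i)]
\item If $N>s:=\lfloor 7p/8\rfloor$, apply it to $s+1$ of the secants. This gives $|R|\ge (s+1)(s+2)/2$, contradicting $|R|<(p/20+1)(p+1)$.
\item Hence $N\le s<a$, so $Na/2<Na-\binom N2\le|R|$, which yields $N<\frac{16}{7}(r+|L|)\frac{p+1}{p}$.
\end{enumerate}
With this bound, the secants, $R$ and $L$ together contain fewer than $\frac{23}{140}p^2+O(p)$ points, and your covering conclusion goes through.

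The paper reaches the contradiction more directly and locally, never bounding $N$ globally. A balanced line $\ell$ is not a $(p+r)$-secant and has more than $19p/20$ blue points outside $L$. If each of these lay on a $(p+r)$-secant, you would get more than $7p/8$ distinct secants, each carrying more than $7p/8$ red points. This contradicts Lemma~\ref{numofredpts}.
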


\begin{proof}
Let $\ell$ be a balanced line. Then $\ell$ contains at least 
$p+1-r-|L|>p-(c+d)\sqrt p$
blue points which are not in $R^*$. 
By Remark \ref{rm: constants}, this number is larger than $19p/20$. Moreover,
\[
|\ell\cap R^*|\leq r+r|L|<p+r,
\]
where $r|L|<c\sqrt p(d\sqrt p+1)<p$. Hence $\ell$ itself is not a $(p+r)$-secant of $R^*$.

Assume to the contrary that each of them is contained in a $(p+r)$-secant of $R^*$. 
These secants are distinct, since a line different from $\ell$ meets $\ell$ in only one point.

Since $m_P\leq r$ for every $P\in L$, a $(p+r)$-secant of $R^*$ contains at least
\[
p+r-r|L|=p-r(|L|-1)>p-cdp=p(1-cd)
\]
red points.
By Remark \ref{rm: constants}, $cd<1/8$ and hence
$p(1-cd)>7p/8$.

Counting the number of red points on the $(p+r)$-secants through the blue points on $\ell$ which are not in $R^*$, 
put $s=\lfloor7p/8\rfloor$. We may choose $s+1$ of these secants, each containing at least $s+1$ red points. The $j$-th secant contributes at least $s+2-j$ new red points, and hence
\[
|R|\geq\frac{(s+1)(s+2)}2.
\]

By Lemma \ref{numofredpts}, the number of red points is less than
\[
((c+d)\sqrt p+1)(p+1)<(p/20+1)(p+1).
\]
This yields a contradiction when $p>19$ and hence the lemma follows.
\end{proof}

\noindent {\bf Proof of Theorem \ref{thm:mainprime}.} Consider the points colored red; they form the set $R$.

Put $m=\lceil d\sqrt p\rceil$. By Remark \ref{rm: constants}, $m+1<\sqrt p$, and hence
\[
\binom{m+1}{2}<p+1.
\]
Since $m+1-d\sqrt p\geq1$, Lemma \ref{lem:exceptional}, applied with $k=m+1$, shows that if $r=0$, then $|R|\leq m$, while if $r=1$ and $\ell$ is a completely red line, then $|R\setminus\ell|\leq m$. Recoloring these points blue proves the theorem in these cases. Since $r<c\sqrt p<p$, we may therefore assume that $r>0$ and that every unbalanced line is regular.

We construct the multiset $R^*$ using Lemma \ref{lem:rstar}.

We will show that $R^*$ is the weighted sum of $r$ lines. 
Counting the points of $R^*$ with 
multiplicity on the $p+1$ lines through the point provided by Lemma \ref{lem: pt with no p+r-secant} and using Lemma \ref{lem:r or p+r}, we obtain $|R^*|=(p+1)r$. Now counting the points of $R^*$ with multiplicity on the lines through an arbitrary point $P$, we get that the multiplicity of $P$ in $R^*$ must be the number of $(p+r)$-secants through $P$; hence $R^*$ is the weighted sum of the not necessarily distinct $r$ lines $e_1,e_2,\dots,e_r$.

Note that the multiplicities of the points in $R$ and $R^*$ differ only at points of $L$. This means that any point whose color was flipped must be in $L$.
Also, points that lie on at least two of the lines $e_i$ must be in $L$ since their multiplicity in $R^*$ is larger than $1$. Since $|L|<p+1$, we see that the lines $e_i$ are all different and the intersection point of any two of these is in $L$.

Now consider a point of $L$ which lies on at most one of the lines $e_i$. Since $R^*$ is the weighted sum of lines, this means that its multiplicity is either $0$ or $1$ in $R^*$. And so its multiplicity in $R$ is $1$ in the first case and $0$ in the second; hence it is a point whose color was flipped. Therefore, $L$ is exactly the union of the recolored points and the intersection points of the lines $e_i$. The result follows from Lemma \ref{lem:set L}.

\section*{Acknowledgement}
Project no.\ 153080 has been implemented with the support provided by the
Ministry of Culture and Innovation of Hungary from the National Research,
Development and Innovation Fund, financed under the ADVANCED\_25 funding scheme.

The third author was partially supported by the Slovenian Research Agency,
research project N1-0429 and by Dynasnet European Research Council Synergy project ERC-2018-SYG 810115.

The research presented in this paper was carried out by the authors several
years ago, without the use of any AI tool; essentially all of the mathematical
content is their own work. The present version was proofread with the help of
ChatGPT. During this proofreading, ChatGPT pointed out that the original statement of Lemma~\ref{lem: 2 lines} fails for $q=4$; the lemma and its proof were corrected accordingly. The details of the extension to nearby cases in Section~\ref{sec:orig} were also adjusted at this stage.

\end{document}